\pdfoutput=1
\RequirePackage{ifpdf}
\ifpdf % We~are running pdfTeX in pdf mode
\documentclass[pdftex]{sigma}
\else
\documentclass{sigma}
\fi

\numberwithin{equation}{section}

\newtheorem{Theorem}{Theorem}[section]
\newtheorem*{Theorem*}{Theorem}
\newtheorem*{Claim*}{Claim}
\newtheorem{Corollary}[Theorem]{Corollary}
\newtheorem{Lemma}[Theorem]{Lemma}
\newtheorem{Proposition}[Theorem]{Proposition}
 { \theoremstyle{definition}
\newtheorem{Definition}[Theorem]{Definition}

\newtheorem{Example}[Theorem]{Example}
\newtheorem{Remark}[Theorem]{Remark} }

\newtheorem{thmx}{Theorem} %non contato
 % "letter-numbered" theorems
\newtheorem{lemmaA}{Lemma A\ignorespaces}
\newtheorem{lemmaB}{Lemma B\ignorespaces}

\def\C{\mathbb{C}}
\def\R{\mathbb{R}}
\def\CP{\mathbb{CP}}
\def\Z{\mathbb{Z}}

\def\H{\mathbb{H}}

\def\S{\mathcal{S}}

\def\G{\mathfrak{G}}

\def\CP{\mathbb{C}P}

\def\Q{\mathbb{Q}}

\def\F{\mathcal{F}}

\def\S{\mathcal{S}}

\def\PP2{\mathcal{P}^2}
\def\G{\mathrm{G}}
\def\G1{\mathrm{G}^+_1}
\def\NR1{\mathrm{N}^+_1}
%%%%%%%%%%%%%%%%%%%%%%%%%%%%

\begin{document}
\allowdisplaybreaks

\renewcommand{\thefootnote}{}

\newcommand{\arXivNumber}{2307.04763}

\renewcommand{\PaperNumber}{101}

\FirstPageHeading

\ShortArticleName{On the Total CR Twist of Transversal Curves in the 3-Sphere}

\ArticleName{On the Total CR Twist of Transversal Curves\\ in the 3-Sphere\footnote{This paper is a~contribution to the Special Issue on Symmetry, Invariants, and their Applications in honor of Peter J.~Olver. The~full collection is available at \href{https://www.emis.de/journals/SIGMA/Olver.html}{https://www.emis.de/journals/SIGMA/Olver.html}}}

\Author{Emilio MUSSO~$^{\rm a}$ and Lorenzo NICOLODI~$^{\rm b}$}

\AuthorNameForHeading{E.~Musso and L.~Nicolodi}

\Address{$^{\rm a)}$~Dipartimento di Scienze Matematiche, Politecnico di Torino,\\
\hphantom{$^{\rm a)}$}~Corso Duca degli Abruzzi 24, I-10129 Torino, Italy}
\EmailD{\href{mailto:emilio.musso@polito.it}{emilio.musso@polito.it}}

\Address{$^{\rm b)}$~Dipartimento di Scienze Matematiche, Fisiche e Informatiche, Universit\`a di Parma,\\
\hphantom{$^{\rm b)}$}~Parco Area delle Scienze 53/A, I-43124 Parma, Italy}
\EmailD{\href{mailto:lorenzo.nicolodi@unipr.it}{lorenzo.nicolodi@unipr.it}}

\ArticleDates{Received July 11, 2023, in final form November 26, 2023; Published online December 21, 2023}

\Abstract{We investigate the total CR twist functional on transversal curves in the standard CR 3-sphere $\mathrm S^3 \subset \mathbb C^2$. The question of the integration by quadratures of the critical curves and the problem of existence and properties of closed critical curves are addressed. A~procedure for the explicit integration of general critical curves is provided and a characterization of closed curves within a specific class of general critical curves is given. Experimental evidence of the existence of infinite countably many closed critical curves is provided.}

\Keywords{CR 3-sphere; transversal curves; CR invariants; total CR twist; Griffiths' formalism; Lax formulation of E-L equations; integration by quadratures; closed critical curves}

\Classification{53C50; 53C42; 53A10}

\begin{flushright}
\begin{minipage}{60mm}
\it Dedicated to Peter Olver\\ on the occasion of his 70th birthday
 \end{minipage}
\end{flushright}

\renewcommand{\thefootnote}{\arabic{footnote}}
\setcounter{footnote}{0}

\section{Introduction}%\label{Intro}

The present paper finds its inspiration and theoretical framework in
the subjects of moving frames, differential invariants, and invariant variational problems,
three of the many research topics to which Peter Olver has made
lasting contributions.
Among the many publications of
Peter Olver dedicated to these subjects, we like to mention
\cite{FelsOlver1, FelsOlver2,Olver-book1,Olver-book2}
as the ones that most influenced
our research activity.

More specifically, in this paper we further develop some of the themes
%questions
considered in~\cite{M,MNS-Kharkiv,MS}
concerning the Cauchy--Riemann (CR) geometry of transversal and Legendrian curves in the 3-sphere.
In three dimensions, a CR structure on a manifold is defined by an oriented contact distribution
equipped with a complex structure.
While the automorphism group of a contact manifold is infinite dimensional,
that of a CR threefold is finite dimensional and of dimension less or equal than eight~\cite{Cartan1932-2,Cartan1932,ChMo1974}.
The maximally symmetric CR threefold is the 3-sphere ${\mathrm S}^3$, realized as a real hyperquadric of $\mathbb{CP}^2$
acted upon transitively by the Lie group $G \cong \mathrm{SU}(2,1)$.
This homogeneous model allows the application of
differential-geometric techniques to the study of transversal and Legendrian curves in ${\mathrm S}^3$.
Since the seminal work of Bennequin~\cite{Benn1983}, the study of the topological properties of transversal and Legendrian
knots in 3-dimensional contact manifolds has been an important area of research
(see, for instance,~\cite{Eliash1993,Et3,Etn1999, Et2,EtHo,FuTa1997} and the literature therein).
Another reason of interest for 3-dimensional contact geometry
comes from its applications to neuroscience.
In fact, as shown by Hoffman~\cite{Ho}, the visual cortex can be modeled as a bundle equipped with a contact structure.
For more details, the interested reader is referred to the monograph
\cite[Section~5]{Pe}.
Recently, the CR geometry of Legendrian and transversal curves in ${\mathrm S}^3$ has also found
interesting applications in the framework of integrable system~\cite{CI}.

Let us begin by recalling some results from the CR geometry
of transversal curves in $\mathrm{S}^3$.
According to~\cite{MNS-Kharkiv}, away from CR inflection points,
a curve transversal to the contact distribution of $\mathrm{S}^3$
can be parametrized by a natural \textit{pseudoconformal} parameter~$s$ and in this parametrization it is
uniquely determined, up to CR automorphisms,
by two local CR invariants: the CR \textit{bending}~$\kappa$ and the CR \textit{twist} $\tau$.
This was achieved by developing the method of moving frames and by constructing a canonical
frame field along \textit{generic}\footnote{I.e., with no CR inflection points.} transversal curves.
Moreover, for closed transversal curves, we defined three discrete global invariants, namely,
the wave number, the CR spin, and the CR turning number.
Next, we investigated the total strain functional, defined
by integrating the strain element~${\rm d}s$.
We proved that the corresponding critical curves have
constant bending and twist, and hence
arise as orbits of 1-parameter groups of CR automorphisms. Finally,
%it is shown that
closed critical curves
are shown to be transversal positive torus knots with maximal Bennequin number.

In the present paper, we consider
the CR invariant variational problem for generic transversal curves in $\mathrm{S}^3$ defined by the
\textit{total CR twist functional},
\[
\mathcal{W}(\gamma) = \int_\gamma \tau \,{\rm d}s.
\]
Our purpose is to address both the question of the explicit integration of critical curves and
the problem of existence and properties of closed critical curves of $\mathcal{W}$.

We now give a brief outline of the content and results of this paper.
In Section~\ref{s:preliminaries}, we shortly describe the standard CR
structure of the 3-sphere $\mathrm{S}^3$,
viewed as a homogeneous space of the
group $G$, and collect some preliminary material. We then recall the basic facts
about the CR geometry of transversal curves in $\mathrm{S}^3$ as
developed in~\cite{MNS-Kharkiv} (see the description above).
Moreover, besides the already mentioned discrete global invariants for a closed transversal curve,
we introduce a fourth global invariant, the \textit{trace} of the curve with respect to a spacelike line.

In Section~\ref{s:total-twist},
we apply the method of moving frames
%we use the canonical moving frame for generic transversal curves
and the Griffiths approach to the calculus of variations~\cite{Gr,Hsu, GM} to compute the Euler--Lagrange equations
of the total CR twist functional.
We construct the momentum space of the corresponding variational problem
and find a Lax pair formulation for the Euler--Lagrange equations satisfied by the critical curves.
This is the content of Theorem~\ref{thmA}, the first main result of the paper, whose proof occupies
the whole Section~\ref{s:total-twist}.
As a consequence of Theorem~\ref{thmA},
to each critical curve we associate a \textit{momentum} operator, which is
a fixed element of the $G$-module $\mathfrak{h}$
of traceless selfadjoint endomorphisms of~$\mathbb{C}^{2,1}$.
From the conservation of the momentum along a critical curve,
we derive two conservation laws, involving two
real parameters $c_1$ and $c_2$.
The pair $\mathbf{c}=(c_1,c_2)$ is referred to as the \textit{modulus} of the critical curve.

In Section~\ref{s:twist},
we introduce the phase type of the modulus of a critical curve. We then define the phase curve
of a
given modulus and the associated notion of signature of a critical curve with that given modulus.
For a generic modulus $\mathbf{c}$, the phase type of $\mathbf{c}$ refers to the properties of the roots
of the quintic polynomial in principal form given by
\[{\rm P}_{\mathbf{c}}(x)=x^5+\frac{3}{2}c_2x^2+27c_1x-\frac{27}{2}c_1^2 .\]
The phase curve of
the modulus $\mathbf{c}$ is the real algebraic curve
defined by the equation ${y^2={\rm P}_{\mathbf{c}}(x)}$.
The signature of a critical curve $\gamma$ with modulus $\mathbf{c}$ and nonconstant twist
provides a parametrization of the connected components of the
phase curve of $\mathbf{c}$ by the twist of $\gamma$. Importantly, the periodicity of the twist of $\gamma$
amounts to the compactness of the image of the signature of $\gamma$.
This will play a role in Sections~\ref{s:integrability}
and~\ref{s:heuristic}, where the closedness question for critical curves is addressed.
Using the Klein formulae for the icosahedral solutions of the quintic~\cite{K,Na,Tr},
the roots of~${\rm P}_{\mathbf{c}}$
can be evaluated in terms of hypergeometric functions. As a byproduct, we show that the twist
and the bending of a
critical curve can be obtained by inverting incomplete hyperelliptic integrals of the first kind.
We further specialize our analysis by introducing the orbit type of the modulus $\mathbf{c}$ of a
critical curve $\gamma$. The orbit type of $\mathbf{c}$ refers to the spectral properties of the momentum associated to $\gamma$.
Depending on the phase type, the number of connected components of the phase curves,
and the orbit type, the critical curves are then divided into twelve classes.
The critical curves of only three of these classes have periodic twist.

In Section~\ref{s:integrability},
we show that a \textit{general} critical curve (cf.\ Definition~\ref{def:general})
can be integrated by quadratures using the momentum of the curve. This is the content
of Theorem~\ref{thmss:integrability1}, the second main result of the paper.
Theorem~\ref{thmss:integrability1} is then specialized
to one of the twelve classes of critical curves, the class characterized by the compactness of the
connected component of the phase
curve and by the existence of three distinct real
{eigenvalues} of the momentum.
Theorem~\ref{Theorem1ss:integrability2}, the third main result,
shows that the critical curves of this specific class can be explicitly written by inverting hyperelliptic integrals
of the first and third kind.
We then examine the closure conditions and prove that a critical curve in this class is closed if and only if certain
complete hyperelliptic integrals depending on the {modulus} of the curve are rational.
Finally, the relations between these rational numbers and the global CR invariants mentioned above
are discussed.

In the last section, Section~\ref{s:heuristic}, we develop
convincing heuristic and numerical arguments to support the claim that there exist
infinite countably many distinct congruence classes of closed critical curves.
These curves are uniquely determined by the four discrete geometric invariants: the wave number,
the CR spin, the CR turning number, and the trace
with respect to the spacelike $\lambda_1$-eigenspace of the momentum.
Using numerical tools, we construct and illustrate explicit examples of approximately closed critical curves.

\section{Preliminaries}\label{s:preliminaries}

\subsection{The standard CR structure on the 3-sphere}%\label{S1.2}

Let $\C^{2,1}$ denote $\C^3$ with the indefinite Hermitian scalar product of signature $(2,1)$ given by
\begin{equation}\label{hp}
\langle \mathbf{z},\mathbf{w} \rangle
= {^t \overline{\mathbf{z}}} \mathbf{h} \mathbf{w},
\qquad {\mathbf h}= (h_{ij})=
\begin{pmatrix}
\hphantom{-}0& 0& {\rm i}\\
\hphantom{-}0& 1&0\\
-{\rm i}&0&0
\end{pmatrix}.
 \end{equation}
Following common terminology in pseudo-Riemannian geometry, a nonzero vector ${\bf z}\in\C^{2,1}$
is \emph{spacelike}, \emph{timelike} or \emph{lightlike}, depending on whether $\langle {\bf z}, {\bf z}\rangle$
is positive, negative or zero.
By $\mathcal{N}$ we denote the \textit{nullcone}, i.e., the set of all lightlike vectors.

Let $\mathcal{S}= \mathbb{P}(\mathcal{N})$ be the real hypersurface in $\CP^2$ defined by
\begin{equation*}
\mathcal S =\big\{[{\bf z}] \in \CP^2 \mid \langle \mathbf{z},\mathbf{z} \rangle
= {\rm i}(\overline{z}_1z_3-\overline{z}_3z_1)+\overline{z}_2z_2=0\big\}.
 \end{equation*}
%where ${\bf z}={^t\!(}z_1,z_2,z_3)$ are homogeneous coordinates in $\CP^2$.
The restriction of the affine chart
\begin{equation*}
 s\colon \ \C^2 \ni (z_1,z_2) \longmapsto
 \left[{^t\!\bigg(}\frac{1+z_1}{2},{\rm i}\frac{z_2}{\sqrt{2}},{\rm i}\frac{1-z_1}{2}\bigg)\right]\in \mathcal S\subset \CP^2
 \end{equation*}
to the unit sphere ${\mathrm S}^3$ of $\C^2$ defines a smooth diffeomorphism
%by means of which we identify
between ${\mathrm S}^3$ and $\mathcal{S}$.
% a strongly pseudoconvex hypersurface of $\CP^2$.
For each $p=[{\bf z}]\in \mathcal{S}$, the differential $(1,0)$-form
\begin{equation*}%\label{cf}
 \tilde\zeta\big|_p =
 - \frac{{\rm i} \langle \bf{z},d{\bf z}\rangle}{\overline{{\bf z}} \, {^t\!\bf z}} \bigg|_p \in \Omega^{1,0}\big(\CP^2\big)\big|_p
 \end{equation*}
is well defined. In addition, the null space of the imaginary part of $\tilde\zeta|_p$ is
${{T}(\mathcal{S})}|_p$, namely the tangent space of $\mathcal{S}$ at $p$.
Thus, the restriction of $\tilde\zeta$ to ${{T}(\mathcal{S})}$ is a real-valued 1-form $\zeta\in\Omega^1(\mathcal{S})$.
Since the pullback of $\zeta$ by the diffeomorphism $s\colon \mathrm{S}^3\to\mathcal{S}$ is the standard contact
form ${\rm i}\overline{{\bf z}}\cdot d{{\bf z}}|_{\mathrm{S}^3}$ of $\mathrm{S}^3$, then $\zeta$ is a contact
form whose contact distribution $\mathcal{D}$ is, by construction, a complex subbundle
of ${{T}\big(\CP^2\big)\big|_{\mathcal{S}}}$. Therefore, $\mathcal{D}$ inherits
from ${{T}\big(\CP^2\big)\big|_{\mathcal{S}}}$ a complex structure $J$. This defines a CR structure on $\mathcal{S}$.

Let $\mathbf{e}_1$, $\mathbf{e}_2$, $\mathbf{e}_3$ denote the standard basis of $\mathbb C^3$.
Consider ${P}_{0}=[\mathbf{e}_1]
\in {\mathcal S}$ and ${P}_{\infty}=[\mathbf{e}_3]
\in {\mathcal S}$ as the origin
and the point at infinity of $\S$. Then, $\dot{\S}:= \S\setminus\{P_\infty\}$ can be identified with
Euclidean 3-space with its standard contact structure
${\rm d}z - y{\rm d}x + x{\rm d}y$
by means of the {\em Heisenberg projection}\footnote{This map is the analogue of the stereographic projection in M\"obius (conformal) geometry.}
\[
 \pi_H\colon \ \dot{\S}\ni [{\bf z}]\longmapsto {^t\!\left(\operatorname{Re}(z_2/z_1),\operatorname{Im}(z_2/z_1),\operatorname{Re}(z_3/z_1)\right)}\in \R^3.
\]
The inverse of the Heisenberg projection is the {\em Heisenberg chart}
\[
 j_H \colon \ \R^3 \ni {^t\!(}x, y, z) \longmapsto \bigg[{^t\!\big(}1,x+{\rm i}y,z+\frac{{\rm i}}{2}\big(x^2+y^2\big)\big)\bigg]\in\dot{\S}.
\]
The Heisenberg chart can be lifted to a map
 whose image is a 3-dimensional closed subgroup~$\H^3$ of ${G}$,
 which is isomorphic to the 3-dimensional {\em Heisenberg group}~\cite{MNS-Kharkiv}.

Let $G$ be the special pseudo-unitary group of~\eqref{hp},
i.e., the 8-dimensional Lie group
of unimodular complex $3 \times 3$ matrices
preserving~\eqref{hp},
\begin{equation*}%\label{def-Lie-group}
G = \big\{ A \in \mathrm{SL}(3,\mathbb C) \mid {^t\!\bar{A}}\mathbf h A = \mathbf h\big\} \cong \mathrm{SU}(2,1),
\end{equation*}
and let $\mathfrak g$ denote the Lie algebra of $G$,
\begin{equation*}%\label{def-Lie-algebra}
\mathfrak g = \big\{ X \in \mathfrak{sl}(3,\mathbb C) \mid {^t\!\bar{X}}\mathbf h + \mathbf h X = 0\big\}.
\end{equation*}
The Maurer--Cartan form of the group $G$ takes the form
\begin{equation*}
%A^{-1}dA=
\vartheta = A^{-1}{\rm d}A =\begin{pmatrix}
\alpha_1^1 +{\rm i} \beta_1^1 & -{\rm i} \alpha_3^2 - \beta_3^2 & \alpha_3^1\\
\alpha_1^2 +{\rm i} \beta_1^2 &-2{\rm i} \beta_1^1 & \alpha_3^2 +{\rm i}\beta_3^2 \\
 \alpha_1^3 &{\rm i} \alpha_1^2 + \beta_1^2& -\alpha_1^1 +{\rm i} \beta_1^1\\
\end{pmatrix},
\end{equation*}
where the 1-forms
$
 \big(\alpha_1^1 , \beta_1^1 , \alpha_1^2 , \beta_1^2 ,\alpha_1^3 , \alpha_3^2 , \beta_3^2 , \alpha_3^1\big)
 $
form a basis of the dual Lie algebra ${\mathfrak g}^*$.
The center of $G$ is $Z = \big\{\varpi I_3\mid \varpi \in \mathbb C, \,\varpi^3 =1\big\}\cong \mathbb Z_3$, where
$I_3$ denotes the $3\times 3$ identity matrix. Let~$[G]$ denote the quotient Lie
group $G/Z$ and for $A\in G$ let $[A]$ denote its equivalence class in $[G]$.
%will be denoted by $[A]$.
Thus $[A] = [B]$ if and only if $B = \varpi A$, for some cube root
of unity $\varpi$.
For any $A\in G$, the column vectors $(A_1, A_2, A_3)$ of $A$ form a basis of $\C^{2,1}$
satisfying $\langle A_i, A_j \rangle = h_{ij}$ and $\mathrm{det}(A_1, A_2, A_3) =1$. Such a basis is
referred to as a
{\em lightcone basis}.
On the other hand, a~basis $(\mathbf{u}_1, \mathbf{u}_2, \mathbf{u}_3)$ of $\C^{2,1}$,
such that $\mathrm{det}(\mathbf{u}_1, \mathbf{u}_2, \mathbf{u}_3) =1$ and
%a basis $(U_1, U_2, U_3)$, such that
%$\langle U_i, U_j \rangle = (-1)^{i^2}\delta_{ij}$
%and $\mathrm{det}(U_1, U_2, U_3) =1$,
$\langle \mathbf{u}_i, \mathbf{u}_j \rangle = \delta_{ij}\epsilon_j$, where
$\epsilon_1 = -1$, $\epsilon_2 = \epsilon_3 = 1$,
%$\epsilon_1 = \epsilon_2 =1$, $\epsilon_3 = -1$,
%and $\mathrm{det}(U_1, U_2, U_3) =1$
is referred to as a unimodular {\em pseudo-unitary basis}.

The group $G$ acts transitively and almost effectively on the left of
$\S$ by
\begin{equation*}
 A[{\bf z}] = [A {\bf z}], \qquad \forall \, A\in {G}, \  \forall \, [{\bf z}] \in\S.
 \end{equation*}
This action descends to an effective action of $[{G}]={ G}/{Z}$ on~$\S$.
It is a classical result of E.~Cartan~\cite{Cartan1932-2,Cartan1932,ChMo1974} that $[{G}]$
is the group of CR automorphisms
of $\S$.

If we choose $[\mathbf{e}_1]=\big[ {^t\!(}1,0,0)\big]\in \mathcal{S}$ as an origin of $\mathcal{S}$,
the natural projection
\begin{equation*}
 \pi_\S\colon \ {G}\ni A \mapsto A[\mathbf{e}_1] =[A_1]\in\S
 \end{equation*}
makes ${G}$ into a (trivial) principal fiber bundle with structure group
\begin{equation*}
 {G}_0=\left\{ A\in {G} \mid A[\mathbf{e}_1]=[\mathbf{e}_1] \right\}.
 \end{equation*}
The elements of ${G}_0$ consist of all $3\times3$ unimodular matrices of the form
\begin{equation}\label{gauge}
{X}(\rho,\theta,v,r)=
\begin{pmatrix}
\rho {\rm e}^{{\rm i}\theta} & -{\rm i}\rho {\rm e}^{-{\rm i}\theta}\bar {v} & {\rm e}^{{\rm i}\theta}(r-\frac{{\rm i}}{2}\rho |v|^2) \\
 0 & {\rm e}^{-2{\rm i}\theta} & v \\
 0 & 0 & \rho^{-1}{\rm e}^{{\rm i}\theta}, \\
 \end{pmatrix},
 \end{equation}
where $v\in \C$, $r \in \R$, $0\leq\theta <2\pi$, and $\rho>0$.

\begin{Remark}
The left-invariant 1-forms $\alpha_1^2$, $\beta_1^2$, $\alpha_1^3$ are linearly independent and generate
the semi-basic 1-forms for the projection $\pi_{\mathcal S}\colon G \to \mathcal S$. So, if $s\colon U\subseteq\S\to{G}$ is
a local cross section of $\pi_\S$,
then $\big(s^*\alpha_1^3 , s^*\alpha_1^2, s^* \beta_1^2 \big)$
defines a coframe on $U$ and $s^*\alpha_1^3 $ is a positive contact form.\end{Remark}

\subsection{Transversal curves}

\begin{Definition}
Let $\gamma \colon J \to\S$ be a smooth immersed curve. We say that $\gamma$ is \emph{transversal} (to the contact
distribution $\mathcal{D})$ if the tangent vector ${\gamma'}(t) \not\in \mathcal{D}|_{\gamma(t)}$, for every $t \in J$.
The parametrization~$\gamma$ is said to be
\emph{positive} if $\zeta({\gamma'}(t) ) > 0$, for every $t$ and for every positive contact form compatible
with the CR structure.
From now on, we assume that the parametrization of a transversal curve is positive.
\end{Definition}

\begin{Definition}
Let $\gamma\colon J \to\S$ be a smooth curve.
A \emph{lift} of $\gamma$ is a map $\Gamma\colon J \to \mathcal{N}$ into the nullcone $\mathcal{N}\subset\C^{2,1}$,
such that $\gamma(t) = [\Gamma(t)]$, for every $t \in J$.
 \end{Definition}

If $\Gamma$ is a lift, any other lift is given by $r\Gamma$, where $r$ is a smooth complex-valued function,
such that $r(t)\neq 0$, for every $t \in J$.
 From the definition of the contact distribution, we have the following.

 \begin{Proposition}
 A parametrized curve $\gamma \colon J \to\S$ is transversal
 and positively oriented if and only if
 $- {\rm i}\langle \Gamma,\Gamma'\rangle|_t > 0$, for every $t\in J$ and for every lift $\Gamma$.
 %and for all lift $\Gamma$.
 \end{Proposition}

\begin{Definition}
A \emph{frame field along $\gamma \colon J \to \S$} is a smooth map $A \colon J \to {G}$
such that ${\pi_\S \circ A = \gamma}$. Since the fibration $\pi_{\mathcal{S}}$
is trivial, there exist frame fields along every transversal curve.
If $A=(A_1,A_2,A_3)$ is a frame field along $\gamma$, $A_1$ is a lift of $\gamma$.
%the first column vector $A_1$ of $A$ is a lift of $\gamma$.
\end{Definition}

Let $A$ be a frame field along $\gamma$. Then
\[
A^{-1}A'=
\begin{pmatrix}
a_1^1 +{\rm i} b_1^1 & -{\rm i} a_3^2 - b_3^2 & a_3^1\\
a_1^2 +{\rm i} b_1^2 &-2{\rm i} b_1^1 & a_3^2 +{\rm i}b_3^2 \\
 a_1^3 &{\rm i} a_1^2 + b_1^2& -a_1^1 +{\rm i} b_1^1
 \end{pmatrix},
\]
where $a_1^3$ is a strictly positive real-valued function. Any other frame field along $\gamma$
is given by $\tilde A = AX(\rho,\theta,v,r)$, where
$\rho$ ($\rho>0$), $\theta$, $r \colon J \to \mathbb R$, $v=p+{\rm i}q \colon J\to \mathbb C$ are smooth functions
and $X(\rho,\theta,v,r)\colon J \to {G}_0$ is as in~\eqref{gauge}.
If we let
\[
\tilde A^{-1}\tilde A'=
\begin{pmatrix}
\tilde a_1^1 +{\rm i} \tilde b_1^1 & -{\rm i} \tilde a_3^2 - \tilde b_3^2 & \tilde a_3^1\\
\tilde a_1^2 +{\rm i} \tilde b_1^2 &-2{\rm i} \tilde b_1^1 & \tilde a_3^2 +{\rm i} \tilde b_3^2 \\
\tilde a_1^3 &{\rm i} \tilde a_1^2 + \tilde b_1^2& -\tilde a_1^1 +{\rm i} \tilde b_1^1
\end{pmatrix},
\]
then
\begin{equation*}%\label{transf}
\tilde A^{-1}\tilde A'= X^{ -1}A^{-1} A'X + X^{ -1}X',
\end{equation*}
which implies
\[
%\begin{cases}
 \tilde a_1^3= \rho^2 a_1^3,\qquad
% \tilde a_1^2 +i \tilde b_1^2 ={\rm e}^{2i\theta} \rho({\rm e}^{i\theta} (a_1^2 +i b_1^2 )-\rho (p+iq) a_1^3).
 \tilde a_1^2 +{\rm i} \tilde b_1^2 = \rho {\rm e}^{3{\rm i}\theta} \big(a_1^2 +{\rm i} b_1^2 \big) - \rho^2 {\rm e}^{2{\rm i}\theta} (p+{\rm i}q) a_1^3.
% \end{cases}
 \]
From this it follows
that along any parametrized transversal curve there exists
a frame field $A$ for which $a_1^2 +{\rm i} b_1^2=0$.
%that is, $A^*\vartheta^2_1 =0$.
Such a frame field
%along $\gamma$
is said to be of \textit{first order}.
\begin{Definition}
Let $\Gamma$ be a lift of a transversal curve $\gamma\colon J \to \S$.
If
\[
 \det(\Gamma,\Gamma',\Gamma'')|_{t_0} = 0,
\]
for some $t_0\in J$, then $\gamma({t_0})$ is called a \emph{CR inflection point}.
The notion of CR inflection point is independent of the lift $\Gamma$.
A transversal curve with no CR inflection points is said to be \emph{generic}.
The notion of a CR inflection point is invariant under reparametrizations and
under the action of the group of CR automorphisms.
\end{Definition}

\begin{Remark}
If $A \colon J \to G$ is a frame field along a transversal curve $\gamma$, then $\gamma({t_0})$ is a~CR inflection point
if $ \det(A_1,A_1',A_1'')\big|_{t_0} = 0$. A transversal curve all of whose points are CR inflection points is
called a \textit{chain}. The notion of chain on a CR manifold goes back to Cartan~\cite{Cartan1932-2, Cartan1932}
(see also~\cite{Jacobo1985} and the literature therein).

 If $\gamma$ is transversal and $\Gamma$ is one of its lifts, then the complex plane $[\Gamma\wedge\Gamma']_{ t}$
 is of type $(1,1)$ and the set of null complex lines contained in $[\Gamma\wedge\Gamma']_{ t}$ is a chain
 which is independent of the choice of the lift $\Gamma$. This chain, denoted by $\mathcal{C}_{ \gamma} |_t$, is
 called the \emph{osculating chain} of $\gamma$ at $\gamma(t)$. By construction, $\mathcal{C}_{ \gamma} |_t$
 is the unique chain passing through~$\gamma(t)$ and tangent to $\gamma$ at the contact point~$\gamma(t)$.
For more details on the {CR}-geometry of transversal curves in the 3-sphere, we refer to~\cite{MNS-Kharkiv}.
{As a basic reference for transversal knots and their topological invariants in the framework of 3-dimensional
contact geometry, we refer to~\cite{Et2} and the literature therein}.
\end{Remark}

\subsection{The canonical frame and the local CR invariants}\label{ss:canonical-frame}

In the following, we will consider generic transversal curves.

\begin{Definition}
Let $\gamma$ be a generic transversal curve. A lift $\Gamma$ of $\gamma$, such that
\[
 \det(\Gamma,\Gamma',\Gamma'')= -1,
\]
is said to be a \emph{Wilczynski lift} (W-lift) of $\gamma$.
 If $\Gamma$ is a Wilczynski lift, any other is given by $\varpi\Gamma$, where $\varpi\in\C$
 is a cube root of unity.
 The function
\[
 a_\gamma = {\rm i}\langle\Gamma,\Gamma'\rangle^{-1}
\]
is smooth, real-valued, and independent of the choice of $\Gamma$.
We call $a_\gamma$ the \emph{strain density} of the parametrized transversal curve $\gamma$.
The linear differential form ${\rm d}s = a_\gamma {\rm d}t$ is called the \emph{infinitesimal strain}.
\end{Definition}

\begin{Proposition}[\cite{MNS-Kharkiv}]\label{2.2.1}
The strain density and the infinitesimal strain are invariant under the action of the CR transformation group.
In addition, if $h\colon I \to J$ is a change of parameter, then the infinitesimal strains ${\rm d}s$ and ${\rm d}\tilde s$
of $\gamma$ and $\tilde \gamma=\gamma\circ h$,
respectively, are related by ${\rm d}\tilde s = h^*({\rm d}s)$.
\end{Proposition}

\begin{proof}
This proof corrects a few misprints contained in the original one.
%given in~\cite{MNS-Kharkiv}.
If $A\in {G}$ and if $\Gamma$ is a Wilczynski lift of $\gamma$, then $\hat{\Gamma} = A\Gamma$ is a
Wilczynski lift of $\hat\gamma = A\gamma$. This implies that $a_\gamma = a_{\hat\gamma}$.
Next, consider a reparametrization $\tilde\gamma=\gamma\circ h$ of $\gamma$. Then, $\Gamma^*=\Gamma\circ h$
is a lift of $\tilde\gamma$, such that
\[
 \det\big(\Gamma^*,(\Gamma^*)' ,(\Gamma^*)'' \big) = -(h')^3.
\]
This implies that $\tilde\Gamma=(h')^{ -1}\Gamma^*$ is a Wilczynski lift of $\tilde\gamma$.
Hence
\[
% \langle\Gamma^*,(\Gamma^*)'\rangle
 \big\langle\tilde\Gamma,\big(\tilde\Gamma\big)'\big\rangle
 =(h')^{ -1}\langle\Gamma,\Gamma'\rangle\circ h.
\]
Therefore, the strain densities of $\gamma$ and $\tilde\gamma$ are related by
$a_{\tilde\gamma}=h'a_\gamma\circ h$.
Consequently, we have
$h^*({\rm d}s)= h'a_\gamma\circ h \, {\rm d}t= a_{\tilde\gamma} {\rm d}t ={\rm d}\tilde s$. \end{proof}

As a straightforward consequence of Proposition~\ref{2.2.1}, we have the following.

\begin{Corollary}
A generic transversal
curve $\gamma$ can be parametrized so that $a_\gamma = 1$.
\end{Corollary}

\begin{Definition}
If $a_\gamma = 1$, we say that $\gamma \colon J \to\S$ is a \emph{natural parametrization},
or a parametrization by the {\em pseudoconformal strain} or {\em pseudoconformal parameter}.
In the following, the {\em natural parameter} will be denoted by $s$.
\end{Definition}

We can state the following.

\begin{Proposition}[\cite{MNS-Kharkiv}]\label{Prop-CF}
Let $\gamma \colon J \to\S$ be a generic transversal curve, pa\-ra\-me\-tri\-zed by the natural parameter. There exists a
(first order) frame field
%(Wilczynski frame)
$\F$ $=$ $(F_1$, $F_2$, $F_3) \colon J \to {G}$ along $\gamma$,
such that $F_1$ is a W-lift and
\begin{equation}\label{MCE-W-frame}
\F^{-1}\F'=
\begin{pmatrix}
{\rm i}\kappa & -{\rm i} & \tau\\
0 &-2{\rm i}\kappa & 1 \\
1 & 0&{\rm i}\kappa\\
 \end{pmatrix}
 = : K_{\kappa,\tau}(s),
 \end{equation}
where $\kappa, \tau \colon J \to \mathbb R$ are smooth functions, called the
\emph{CR bending}
and the \emph{CR twist}, respectively.
 The frame field $\F$ is called a {\em Wilczynski frame}.
If $\F$ is a Wilczynski frame, any other is given by $\varpi\F$,
where $\varpi$ is a cube root of unity.
Thus, there exists a unique frame
field $[\mathcal{F}] \colon J \to [G]$ along $\gamma$, called the {\em canonical frame} of $\gamma$.
\end{Proposition}

\begin{Remark}
Given two smooth functions $\kappa, \tau\colon J\to\R$, there exists a generic transversal
curve $\gamma\colon J\to\S$,
parametrized by the natural parameter,
whose bending is $\kappa$ and whose twist is~$\tau$.
The curve $\gamma$ is unique up to CR automorphisms of~$\S$.
\end{Remark}

\begin{Remark}[cf.~\cite{MNS-Kharkiv}]\quad
\begin{enumerate}\itemsep=0pt
\item[$(1)$] Let $\gamma \colon J \to\S $ be as above and $\F = (F_1, F_2, F_3)\colon J \to {G}$ be a~Wilczynski frame
along $\gamma$. Then,
\[
 \gamma^\# \colon \ J \ni s\mapsto [F_3(s)]_\C\in\S
\]
is an immersed curve, called the \emph{dual} of $\gamma$.
The dual curve is Legendrian (i.e., tangent to the contact distribution)
if and only if $\tau = 0$. Thus, the twist can be viewed as a measure of how the dual curve differs
from being a Legendrian curve.

\item[$(2)$] Generic transversal curves with constant bending and twist have been studied by the authors in~\cite{MNS-Kharkiv}.
In the following we will consider generic transversal curves with nonconstant CR invariant functions.
\end{enumerate}
\end{Remark}

\begin{Remark}
Regarding the CR 3-sphere $\mathrm{S}^3\cong \mathcal{S}$ with its standard pseudo-her\-mi\-tian (PSH) structure $(J,\zeta)$,
Chiu and Ho (cf.~\cite{Chiu-Ho2019})
obtained a complete set of local PSH invariants
for \textit{horizontally regular} curves in $\mathrm{S}^3$
parametrized by the \textit{horizontal arc length} $\mathrm{w}$, namely
the $p$-\textit{curvature} $k_{\textsc{psh}}(\mathrm{w})$ and the $T$-\textit{variation} $\tau_{\textsc{psh}}(\mathrm{w})$.
The canonical PSH frame field
producing
the PSH invariants originates a
CR frame field which can be further adapted to a canonical CR frame following the reduction
procedure developed
in~\cite{MNS-Kharkiv}.
From this one can read the CR invariants.
Thus, in principle, the CR bending $\kappa(s)$ and the CR twist $\tau(s)$ can be
expressed in terms of the PSH invariants $k_{\textsc{psh}}(\mathrm{w}(s))$, $\tau_{\textsc{psh}}(\mathrm{w}(s))$
and their derivatives with respect to $s$.
\end{Remark}
\subsection{Discrete CR invariants of a closed transversal curve}

Referring to~\cite{MNS-Kharkiv,MS}, we briefly recall some CR invariants for closed transversal curves, namely
the notions of wave number, CR spin, and CR turning number (or Maslov index). These invariants will be used in Sections~\ref{s:integrability} and~\ref{s:heuristic}.
The \textit{wave number} is the ratio between the least period $\omega_{\gamma}$ of $\gamma$ and the least period $\omega$ of
the functions $(\kappa,\tau)$. The \textit{CR spin} is the ratio between~$\omega_{\gamma}$ and the least period
of a Wilczynski lift of $\gamma$.
The \textit{CR turning number} is
%the homotopy class of the canonical frame
%$[{\mathcal F}] :\R/\omega_{\gamma}\Z \cong \mathrm{S}^1\to [G]$ or, equivalently,
the degree (winding number) of the map
%$\Gamma_1-i\Gamma_3:\R/\omega_{\gamma} \Z\to \dot{\C}$.
%$\Gamma_1-i\Gamma_3: \mathrm{S}^1 \to \dot{\C}\: = \mathbb{C}\setminus \{0\}$.
$F_1-{\rm i}F_3\colon \R/\omega_{\gamma}\Z \cong \mathrm{S}^1 \to \dot{\C}\: = \mathbb{C}\setminus \{0\}$,
where $\mathcal{F}=(F_1,F_2,F_3)$ is a~Wilczynski frame along $\gamma$.

We will also make use of another invariant.

 \begin{Definition}
Let $[{\bf z}]\in \mathbb{CP}^2$ be a spacelike line.
Denote by ${\mathtt C}_{[{\bf z}]}$ the chain of all null lines orthogonal to $[{\bf z}]$, equipped with its positive orientation.
Consider a closed generic transversal curve $\gamma$ with its positive orientation.
Since $\gamma$ is closed and generic, the intersection of $\gamma$ with~${\mathtt C}_{[{\bf z}]}$ is either a finite
set of points, or
the empty set.
The \emph{trace of $\gamma$ with respect to $[{\bf z}]$}, denoted by~${\rm tr}_{[{\mathbf z}]}(\gamma)$,
 is the integer defined as follows: (1) if $\gamma \cap {\mathtt C}_{[{\bf z}]}\neq \varnothing$,
 then ${\rm tr}_{[{\bf z}]}(\gamma)$ counts the number of intersection points of $\gamma$ with ${\mathtt C}_{[{\bf z}]}$
 (since $\gamma$ is not necessarily a simple curve, the intersection points are counted with their multiplicities); (2)
 otherwise, ${\rm tr}_{[{\bf z}]}(\gamma)={\rm Lk}(\gamma, {\mathtt C}_{[{\bf z}]})$, the linking number of
 $\gamma$ with ${\mathtt C}_{[\mathbf{z}]}$. The trace of $\gamma$ is a $G$-equivariant map,
 that is, ${\rm tr}_{[{\bf z}]}(\gamma) = {\rm tr}_{{A}[{\bf z}]}({A}\gamma)$, for every ${A}\in G$.
 \end{Definition}

\section{The total CR twist functional}\label{s:total-twist}

Let $\mathfrak{T}$ be the space of generic transversal curves in $\mathcal{S}$, parametrized by the natural parameter.
We consider the \textit{total CR twist functional} $\mathcal{W} \colon \mathfrak{T} \to \mathbb R$, defined by
\begin{equation*}%\label{action-funct}
 \mathcal{W}[\gamma] = \int_{J_\gamma} { \tau_\gamma  \eta_\gamma}  ,
 \end{equation*}
where ${J_\gamma}$ is the domain of definition of the transversal curve $\gamma$, $\tau_\gamma$
is its twist, and $\eta_\gamma= {\rm d}s_\gamma$ is the infinitesimal strain of $\gamma$ (cf.\ Section~\ref{ss:canonical-frame}).

A curve $\gamma \in \mathfrak{T}$ is said to be a \textit{critical curve}
in $\mathcal{S}$ if it is a critical point of
$\mathcal{W}$ when one considers compactly
supported variations through generic transversal curves.

The main result of this section is the following.

\begin{thmx}\label{thmA}
Let $\gamma\colon J \to \S$ be a generic transversal curve parametrized by the natural parameter.
Then, $\gamma$
is a critical curve
if and only if~\footnote{As usual, we write $[L, K] = LK - KL$ for the commutator of $L$ and $K$.}
\begin{equation}\label{Lax-eqn}
L' (s)=
[L (s), K_{\kappa,\tau}(s)] ,
 \end{equation}
where
\begin{equation}\label{obser-L}
 L = \begin{pmatrix}
 0 & {\rm i} \tau'+ 3(1 - \tau\kappa) & 2 {\rm i} \tau\\
 \tau & 0 & \tau'+ 3{\rm i}(1 - \tau\kappa)\\
 3{\rm i} & -{\rm i} \tau& 0\\
 \end{pmatrix}
 \end{equation}
and $K_{\kappa,\tau}$ is defined as in~\eqref{MCE-W-frame}.
\end{thmx}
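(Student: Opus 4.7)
The plan is to derive the Euler--Lagrange equations of $\mathcal{W}$ by the Griffiths formalism applied to Wilczynski-framed curves in $G$, and then repackage them as the Lax equation \eqref{Lax-eqn}. Let $\mathcal{F}\colon J\to G$ be the canonical (Wilczynski) frame of a generic transversal curve $\gamma$ parametrized by the natural parameter, so that $\mathcal{F}^{-1}\mathcal{F}'=K_{\kappa,\tau}$ as in \eqref{MCE-W-frame}. A compactly supported admissible variation of $\gamma$ is encoded by a smooth map $X\colon J\to\mathfrak{g}$, supported in the interior of $J$, through $\partial_{\epsilon}\mathcal{F}_{\epsilon}|_{\epsilon=0}=\mathcal{F}\cdot X$, and the induced infinitesimal variation of the Maurer--Cartan pullback is
\[
\delta K_{\kappa,\tau}=X'+[K_{\kappa,\tau},X].
\]

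Writing $X$ relative to the basis dual to $(\alpha^1_1,\beta^1_1,\alpha^2_1,\beta^2_1,\alpha^3_1,\alpha^2_3,\beta^2_3,\alpha^1_3)$ and matching entries against the explicit form of $K_{\kappa,\tau}$, the vanishing of those components of $\delta K_{\kappa,\tau}$ that must remain zero in order for $\mathcal{F}_{\epsilon}$ to stay a Wilczynski frame at natural parameter produces linear relations expressing six of the eight components of $X$ in terms of the two free ones, while $\delta\kappa$ and $\delta\tau$ are determined as first-order differential expressions in $X$. Substituting into $\delta\mathcal{W}=\int_J\delta\tau\,ds$ and integrating by parts (all boundary terms vanish by compact support) yields an identity
\[
\delta\mathcal{W}=\int_J\bigl(\mathcal{E}_1\,\xi+\mathcal{E}_2\,\eta\bigr)\,ds,
\]
where $\xi,\eta$ are the two free components of $X$ and $\mathcal{E}_1,\mathcal{E}_2$ are scalar polynomial expressions in $\kappa,\tau,\kappa',\tau',\tau''$. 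The Euler--Lagrange system is therefore $\mathcal{E}_1=\mathcal{E}_2=0$.

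To recast it in Lax form, I use the Griffiths momentum construction. The left $G$-invariance of $\mathcal{W}$ implies, via Noether's principle, that the Lagrange multipliers enforcing the Maurer--Cartan constraint $\vartheta=K_{\kappa,\tau}\,ds$ assemble into a conserved function $L\colon J\to\mathfrak{h}$ obeying $L'=[L,K_{\kappa,\tau}]$. Reading off the multipliers from the integration-by-parts formula of the preceding step produces the explicit $L$ displayed in \eqref{obser-L}. For the converse, I expand $[L,K_{\kappa,\tau}]$ entry-by-entry with $L$ as in \eqref{obser-L}, compare with $L'$, and verify that the resulting componentwise identities collapse, after the expected cancellations, to exactly the two equations $\mathcal{E}_1=\mathcal{E}_2=0$.

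The main obstacle lies in the explicit identification of the entries of $L$ and the verification that it lands in $\mathfrak{h}$. As a sanity check one confirms that $\mathbf{h}L$ is Hermitian and $\operatorname{tr}L=0$; this pins down the correct normalization of the multipliers. A useful computational shortcut that avoids the heaviest bookkeeping is to run the calculation backwards: compute $L'-[L,K_{\kappa,\tau}]$ directly for the $L$ of \eqref{obser-L}, observe that since $\mathfrak{h}$ is a $G$-module the result is forced to lie in $\mathfrak{h}$, so by the self-adjointness of $\mathbf{h}L$ only two off-diagonal entries contribute independent scalar conditions, and identify them with $\mathcal{E}_1=\mathcal{E}_2=0$. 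This simultaneously checks the forward direction and supplies the converse, yielding the equivalence asserted by the theorem.
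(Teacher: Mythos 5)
Your proposal is correct in outline, but it reaches the theorem by a genuinely different route than the paper. You derive the Euler--Lagrange equations by a direct first-variation computation on Wilczynski frames: admissible variations are encoded by $\mathfrak{g}$-valued fields $X$ with $\delta K_{\kappa,\tau}=X'+[K_{\kappa,\tau},X]$, the six scalar constraints forced by the rigid shape of $K_{\kappa,\tau}$ (vanishing of the $\alpha_1^1,\alpha_1^2,\beta_1^2,\alpha_1^3,\alpha_3^2,\beta_3^2$ components of $\delta K$) cut the eight components of $X$ down to two free functions, and the fundamental lemma yields $\mathcal{E}_1=\mathcal{E}_2=0$. The paper instead runs Griffiths' exterior-differential-systems machinery: a Pfaffian system $(\mathcal{A},\eta)$ on the configuration space $[G]\times\mathbb{R}^2$, the phase space $\mathcal{Z}\subset T^*(M)$ with tautological form $\xi$, the Cartan system $(\mathcal{C}(d\xi),\eta)$, and an algorithmic reduction to the momentum space $\mathcal{Y}$, where the Euler--Lagrange system produces \eqref{eqns-motion1}--\eqref{eqns-motion2}; crucially, necessity (critical $\Rightarrow$ lift to the Cartan system) is not automatic in that formalism and the paper secures it by Bryant's constant-rank derived-flag criterion, verified in Remark \ref{r:derived-flags}. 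Your direct approach gets that converse for free, since the first variation characterizes criticality outright, at the cost of two points you gloss over: (i) compactly supported variations do not literally preserve the natural parameter, so you need the parametrization invariance of $\tau\,ds$ to justify restricting to natural-parameter-preserving variation fields; (ii) your Noether step only motivates the shape of $L$ --- in the paper $L$ arises canonically as the Killing-form image of the tautological form restricted to $\mathcal{Y}$ --- but since you close the loop with the direct check that $L'-[L,K_{\kappa,\tau}]=0$ collapses to $\mathcal{E}_1=\mathcal{E}_2=0$, the provenance of $L$ is immaterial to correctness (your counting via self-adjointness of $\mathbf{h}L$ is slightly off, though: in fact the single complex $(1,2)$ entry carries both equations, $2\kappa\tau'+\tau\kappa'=0$ as its real part and $\tau''+9\kappa(1-\tau\kappa)-\tau^2=0$ as its imaginary part, the remaining entries being identities or repetitions --- the same direct computation the paper performs in Step 4). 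What the paper's heavier formalism buys is reuse downstream: the momentum space, the conserved momentum $\mathfrak{M}=\mathcal{F}L\mathcal{F}^{-1}$ of Corollary \ref{cor-thmA}, and the ensuing conservation laws and moduli are the engine of Sections \ref{s:twist}--\ref{s:heuristic}, whereas your variational shortcut would have to reconstruct these objects separately.
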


\begin{proof}
The proof of Theorem~\ref{thmA} is organized in four steps and three lemmas.

\textbf{Step 1.} We show that generic transversal curves are in 1-1 correspondence
with the integral curves of a suitable Pfaffian differential system.

Let $\gamma \colon J \to \S$ be a generic transversal curve parametrized by the natural parameter.
According to Proposition~\ref{Prop-CF}, the canonical frame of $\gamma$ defines a unique lift
$[\mathcal{F}] \colon J \to [G]$.
The map
\[
 \mathfrak{f} \colon \ J\ni s\longmapsto ( [\F(s)], \kappa(s), \tau(s))\in [G]\times\R^2
 \]
is referred to as the \textit{extended frame} of $\gamma$.
The product space $M := [G]\times\R^2$ is called the \emph{configuration space}. The coordinates
on $\R^2$ will be denoted by $(\kappa, \tau)$.

With some abuse of notation,
we use $\alpha_1^1$, $\beta_1^1$, $\alpha_1^2$, $\beta_1^2$, $\alpha_1^3$,
$\alpha_3^2$, $\beta_3^2$, $\alpha_3^1$ to denote the entries of the Maurer--Cartan form of $[{G}]$
as well as their pull-backs on the configuration space $M$.
By Proposition~\ref{Prop-CF}, the extended frames of $\gamma$ are the integral curves of the Pfaffian
differential system $(\mathcal A, \eta)$ on $M$ generated by the
linearly independent 1-forms
\begin{alignat*}{5}
 & \mu^1=\alpha_1^2,\qquad && \mu^2=\beta_1^2 ,\qquad && \mu^3= \alpha_3^2- {\alpha^3_1},\qquad && \mu^4=\beta_3^2 , & \\
 & \mu^5=\alpha_1^1 ,\qquad && \mu^6=\beta_1^1-\kappa \alpha_1^3,\qquad && \mu^7 =\alpha_3^1-\tau\alpha_1^3,\qquad  &
\end{alignat*}
with the independence condition $\eta :=\alpha_1^3$.

If $\mathfrak{f} = ( [\F], \kappa, \tau) \colon J \to M$ is an integral curve of $(\mathcal{A}, \eta)$, then
$\gamma = [F_1] \colon J \to \mathcal S$ defines a~generic transversal curve, such that $[\F]$ is its canonical frame,
$\kappa$ its bending and $\tau$ its twist.
Accordingly, the integral curves of $(\mathcal{A},\eta)$ are the extended frames of generic transversal curves
in $\mathcal S$.

Thus, generic transversal curves are in 1-1 correspondence with the integral curves of the Pfaffian system
$(\mathcal{A},\eta)$ on the configuration space $M$.

If we put
\[
 \pi^1={\rm d}\kappa,\qquad \pi^2={\rm d}\tau,
\]
the 1-forms $\big(\eta,\mu^1,\dots,\mu^7,\pi^1,\pi^2\big)$ define an absolute parallelism on $M$.
Exterior differentiation and use of the Maurer--Cartan equations of ${G}$ yield the following structure equations
for the coframe $\big(\eta,\mu^1,\mathellipsis,\mu^7,\pi^1,\pi^2\big)$:
\begin{gather}\label{quadratic1}
\begin{cases}
%\begin{eqnarray*}
{\rm d}\eta =2\mu^1\wedge\mu^2+2\mu^5\wedge\eta,\\
{\rm d}\pi^1=d\pi^2=0,\\
%\end{eqnarray*}
\end{cases}
\\
\label{quadratic2}
\begin{cases}
%\begin{eqnarray*}
{\rm d}\mu^1=-\mu^1\wedge\mu^5+3\mu^2\wedge\mu^6+\big(3\kappa\mu^2-\mu^3\big)\wedge\eta,	\\
{\rm d}\mu^2=-3\mu^1\wedge\mu^6 - \mu^2\wedge\mu^3 - \big(3\kappa\mu^1+ \mu^4\big)\wedge\eta,	\\
{\rm d}\mu^3=-2\mu^1\wedge\mu^2-\mu^1\wedge\mu^7+\mu^3\wedge\mu^5+3\mu^4\wedge\mu^6  - \big(\tau\mu^1- 3\kappa\mu^4+3\mu^5\big)\wedge\eta,	\\
{\rm d}\mu^4=-\mu^2\wedge\mu^7- 3\mu^3\wedge\mu^6+\mu^4\wedge\mu^5 - \big(\tau\mu^2+ 3\kappa\mu^3-3\mu^6\big)\wedge\eta, \\
{\rm d}\mu^5=-\mu^1\wedge\mu^4+\mu^2\wedge\mu^3 + \big(\mu^2-\mu^7\big)\wedge\eta,	\\
{\rm d}\mu^6=-2\kappa\mu^1\wedge\mu^2-\mu^1\wedge\mu^3-\mu^2\wedge\mu^4 - \big(\mu^1+ 2\kappa\mu^5\big)\wedge\eta - \pi^1\wedge\eta,	\\
{\rm d}\mu^7=-2\tau\mu^1\wedge\mu^2-2\mu^3\wedge\mu^4-2\mu^5\wedge\mu^7  + \big(2\mu^4- 2\tau\mu^5\big)\wedge\eta - \pi^2\wedge\eta.	\\
%\end{eqnarray*}
\end{cases}
\end{gather}

\begin{Remark}\label{r:derived-flags}
From the structure equations it follows that the derived flag of $(\mathcal{A},\eta)$ is given by
$\mathcal{A}_{(4)} \subset \mathcal{A}_{ (3) } \subset \mathcal{A}_{ (2)} \subset \mathcal{A}_{ (1)}$, where
$\mathcal{A}_{(4)} = \{0\}$, $\mathcal{A}_{ (3) }= \text{span}\big\{\mu^1\big\}$, $\mathcal{A}_{ (2)} = \text{span}\big\{\mu^1,\mu^2,\mu^3\big\}$,
$\mathcal{A}_{ (1)}$ $=$ $\text{span}\big\{\mu^1$, $\mu^2$, $\mu^3$, $\mu^4$, $\mu^5\big\}$.
Thus, all the derived systems of $(\mathcal{A},\eta)$ have constant rank. For the notion of derived flag, see~\cite{Gr}.
\end{Remark}

\textbf{Step 2.}
We develop a construction due to Griffiths~\cite{Gr} on an affine subbundle of $T^*(M)$
 (cf.\ also~\cite{Bryant1987,Hsu, GM}) in order to derive the Euler--Lagrange equations.

Let $\mathcal{Z} \subset T^*(M)$ be the affine subbundle defined by
the 1-forms $\mu^1,\dots,\mu^7$ and $\lambda := \tau \eta$, namely
\[
 \mathcal{Z}=\lambda+\text{span}\big\{\mu^1,\dots,\mu^7\big\} \subset T^*(M).
\]
 We call $\mathcal{Z}$ the \textit{phase space} of the Pfaffian system $(\mathcal{A},\eta)$.
The 1-forms $\big(\mu^1,\dots,\mu^7, {\lambda}\big)$ induce a~global affine trivialization of $\mathcal{Z}$, which
may be identified with $M \times \mathbb{R}^7$ by the map
\[
M\times \mathbb{R}^7 \ni (([\mathcal{F}], \kappa, \tau), p_1, \dots, p_7) \longmapsto
%\eta_{|_{([\mathcal{F}], \kappa, \tau)}}
{\lambda_{|_{([\mathcal{F}], \kappa, \tau)}} }
+ \sum_{j=1}^7 p_j{\mu^j}_{|_{([\mathcal{F}], \kappa, \tau)}} \in \mathcal{Z} ,
\]
where $p_1,\dots, p_7$ are the fiber coordinates of the bundle map $\mathcal{Z} \to M$ with respect to the trivialization.
Under this identification, the restriction to $\mathcal{Z}$ of the Liouville (canonical) 1-form of $T^*(M)$ takes the form
\[
 \xi=\tau  \eta + \sum_{j=1}^7p_j\mu^j .
 \]
Exterior differentiation and use of the quadratic equations~\eqref{quadratic1} and~\eqref{quadratic2} yield
\begin{align*}
 {\rm d}\xi\equiv{}& \pi^2\wedge\eta
+ 2\tau \mu^5 \wedge \eta + \sum_{j=1}^7{\rm d}p_j\wedge\mu^j + p_1\big(3\kappa\mu^2-\mu^3\big)\wedge\eta \\
&{} - p_2\big(3\kappa\mu^1+\mu^4\big)\wedge\eta - p_3\big(\tau\mu^1-3\kappa\mu^4+3\mu^5\big)\wedge\eta \\
 &{} - p_4\big(\tau\mu^2+3\kappa\mu^3-3\mu^6\big)\wedge\eta
 + p_5\big(\mu^2-\mu^7\big)\wedge\eta\\
 &{} - p_6\big(\pi^1+\mu^1+ 2\kappa\mu^5\big)\wedge\eta - p_7\big(\pi^2-2\mu^4+4\tau\mu^5\big)\wedge\eta  ,
\end{align*}
where the sign `$\equiv$' denotes equality modulo the span of $\{ \mu^i\wedge\mu^ j\}_{ i,j = 1,\dots,7}$.

The Cartan system $(\mathcal{C}({\rm d}\xi), \eta)$ of the
2-form ${\rm d}\xi$ is the Pfaffian system on $\mathcal{Z}$ generated by the 1-forms
\[
\left\{X {\lrcorner} \,{\rm d}\xi \mid X \in \mathfrak{X}(\mathcal{Z}) \right\}\subset \Omega^1(\mathcal{Z}),
\]
with independence condition $\eta \neq 0$.

By Step~1, generic transversal curves are in 1-1 correspondence with the integral curves of the
Pfaffian system $(\mathcal{A},\eta)$.

Let $\mathfrak{f}\colon J \to M$ be the extended frame corresponding to the generic transversal curve
$\gamma\colon J \to \S$ parametrized by the natural parameter.
According to Griffiths approach to the calculus of variations (cf.~\cite{Bryant1987, Gr, Hsu, GM}),
if the extended frame $\mathfrak{f}$
admits a lift $y\colon J \to \mathcal{Z}$ to the phase space~$\mathcal{Z}$ which is an integral curve of the Cartan system $(\mathcal{C}({\rm d}\xi), \eta)$,
then $\gamma$ is a critical curve of the total twist functional with respect to compactly supported variations.

As observed by
Bryant~\cite{Bryant1987}, if all the derived systems
of $(\mathcal{A},\eta)$ are of
constant rank, as in the case under discussion (cf.\ Remark~\ref{r:derived-flags}), then the converse is also true.
Hence all extremal trajectories arise as projections of integral curves of the Cartan
system $(\mathcal{C}({\rm d}\xi), \eta)$.

Next, we compute the Cartan system $(\mathcal{C}({\rm d}\xi), \eta)$.
Contracting the 2-form ${\rm d}\xi$ with the vector fields of the tangent frame
\[
 (\partial_\eta,\partial_{ \mu^1},\dots,\partial_{\mu^7 },
 \partial_{ \pi^1},\partial_{ \pi^2},\partial_{ p_1},\mathellipsis,\partial_{ p_7})
\]
on $\mathcal{Z}$, dual to the coframe $\big(\eta, \mu^1,\dots,\mu^7,\pi^1,\pi^2,{\rm d}p_1,\dots,{\rm d}p_7\big)$,
yields the 1-forms
\begin{gather}%\label{C-system1}
\partial_{ p_j}{\lrcorner} \,{\rm d}\xi\equiv\mu^j, \qquad j=1,\dots,7, \nonumber
\\
\label{C-system2}\begin{cases}
-\partial_{ \pi^1}\lrcorner \,{\rm d}\xi\equiv p_6\eta = :\dot\pi_1,\\
-\partial_{ \pi^2}\lrcorner\,{\rm d}\xi\equiv (p_7 -1)\eta =:\dot\pi_2,\\
-\partial_\eta\lrcorner\,{\rm d}\xi\equiv (1 - p_7)\pi^2 =:\dot\eta,
\end{cases}
\\ \nonumber %\label{C-system3}
\begin{cases}
-\partial_{ \mu^1}\lrcorner\,{\rm d}\xi\equiv {\rm d}p_1 + (3\kappa p_2+\tau p_3+p_6)\eta =: \dot\mu^1,\\
-\partial_{ \mu^2}\lrcorner\,{\rm d}\xi\equiv {\rm d}p_2 - (3\kappa p_1-\tau p_4+p_5)\eta =: \dot\mu^2,\\
-\partial_{ \mu^3}\lrcorner\,{\rm d}\xi\equiv {\rm d}p_3 + (p_1+3\kappa p_4)\eta =: \dot\mu^3,\\
-\partial_{ \mu^4}\lrcorner\,{\rm d}\xi\equiv {\rm d}p_4 + (p_2-3\kappa p_3-2p_7)\eta =:\dot\mu^4,\\
-\partial_{ \mu^5}\lrcorner\,{\rm d}\xi\equiv {\rm d}p_5 - (2\tau - 3p_3- 2\kappa p_6 - 4\tau p_7)\eta=: \dot\mu^5,\\
-\partial_{ \mu^6}\lrcorner\,{\rm d}\xi\equiv {\rm d}p_6-3p_4\eta = : \dot\mu^6,\\
-\partial_{ \mu^7}\lrcorner\,{\rm d}\xi\equiv {\rm d}p_7 + p_5\eta =: \dot\mu^7.
\end{cases}
\end{gather}
We have proved the following.

\begin{lemmaA}%\label{l:Cartan-sys}
The Cartan system $(\mathcal{C}({\rm d}\xi), \eta)$
is the Pfaffian system on $\mathcal{Z} \cong M\times \mathbb{R}^7$ generated by
the 1-forms
\[
\big\{\mu^1,\dots,\mu^7,\dot\pi_1,\dot\pi_2,\dot\eta,\dot\mu^1,\dots,\dot\mu^7\big\}
\]
and with independence condition $\eta\neq 0$.
\end{lemmaA}

Now, the Cartan system $(\mathcal{C}({\rm d}\xi), \eta)$ is reducible, i.e., there exists a nonempty submanifold
$\mathcal{Y} \subseteq \mathcal{Z}$, called the reduced space, such that: (1) at each point of $\mathcal{Y}$
there exists an integral element
of $(\mathcal{C}({\rm d}\xi), \eta)$ tangent to $\mathcal{Y}$; (2) if $\mathcal{X} \subseteq \mathcal{Z}$ is any other
submanifold with the same property of $\mathcal{Y}$, then $\mathcal{X} \subseteq\mathcal{Y}$.
The reduced space $\mathcal{Y}$ is called the \textit{momentum space} of
the variational problem. Moreover, the restriction of the Cartan system $(\mathcal{C}({\rm d}\xi), \eta)$ to $\mathcal{Y}$
is called the \textit{Euler--Lagrange system}
of the variational problem, and will be denoted by $(\mathcal{J}, \eta)$.

A basic result states that the Pfaffian systems $(\mathcal{C}({\rm d}\xi), \eta)$ and $(\mathcal{J}, \eta)$ have
the same integral curves (cf.~\cite{Gr, GM}).

The system $(\mathcal{J}, \eta)$ can be constructed by an algorithmic procedure (cf.~\cite{Gr}).

\begin{lemmaA}\label{l:EL-system}
The momentum space $\mathcal{Y}$ is the $11$-dimensional submanifold of $\mathcal{Z}$ defined by the
equations
\[
p_7 =1, \qquad p_6= p_5 = p_4=0 , \qquad p_3 = -\frac23\tau, \qquad p_2=2(1-\tau\kappa) .
\]
The Euler--Lagrange system $(\mathcal{J}, \eta)$ is the Pfaffian system
on $\mathcal{Y}\cong M\times \mathbb R$, with independence
condition $\eta\neq 0$, generated by the $1$-forms
\begin{equation}\label{generators-EL-syst}
\begin{cases}
{\mu^1}_{|\mathcal{Y}}, \dots, {\mu^7}_{|\mathcal{Y}}, \\
\sigma_1 = {\rm d}p_1 + 6 \kappa (1-\tau\kappa)\eta -\frac23 \tau^2\eta  ,\\
\sigma_2 = -2\tau {\rm d}\kappa -2\kappa {\rm d}\tau -3kp_1 \eta ,\\
\sigma_3 = - 2{\rm d}\tau + 3 p_1 \eta .\end{cases}
\end{equation}
\end{lemmaA}

\begin{proof}[Proof of Lemma A\ref{l:EL-system}]
Let $V_1({\rm d}\xi) \hookrightarrow \mathbb{P}(T(\mathcal{Z})) \to \mathcal{Z}$ be the totality of 1-dimensional integral elements of
$(\mathcal{C}({\rm d}\xi),\eta)$. In view of~\eqref{C-system2}, we find that
\[
{V_1({\rm d}\xi)}_{|_{(([\mathcal{F}], \kappa, \tau); p_1,\dots,p_7)}} \neq \varnothing \iff p_6 =0, \ p_7 =1.
\]
Thus,
the image $\mathcal{Z}_1 \subset \mathcal{Z}$
of $V_1({\rm d}\xi)$ with respect to the natural projection $V_1({\rm d}\xi) \to \mathcal{Z}$, is given by
\[
 \mathcal{Z}_1= \left\{ (([\mathcal{F}], \kappa, \tau); p_1,\dots,p_7) \in \mathcal{Z} \mid p_6 =0, \, p_7 =1 \right\}.
 \]
Next, the restriction of $\dot\mu^6$ and $\dot\mu^7$ to $\mathcal{Z}_1$ take the form $\dot\mu^6=-3p_4\eta$
and $\dot\mu^7=p_5\eta$.
Thus, the second
reduction
%involutive prolongation
$ \mathcal{Z}_2$ is given by
\[
 \mathcal{Z}_2= \left\{ (([\mathcal{F}], \kappa, \tau); p_1,\dots,p_7) \in \mathcal{Z}_1 \mid p_4 = p_5 = 0 \right\}.
 \]
Considering the restriction of $\dot\mu^4$ and $\dot\mu^5$ to $\mathcal{Z}_2$ yields the equations
\[
 p_2=2(1-\tau\kappa), \qquad p_3 = -\frac23\tau  ,
\]
which define the third reduction
%involutive prolongation
$\mathcal{Z}_3$. Now, the restriction $\mathcal{C}_3({\rm d}\xi)$ to
$\mathcal{Z}_3$ of the Cartan system~$\mathcal{C}({\rm d}\xi)$ is generated by the 1-forms $\mu^1, \dots, \mu^7$
and
\[
\begin{cases}
\sigma_1 = {\rm d}p_1 + 6 \kappa (1-\tau\kappa)\eta -\frac23 \tau^2\eta  ,\\
\sigma_2 = {\rm d}p_2 -3kp_1 \eta = -2\tau {\rm d}\kappa -2\kappa {\rm d}\tau -3kp_1 \eta ,\\
\sigma_3 = - 2{\rm d}\tau + 3 p_1 \eta .
\end{cases}
\]
This implies that there exists an integral element of $V_1({\rm d}\xi)$ over each point of $\mathcal{Z}_3$,
i.e., for each $p\in \mathcal{Z}_3$, ${V_1({\rm d}\xi)}_{|p} \neq \varnothing$. Hence, $\mathcal{Y} := \mathcal{Z}_3$
is the momentum space and $(\mathcal{J}, \eta) := (\mathcal{C}_3({\rm d}\xi), \eta)$ is the
reduced system of
$(\mathcal{C}({\rm d}\xi), \eta)$. \end{proof}

\textbf{Step 3.} We derive the Euler--Lagrange equations.

By the previous discussion, all the extremal trajectories of $\S$ arise as
projections of the
integral curves of the Euler--Lagrange system. If $y\colon J \to \mathcal{Y}$ is an integral curve
of the Euler--Lagrange system $(\mathcal{J}, \eta)$ and $\mathtt{pr}\colon \mathcal{Y} \to \S$
is the natural projection of $\mathcal{Y}$ onto $\S$, then $\gamma= \mathtt{pr}\circ y\colon J \to \S$
is a critical curve of the total twist functional with respect to compactly supported variations.

We can prove the following.

\begin{lemmaA}\label{l:eqns-motion}
A curve $y\colon J \to \mathcal{Y}$ is an integral curve of the Euler--Lagrange system $(\mathcal{J}, \eta)$ if and only if
the bending $\kappa$ and the twist $\tau$ of the transversal curve $\gamma = \mathtt{pr}\circ y\colon J \to \S$
satisfy the equations
\begin{gather}
 2\kappa \tau' + \tau \kappa' = 0 ,\label{eqns-motion1} \\
 \tau'' + 9 \kappa(1 -\tau\kappa) - \tau^2 = 0\label{eqns-motion2} .
\end{gather}
\end{lemmaA}

%\subsection{The natural equations of integral curves}\label{ss:natural-equations}
\begin{proof}[Proof of Lemma A\ref{l:eqns-motion}]
%Let $\mathcal{V}(\mathcal{J},\eta)$ be the set of integral curves of the Euler--Lagrange system
%$(\mathcal{J}, \eta)$.

If $y= (([\mathcal{F}], \kappa, \tau); p_1) \colon J \to \mathcal{Y}$ is an integral curve of the Euler--Lagrange system
$(\mathcal{J}, \eta)$,
the projection $\gamma = \mathtt{pr}\circ y$ is the smooth curve $\gamma(s) = [{F}_1(s)]$,
where ${F}_1$ is the first column of $\mathcal{F}$.
%element of $\mathcal{V}(\mathcal{J},\eta)$,
%
The equations
\[
\mu^1= \cdots= \mu^7 =0
\]
 together with the
independence condition $\eta\neq 0$ tell us that $([\mathcal{F}], \kappa, \tau)$
is an integral curve of the Pfaffian system $(\mathcal{A}, \eta)$ on the configuration space $M$.
Hence $\gamma$ is a generic transversal curve with bending $\kappa$, twist $\tau$ and $\mathcal{F}$
is a Wilczynski frame along $\gamma$.
%
%$[\mathcal{F}]$ is the canonical frame along the
%transversal curve $\gamma = [{F}_1]$ and that $\kappa$ and $\tau$ are the
%bending and the twist of $\gamma$.
%
Next, for the smooth function $\kappa, \tau\colon J \to \mathbb{R}$, let $\kappa'$, $\kappa''$ and $\tau'$, $\tau''$, etc.,
be defined by
\[
 {\rm d}\kappa = \kappa' \eta,\qquad {\rm d}\kappa' = \kappa''\eta, \qquad {\rm d}\tau = \tau' \eta, \qquad {\rm d}\tau' = \tau''\eta .
 \]
With reference to~\eqref{generators-EL-syst}, equation $\sigma_3 = 0$ implies
\[
 p_1 = \frac23 \tau' .
 \]
Further, $\sigma_2=0$ gives
%\begin{equation}\label{1st-eqn-motion}
\[
 2\kappa \tau' + \tau \kappa' = 0 .
 \]
 % \end{equation}
%
Finally, equation $\sigma_1 =0$ yields
%\begin{equation}\label{2nd-eqn-motion}
\[
 \tau'' + 9 \kappa(1 -\tau\kappa) - \tau^2 = 0 .
 \]
%\end{equation}
%
%According to Remark~\ref{Hsu-Bryant},
%the equations~\eqref{1st-eqn-motion} and~\eqref{2nd-eqn-motion}
%amount
%%provide
%to the Euler--Lagrange equations (variational equations) of the extremals of $\mathcal W$.
%

Conversely, let $\gamma \colon J \to \mathcal{S}$ be a generic transversal curve, parametrized by the natural
parameter, satisfying~\eqref{eqns-motion1} and~\eqref{eqns-motion2} and let
$[\mathcal{F}]$ its canonical frame.
%and $\kappa_\gamma$, $\tau_\gamma$ its bending an twist,
Then,
%the lift $y_\gamma : J \to \mathcal{Y}$
%of $\gamma$ to the momentum space $\mathcal{Y}$ defined by
\[
 y(s) = \bigg(([\mathcal{F}], \kappa, \tau) ; \frac23 \tau' \bigg)
 \]
is, by construction, an integral curve of the Euler--Lagrange system $(\mathcal{J}, \eta)$.
\end{proof}

\textbf{Step 4.}
We eventually provide a Lax formulation for the Euler--Lagrange equations (cf.~\eqref{eqns-motion1} and~\eqref{eqns-motion2})
of a critical curve
$\gamma\colon J \to \S$.

Using the Killing form of $\mathfrak{g}$, the dual Lie algebra $\mathfrak{g}^*$ can be identified with $\mathfrak{h}={\rm i}\mathfrak{g}$,
the $G$-module of traceless selfadjoint endomorphisms of $\mathbb{C}^{2,1}$.
Under this identification, the restriction to $\mathcal{Y}$ of the tautological 1-form $\xi$ goes over
to an element of $\mathfrak{h}$ which originates the $\mathfrak{h}$-valued function $L \colon J \to \mathfrak{h}$
given by
\begin{equation}\label{L}
L(s)=
\begin{pmatrix}
0 & {\rm i} \tau'+ 3(1 - \tau\kappa) & 2 {\rm i} \tau\\
\tau & 0 & \tau'+ 3{\rm i}(1 - \tau\kappa)\\
3{\rm i} & -{\rm i} \tau \hphantom{-}& 0
\end{pmatrix}.
\end{equation}
A direct computation shows that the Euler--Lagrange equations~\eqref{eqns-motion1} and~\eqref{eqns-motion2}
of the critical curve
%extremal trajectory
$\gamma$ are satisfied if and only if
\[
 L' (s)= [L (s), K_{\kappa,\tau}(s) ] ,
 \]
where $K_{\kappa,\tau}$ is given by~\eqref{MCE-W-frame}. This concludes the proof of Theorem~\ref{thmA}.
\end{proof}

As a consequence of Theorem~\ref{thmA}, we have the following.

\begin{Corollary}\label{cor-thmA}
Let $\gamma \colon J \to \S$ be a generic transversal curve parametrized by the natural parameter.
Let $[\mathcal{F}] \colon J \to [G]$ be the canonical frame of $\gamma$ and let $L \colon J \to \mathfrak{h}$
be as in~\eqref{L}. If $\gamma$ is a critical curve,
%an extremal trajectory,
the Lax equation~\eqref{Lax-eqn}
implies that
\begin{equation*}%\label{momentum}
\mathcal{F}(s)  L(s) \mathcal{F}^{-1}(s)= \mathfrak{M} , \qquad \forall \, s\in J,
 \end{equation*}
where $\mathfrak{M}$ is a fixed element of
$\mathfrak h$
corresponding to a chosen value $L(s_0)$ of $L(s)$.
\end{Corollary}

\begin{Definition}
The element $\mathfrak{M}\in \mathfrak{h}$ is called the \textit{momentum} of
the critical curve~$\gamma$.
\end{Definition}

The characteristic polynomial of the momentum $\mathfrak{M}$ is
\begin{equation*}%\label{char-poly}
 -x^3 -6\kappa\tau^2 x +54\kappa\tau -27\kappa^2 \tau^2 +2\tau^3 -3\tau'^2 -27  .
 \end{equation*}
 The conservation of the momentum along {$\gamma$}
%yields $\mathrm{tr} (\mathfrak{M}^2)= {constant}$ and
%$\mathrm{det} (\mathfrak{M})= {constant}$, and hence
yields the two conservation laws
 %(``coadjoint orbit conditions'')
\begin{gather*}%\label{cons-momentum}
\kappa \tau^2 = {c_1} \label{CM1} ,\\
%&& 9 - 18 \kappa(t) \tau(t) + 9 \kappa^2(t) \tau^2(t) -\frac23 \tau^3(t) + (\tau'(t))^2 = c_2 \label{CM2} ,
 - 18 \kappa \tau + 9 \kappa^2 \tau^2 -\frac23 \tau^3 + \tau'^2 = C_2 -9 \label{CM2} ,
\end{gather*}
for real constants $c_1$ and $C_2$. We let $c_2 := C_2-9$.
%{\blu
%With these notation,
Using this notation,
the (opposite of the) characteristic polynomial of the momentum is
\begin{equation*}%\label{char-poly-bis}
 {\rm Q}(x)=x^3+6c_1x+(27+3c_2).
 \end{equation*}

 If $c_1\neq 0$, the twist and the bending are never zero and the conservation laws can be rewritten as
\begin{equation}\label{cons-momentum-bis}
 \kappa=c_1\tau^{-2},\qquad \frac{3}{2} \tau^2\tau'^2=\tau^5+\frac{3}{2}c_2\tau^2+27c_1\tau-\frac{27}{2}c_1^2.
 \end{equation}
If $c_1=0$, it can be easily proved that $\kappa=0$ and the second conservation law takes the form
\begin{equation*}%\label{cons-momentum-tris}
 \tau'^2 =\frac{2}{3}\tau^3+c_2.
 \end{equation*}
%}
%
\begin{Definition}
The pair of real constants ${\bf c}=(c_1, c_2)$ is called the
\textit{modulus} of the critical curve~$\gamma$.
%extremal trajectory

\end{Definition}

\begin{Remark}
For the application of Griffiths' approach to
%the study of
other geometric variational problems, the reader is referred to
%Related variational problems were studied in
\cite{DMN,EMN-JMAA,Gr,GM,MN-CQG,MN-SIAM,MN-CAG}.
\end{Remark}

\section{The CR twist of a critical curve}\label{s:twist}

\subsection{Phase types}%\label{ss:p-o-types}

%For every ${\bf c}=(c_1,c_2)\in \R^2$, by ${\rm P}_{{\bf c}}$ and ${\rm Q}_{{\bf c}}$ we denote
%the quintic polynomial in principal form and the cubic polynomial given respectively by
%%
%%For every ${\bf c}=(c_1,c_2)\in \R^2$, we denote by ${\rm P}_{{\bf c}}$ and by ${\rm Q}_{{\bf c}}$
%%the quintic polynomial in principal form and the cubic polynomial
%%
%$$
%{\rm P}_{{\bf c}}(x)=x^5+\frac{3}{2}c_2x^2+27c_1x-\frac{27}{2}c_1^2,\quad
%{\rm Q}_{{\bf c}}(x)=x^3+6c_1x+(27+3c_2).
% $$

For ${\bf c}=(c_1,c_2)\in \R^2$, we denote by ${\rm P}_{{\bf c}}$
the quintic polynomial in principal form given by
%
%For every ${\bf c}=(c_1,c_2)\in \R^2$, we denote by ${\rm P}_{{\bf c}}$ and by ${\rm Q}_{{\bf c}}$
%the quintic polynomial in principal form and the cubic polynomial
%
\begin{equation*}%\label{Quintic_Poly}
{\rm P}_{{\bf c}}(x)=x^5+\frac{3}{2}c_2x^2+27c_1x-\frac{27}{2}c_1^2
%{\rm Q}_{{\bf c}}(x)=x^3+6c_1x+(27+3c_2).
 \end{equation*}
 and by ${\rm Q}_{{\bf c}}$ the cubic polynomial given by
\begin{equation}\label{Cubic_Poly}
%{\rm P}_{{\bf c}}(x)=x^5+\frac{3}{2}c_2x^2+27c_1x-\frac{27}{2}c_1^2,\quad
{\rm Q}_{{\bf c}}(x)=x^3+6c_1x+(27+3c_2).
 \end{equation}

Excluding the case ${\bf c}=0$, ${\rm P}_{{\bf c}}$ possesses at least a pair of complex conjugate roots.

\begin{Definition}%\label{pht}
We adopt the following terminology.
\begin{itemize}\itemsep=0pt

\item ${\bf c}\in \R^2$ is of \emph{phase type ${\mathcal A}$} if ${\rm P}_{{\bf c}}$ has four complex roots $a_j\pm {\rm i}b_j$, $j=1,2$, $0<b_1<b_2$,
and a simple real root $e_1$;

\item ${\bf c}\in \R^2$ is of \emph{phase type ${\mathcal B}$} if ${\rm P}_{{\bf c}}$ has two complex roots $a \pm {\rm i}b$,
%$a_\pm ib$,
$b>0$, and three simple real roots
$e_1<e_2<e_3$;

\item ${\bf c}\in \R^2$ is of \emph{phase type} ${\mathcal C}$ if ${\rm P}_{{\bf c}}$ has a multiple {real} root.
\end{itemize}
In the latter case, two possibilities may occur: (1) ${\rm P}_{{\bf c}}$ has a double real root and a simple real root; or
(2) ${\rm P}_{{\bf c}}$ has a {real} root of multiplicity $5$.

By the same letters, we also denote the corresponding sets of moduli of
phase types ${\mathcal A}$, ${\mathcal B}$, and ${\mathcal C}$, respectively.
\end{Definition}

Next, we give a more detailed description of the sets ${\mathcal A}$, ${\mathcal B}$, and ${\mathcal C}$.
%
%denoting, respectively, the sets of moduli of phase type ${\mathcal A}$, ${\mathcal B}$ and ${\mathcal C}$.
%
To this end, we start by defining the \emph{separatrix curve}. Let
$(m,n)$ be the homogeneous coordinates of $\mathbb{RP}^1$ and let $[(m_*,n_*)]$ be the point of $\mathbb{RP}^1$ such that $3m_*^3+6m_*^2n_*+4m_*n_*^2+2n_*^3=0$ (i.e., $m_*=1$ and $n_*\approx-0.72212$).

\begin{Definition}
The \emph{separatrix curve} $\Xi\subset \R^2$ is the image of the parametrized curve
$\xi =(\xi_1,\xi_2) \colon \mathbb{RP}^1\setminus\{[(m_*,n_*)]\}\to \R^2$, defined by
\begin{gather*}
{\xi_1([(m,n)])}=\frac{6\sqrt[3]{2}mn^{4/3}\big(3m^2+2mn+n^2\big)^{4/3}}{\big(3m^3+6m^2n+4mn^2+2n^3\big)^{5/3}},\\
%\xi_2(m,n)&=-\frac{36n(3m^2+2mn+n^2)(4m^3+3m^2n+2mn^2+n^3)}{(3m^3+6m^2n+4mn^2-2n^3)^{2}}.
{\xi_2([(m,n)])}=-\frac{36n\big(3m^2+2mn+n^2\big)\big(4m^3+3m^2n+2mn^2+n^3\big)}{\big(3m^3+6m^2n+4mn^2 + 2n^3\big)^{2}}.
\end{gather*}
\end{Definition}

\begin{Remark}\label{remarkseparatrix}
The map $\xi$ is injective and $\Xi$ has a cusp at
$\xi([(1,1)])=\big(\frac{4}{5}\big(\frac{6}{5}\big)^{2/3},-\frac{48}{5}\big)$.
It is regular elsewhere. In addition,
$\Xi$ has a horizontal inflection point at ${\xi([(0,1)])}=(0,-9)$. Let~${\rm J}_{\xi}$ be the interval
$(\arctan(n_*),\arctan(n_*)+\pi)\approx (-0.625418,2.51617)$. Then,
$\widetilde{\xi}\colon t\in {\rm J}_{\xi}\to \xi(\cos(t),\sin(t))\in \Xi$ is another parametrization of $\Xi$.
The inflection point is $\widetilde{\xi}(\pi/2)$. The ``negative part" $\Xi_-=\Xi\cap \big\{{\bf c}\in \R^2 \mid c_1<0\big\}$ of $\Xi$
is parametrized by the restriction of $\widetilde{\xi}$ to $\hat{{\rm J}}_{\xi}=(\pi/2,\pi +\arctan(n_*))$.
The left picture of Figure~\ref{FIG1} reproduces the separatrix curve (in black); the negative part of the separatrix
curve is highlighted in dashed-yellow.
The cusp is the red point and the horizontal inflection point is coloured in green.
\end{Remark}

\begin{Definition}
The (open) \emph{upper and lower domains} bounded by the separatrix curve $\Xi$ are denoted by ${\mathcal M}_{\pm}$.
In Figure~\ref{FIG1},
the upper domain $\mathcal{M}_+$ is coloured in three orange tones: orange, dark-orange and light-orange;
the lower domain $\mathcal{M}_-$ is coloured in two brown tones: light-brown and brown.
%They are coloured in orange, light and dark orange, and brown, respectively.
\end{Definition}

\begin{figure}[t]\centering
\includegraphics[height=6cm,width=6cm]{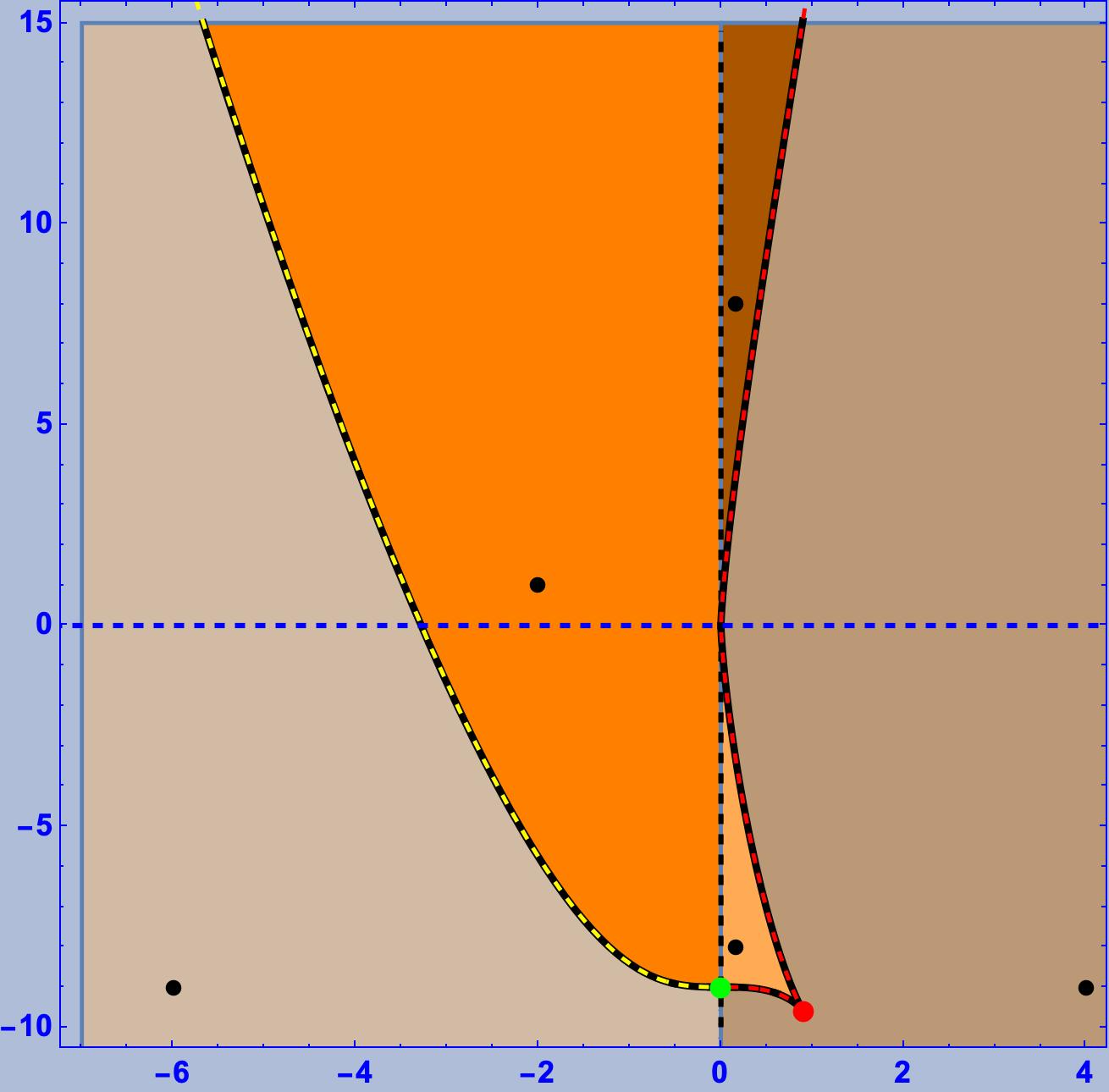}\qquad
\includegraphics[height=6cm,width=6cm]{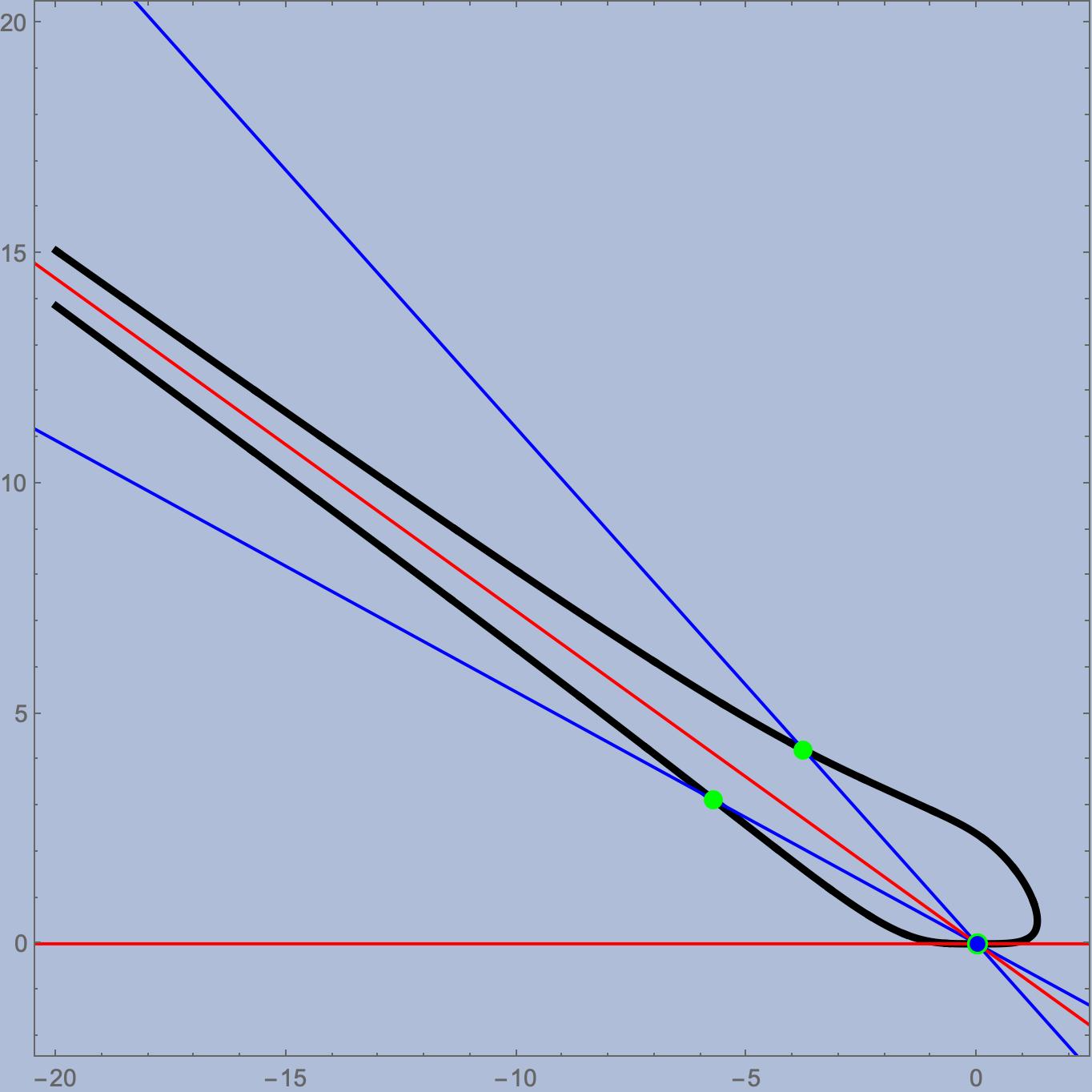}
\caption{On the left: the separatrix curve (in black), the upper domain $\mathcal{M}_+$ (coloured in three orange tones)
and the lower domain $\mathcal{M}_-$ (coloured in two brown tones). On the right: the curve ${\mathtt C}$ and its parametrization
obtained by intersecting ${\mathtt C}$ with lines through the origin.}\label{FIG1}
\end{figure}

\begin{Proposition}%\label{multp}
The polynomial ${\rm P}_{{\bf c}}$ has multiple roots if and only if ${\bf c}\in \Xi\cup Oy$, has four complex roots
 if and only if ${\bf c}\in {\mathcal M}_-\setminus (Oy\cap {\mathcal M}_- )$, and has three distinct real roots if and only if
 ${\bf c}\in {\mathcal M}_+\setminus (Oy\cap {\mathcal M}_+ )$. Equivalently,
\[
{\mathcal A}={\mathcal M}_-\setminus (Oy\cap {\mathcal M}_-),\qquad {\mathcal B}
 ={\mathcal M}_+\setminus (Oy\cap {\mathcal M}_+),\qquad
 {\mathcal C}=\Xi\cup Oy.
\]
\end{Proposition}

\begin{proof}
First, we prove the following claim.

\begin{Claim*} ${\rm P}_{{\bf c}}$ has a double root $a_3\neq 0$ if and only {if} ${\bf c}$ belongs
to the separatrix curve minus the cusp.
\end{Claim*}

Note that $c_1\neq 0$ (otherwise the double root would be $0$). Let $a_4$ be the other simple real root and $b_1+{\rm i}b_2$,
$b_1-{\rm i}b_2$, $b_2>0$, be the two complex conjugate roots.
Since the sum of the roots of ${\rm P}_{{\bf c}}$ is zero, we have $b_1=-\frac{1}{2}(2a_3+a_4)$.
Since the coefficient of $x^3$ is zero and $b_2>0$, we get $b_2=\sqrt{2a_3^2+a_3a_4+3a_4^2/4}$.
Expanding $(x-a_3)^2(x-a_4)(x-b_1-{\rm i}b_2)(x-b_1+{\rm i}b_2)$
%$(x-a_3^2)(x-a_4)(x-b_1-ib_2)(x-b_1+ib_2)$
and comparing the {coefficients} of the
monomials $x^n$, $n=1,\dots, 4$, with the coefficients of ${\rm P}_{{\bf c}}$ we may write $c_1$ and
$c_2$ as functions of $a_3$ and $a_4$,
\begin{gather}
 c_1=\frac{1}{27}\big(3a_3^4+6a_3^3a_4+4a_3^2a_4^2+2a_3a_4^3\big),\nonumber\\
c_2=-\frac{2}{3}\big(4a_3^3+3a_3^2a_4+2a_3a_4^2+a_4^3\big).\label{eq1multp}
\end{gather}
In addition,
\[
 c_1^2=\frac{2}{27}\big(3a_3^4a_4+2a_3^3a_4^2+a_3^2a_4^3\big).
\]
%$$
% c_2^2=\frac{2}{27}(3a_3^4a_3+2a_3^3a_4^2+a_3^2a_4^3).
% $$
Taking into account that $a_3\neq 0$, it follows that $(a_3,a_4)$ belongs to the algebraic curve
${\mathtt C}$ (the black curve on the right picture in Figure~\ref{FIG1}) defined by the equation
\[
 54y\big(3x^2+2xy+y^2\big)-\big(3x^3+6x^2y+4xy^2+2y^3\big)^2=0.
\]
Now, consider the line $\ell_{m,n}$ through the origin, with homogeneous coordinates $(m,n)$, i.e., the line with parametric
equations $p_{m,n}(t)=(mt,nt)$.
If $(m,n)\neq (1,0)$ and $3m^3+6m^2n+4mn^2+2n^3\neq 0$ (we are excluding the two red lines
on the right picture in Figure~\ref{FIG1}), $\ell_{m,n}$
%(i.e., we are excluding the two red lines in the picture on the right of Figure~\ref{FIG1})
intersects ${\mathtt C}$ when $t=0$ and $t=t_{m,n}$,
where
\[
 t_{m,n}=\frac{3\sqrt[3]{2}\sqrt[3]{n\big(3m^2+2mn+n^2\big)}}{\sqrt[3]{\big(3m^3+6m^2n+4mn^2+2n^3\big)^2}}.
\]
If {$(m,n)= (1,0)$}
%$(m,n)= (0,1)$
or $3m^3+6m^2n+4mn^2+2n^3= 0$, $\ell_{m,n}$ intersects ${\mathtt C}$ only at the origin
(see the right picture in Figure~\ref{FIG1}). Hence
$\beta \colon [(m,n)]\to t_{m,n}\cdot (m,n)$, $[(m,n)]\neq [(1,0)]$,
$3m^3+6m^2n+4mn^2+2n^3\neq 0$, is a parametrization of ${\mathtt C}\setminus \{(0,0)\}$.
Thus, using~\eqref{eq1multp}, the map
\[
 [(m,n)]\to (c_1(\beta([(m,n])),c_2(\beta([(m,n]))\in \R^2
\]
is a parametrization of the set of all ${\bf c}$, $c_1\neq 0$, such that $\mathrm{P}_{{\bf c}}$ has multiple roots.
It is now a~computational matter to check that
$(c_1(\beta([(m,n])),c_2(\beta([(m,n]))=\xi([(m,n)])$. This proves the claim. It also shows that ${\rm P}_{{\bf c}}$
has multiple roots if and only if ${\bf c}\in \Xi\cup Oy$.

To prove the other assertions, we begin by observing that the discriminant of the derived polynomial ${\rm P}'_{{\bf c}}$
is negative. Hence
${\rm P}'_{{\bf c}}$ has two distinct real roots and a pair of complex conjugate roots.
Denote by $x'_{{\bf c}}$ and $x''_{{\bf c}}$ the real roots of ${\rm P}'_{{\bf c}}$,
ordered so that
%with the ordering
$x'_{{\bf c}}<x''_{{\bf c}}$. Observe that $x'_{{\bf c}}$ and~$x''_{{\bf c}}$ are differentiable functions of $\textbf{c}$.
Then, ${\rm P}_{{\bf c}}$ possesses three distinct real roots if and only if $x'_{{\bf c}}\cdot x''_{{\bf c}}<0$,
one simple real root if and only if $x'_{{\bf c}}\cdot x''_{{\bf c}}>0$,
and a multiple root if and only if~$x'_{{\bf c}}\cdot x''_{{\bf c}}=0$.

From the first part of the proof, the set of all ${\bf c}\in \R^2$, such that ${\rm P}_{{\bf c}}$ has only simple roots
is the complement of $\Xi\cup Oy$. This set has five connected components:
%
%(cf.\ Figure~\ref{FIG1}):\footnote{With reference to
%the left picture in Figure~\ref{FIG1}, ${\mathcal M}_+'$ is the orange domain, ${\mathcal M}_+''$ is the dark-orange domain,
%${\mathcal M}_+'''$ is the light-orange domain, ${\mathcal M}_-'$ is the light-brown domain, and ${\mathcal M}_-''$ is
%the brown domain.}
%
%\footnote{
%${\mathcal M}_+$ is the orange domain in Figure~\ref{FIG1}, ${\mathcal M}_+''$ is the dark-orange domain in Figure~\ref{FIG1}, ${\mathcal M}_+'''$ is the light-orange domain in Figure~\ref{FIG1}, ${\mathcal M}_-'$ is the light-brown domain in Figure~\ref{FIG1}
%and ${\mathcal M}_-''$ is the brown-domain in Figure~\ref{FIG1}.}
%
\begin{gather}
{\mathcal M}_+' =  \{{\bf c}\in {\mathcal M}_+\setminus (Oy\cap {\mathcal M}_+ ) \mid c_1<0  \}, \nonumber \\
{\mathcal M}_+'' = \{{\bf c}\in {\mathcal M}_+\setminus (Oy\cap {\mathcal M}_+) \mid c_1>0\  \text{and}\  c_2>0  \}, \nonumber \\
{\mathcal M}_+''' = \{{\bf c}\in{\mathcal M}_+\setminus (Oy\cap {\mathcal M}_+) \mid c_1>0\ \text{and}\  c_2<0  \}, \nonumber \\
{\mathcal M}_-' = \{{\bf c}\in {\mathcal M}_-\setminus (Oy\cap {\mathcal M}_-) \mid c_1<0  \}, \nonumber \\
{\mathcal M}_-'' = \{{\bf c}\in {\mathcal M}_-\setminus (Oy\cap {\mathcal M}_-) \mid c_1>0  \}.\label{connectedcomponents}
 \end{gather}
Referring to
the left picture in Figure~\ref{FIG1}, ${\mathcal M}_+'$ is the orange domain, ${\mathcal M}_+''$ is the dark-orange domain,
${\mathcal M}_+'''$ is the light-orange domain, ${\mathcal M}_-'$ is the light-brown domain, and ${\mathcal M}_-''$ is the brown domain.

Consider the following points (the black points in Figure~\ref{FIG1}):
\begin{alignat*}{4}
& {\bf c}_1=(-2,1)\in {\mathcal M}'_+, \qquad && {\bf c}_2=(1/6,8)\in {\mathcal M}_+'', \qquad && {\bf c}_3=(1/6,-8)\in {\mathcal M}_+''', &\\
&{\bf c}_4=(-6,-9)\in {\mathcal M}_-', \qquad && {\bf c}_5=(4,-9)\in {\mathcal M}_-''. &
\end{alignat*}

Using Klein's formulas for the icosahedral solution of a quintic polynomial in principal form (cf.~\cite{K,Na,Tr}),\footnote{We used the
Trott and Adamchik code (cf.~\cite{Tr}) implementing Klein's formulas in the software \textsc{Mathematica}.} we find that the polynomials
${\rm P}_{{\bf c}_j}$, $j=1,2,3$, have three distinct real roots and that ${\rm P}_{{\bf c}_j}$, $j=4,5$, have one real root.
The domain ${\mathcal M}'_+$ is connected and the function $ {\mathcal M}'_+ \ni {\bf c} \mapsto x'_{{\bf c}}\cdot x''_{{\bf c}}$ is differentiable and nowhere zero. Since $x'_{{\bf c}_1}\cdot x''_{{\bf c}_1}<0$, it follows that
${\bf c}\mapsto x'_{{\bf c}}\cdot x''_{{\bf c}}$ is strictly negative. Then, ${\rm P}_{{\bf c}}$ has three distinct real roots,
for every ${\bf c}\in {\mathcal M}'_+$. Similarly, ${\rm P}_{{\bf c}}$ has three distinct real roots, for every
${\bf c}\in {\mathcal M}''_+\cup {\mathcal M}'''_+$ and a unique real root for every
${\bf c}\in {\mathcal M}'_-\cup {\mathcal M}''_-$. This concludes the proof.
\end{proof}

\begin{figure}[t]\centering
\includegraphics[height=6cm,width=6cm]{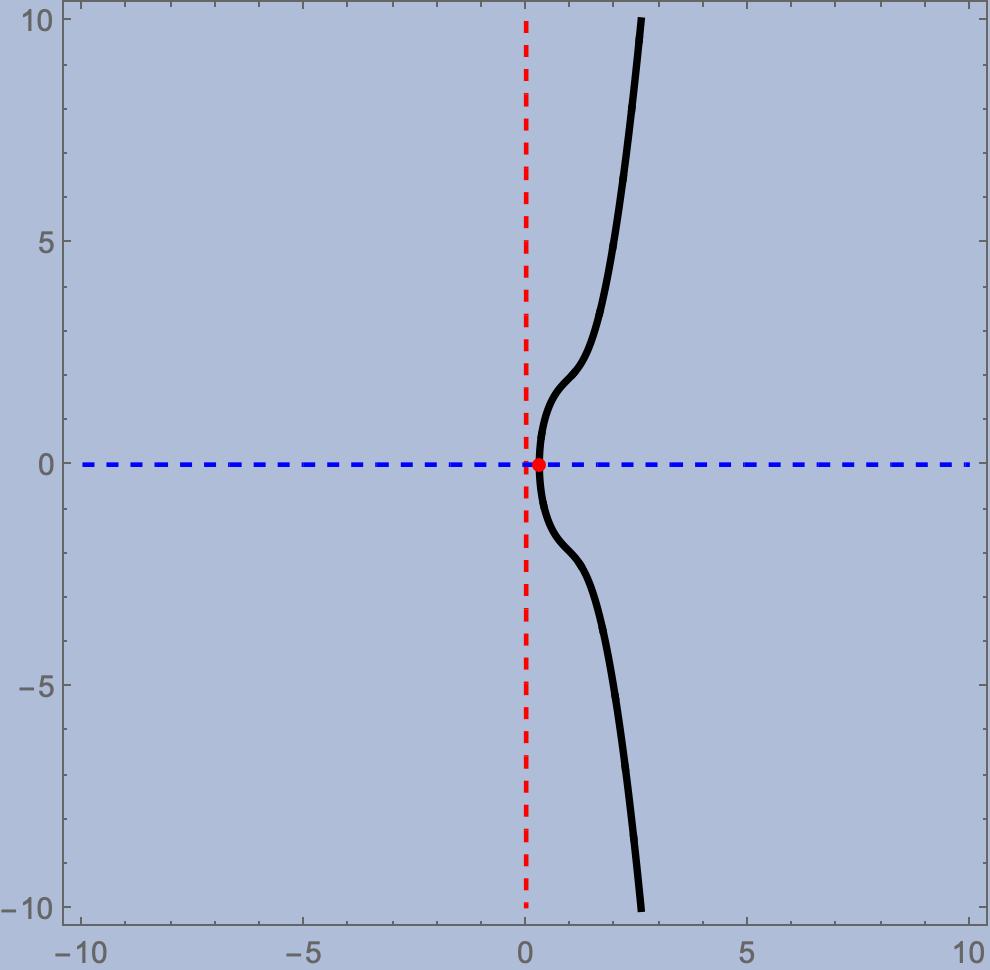}\qquad
\includegraphics[height=6cm,width=6cm]{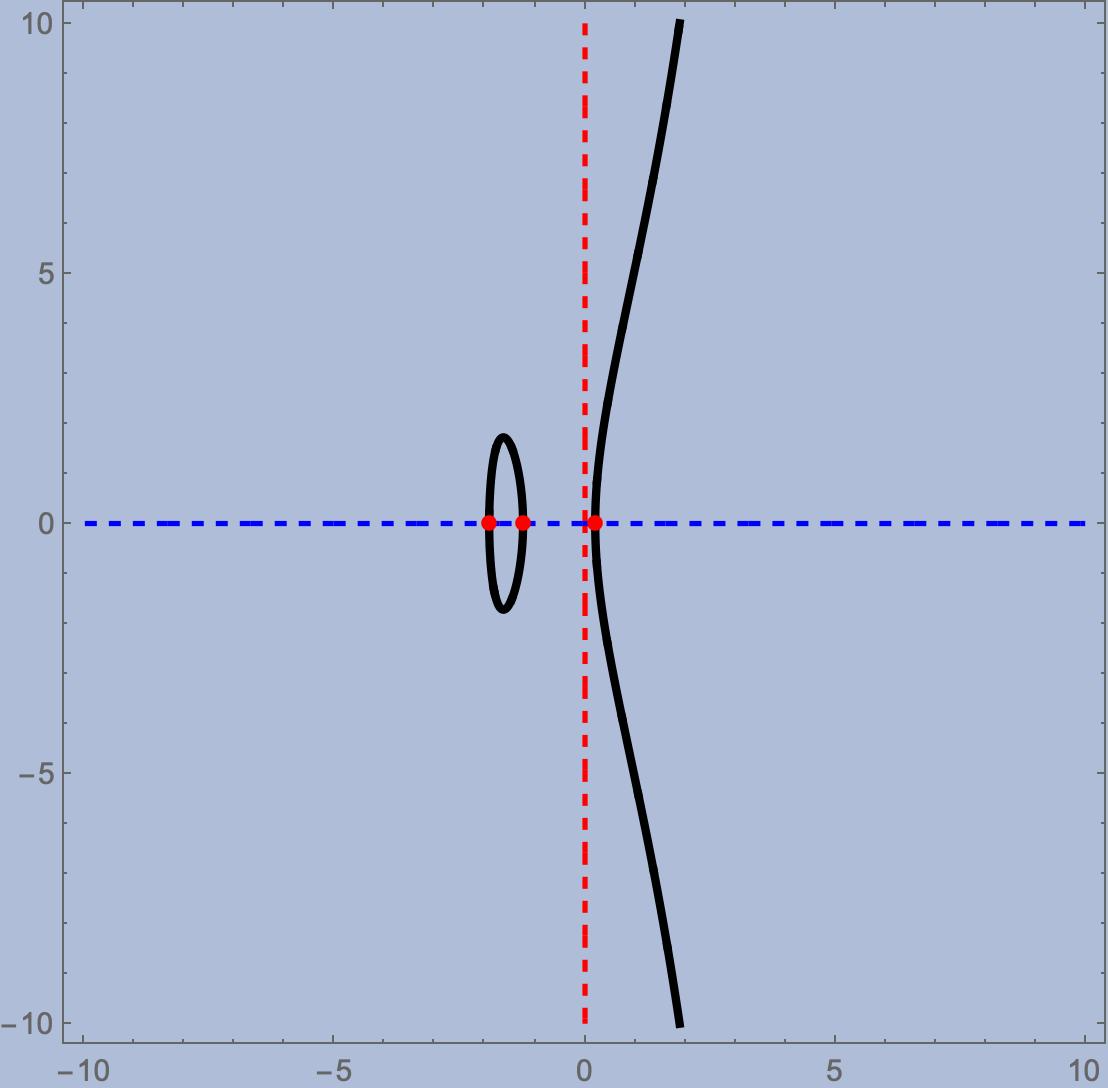}
\caption{On the left: the phase curve of ${\bf c}\in {\mathcal A}$. On the right: the phase curve of ${\bf c}\in {\mathcal B}$.}\label{FIG2}
\end{figure}

\begin{Remark}\label{remarksignrootsP}
The real roots of ${\rm P}_{{\bf c}_1}$ are $e_1=-2.44175<e_2=-0.9904<0<e_3=2.87645$ and those of
${\rm P}_{{\bf c}_2}$ are $e_1=-2.14118<e_2=-0.448099<0<e_3=0.0701938$. Instead, the roots of ${\rm P}_{{\bf c}_3}$ are
$0<e_1=0.12498<e_2=0.250656<2.15383$.
Since the product $e_2({\bf c}) e_3({\bf c})$ is a continuous function on the connected components
${\mathcal M}_+'$, ${\mathcal M}_+''$, and ${\mathcal M}_+'''$, we deduce that the lowest roots of ${\rm P}_{{\bf c}}$
are negative if
${\bf c}\in {\mathcal M}_+'\cup {\mathcal M}_+''$ and positive if ${\bf c}\in {\mathcal M}_+'''$.
\end{Remark}

\subsection{Phase curves and signatures}%\label{ss-phasecurves}

\begin{Definition}
Let $\Sigma_{{\bf c}}$ be the {real} algebraic curve defined by $y^2={\rm P}_{\bf c}(x)$.
We call $\Sigma_{{\bf c}}$ the \emph{phase curve} of ${\bf c}$.
\end{Definition}

If ${\bf c}\in \mathcal A\cup \mathcal B$, $\Sigma_{{\bf c}}$ is a smooth real cycle of a hyperelliptic curve of genus $2$.
If ${\bf c}\in \mathcal C$, and ${\bf c}\neq 0$, $\Sigma_{{\bf c}}$ is a singular real cycle of an elliptic curve.
If ${\bf c}=0$, $\Sigma_{{\bf c}}$ is a singular rational curve.
{The following facts can be easily verified:}
%inferred
%Let's note the following facts
\begin{itemize}\itemsep=0pt

\item if ${\bf c}\in {\mathcal A}$, $\Sigma_{{\bf c}}$ is connected, unbounded, and intersects the $Ox$-axis at $(e_1,0)$
(see Figure~\ref{FIG2});

\item if ${\bf c}\in {\mathcal B}$, $\Sigma_{{\bf c}}$ has two smooth connected components, one is compact and the other
is unbounded. Let $\Sigma'_{{\bf c}}$ be the compact connected component and $\Sigma''_{{\bf c}}$ be the noncompact one.
$\Sigma'_{{\bf c}}$ intersects the $Ox$-axis at $(e_1,0)$ and $(e_2,0)$, while $\Sigma''_{{\bf c}}$ intersects
the $Ox$-axis at $(e_3,0)$ (see Figure~\ref{FIG2});

\item if ${\bf c}\in {\mathcal C}$ and $c_1 \neq 0$ , $\Sigma_{{\bf c}}$ has a smooth, unbounded connected
component $\Sigma''_{{\bf c}}$ and an isolated singular point $(e_1,0)$, where $e_1=e_2$ is the double real root
of ${\rm P}_{\bf c}(x)$.
The unbounded connected component intersects the $Ox$-axis at $(e_3,0)$, where $e_3$ is the simple real root
of~${\rm P}_{\bf c}(x)$ (see Figure~\ref{FIG3}). If $c_1=0$ and $c_2\neq0$, $\Sigma_{{\bf c}}$ is connected, with
an ordinary double point (see Figure~\ref{FIG3}). If ${\bf c}=0$, $\Sigma_{{\bf c}}$ is connected with a cusp
at the origin (see Figure~\ref{FIG3}).
\end{itemize}

\begin{Definition}
Let $\gamma$ be a critical curve with nonconstant twist and modulus ${\bf c}$. Let ${\rm J}_{\gamma}\subset \R$ be
the maximal interval of definition of $\gamma$.
%
%With reference to~\eqref{cons-momentum-bis} and
%adapting to our context the terminology used in~\cite{COST,KRV,MNJMIV},
%we call
%
With reference to~\eqref{cons-momentum-bis}, we adapt to our context the terminology used in~\cite{COST,KRV,MNJMIV}
and call
\[
\sigma_{\gamma} \colon \ {\rm J}_{\gamma} \to \R^2, \qquad s\mapsto \big(\tau(s),\sqrt{3/2} \tau(s){\tau'}(s)\big)
\]
the \emph{signature} of $\gamma$.
\end{Definition}

\begin{Remark}
From the Poincar\'e--Bendixson theorem, it follows that the twist of $\gamma$ is periodic if and only if
$\sigma_{\gamma}({\rm J}_{\gamma})$ is compact. Observing that $\sigma_{\gamma}({\rm J}_{\gamma})$ is
one of the 1-dimensional connected components of $\Sigma_{{\bf c}}$, we can conclude that the twist is a periodic function if and only if
${\bf c}\in {\mathcal B}$ and $\sigma_{\gamma}({\rm J}_{\gamma})=\Sigma'_{{\bf c}}$.
\end{Remark}

\begin{figure}[t]\centering
\includegraphics[height=4cm,width=4cm]{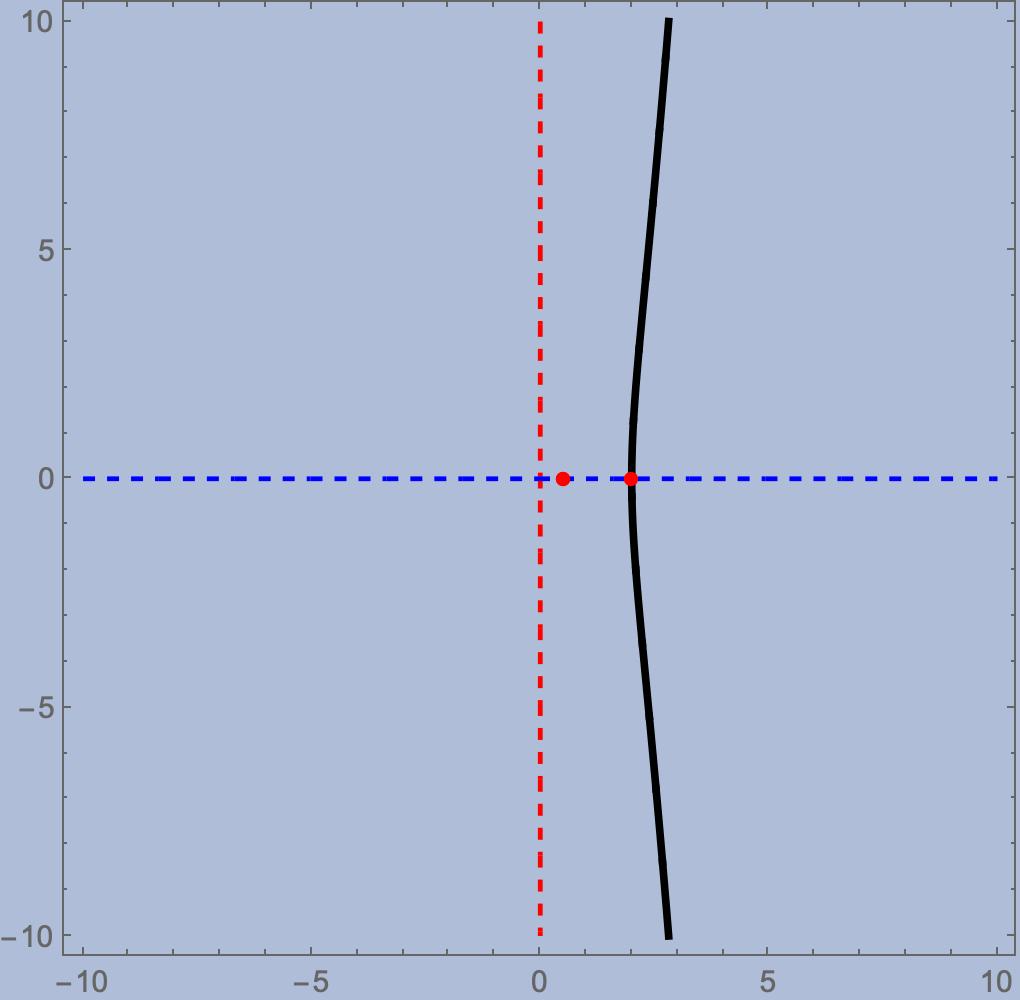}\qquad
\includegraphics[height=4cm,width=4cm]{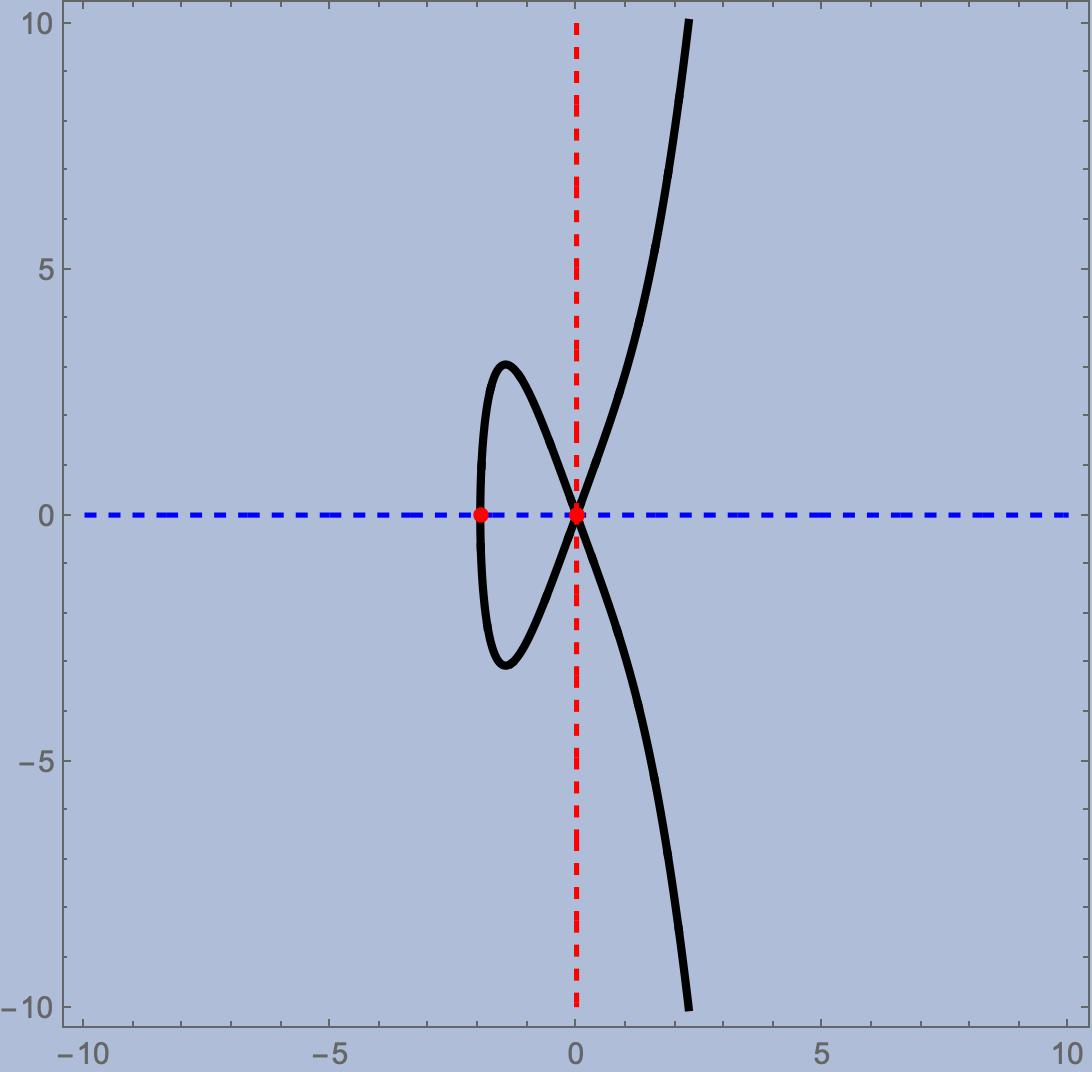}\qquad
\includegraphics[height=4cm,width=4cm]{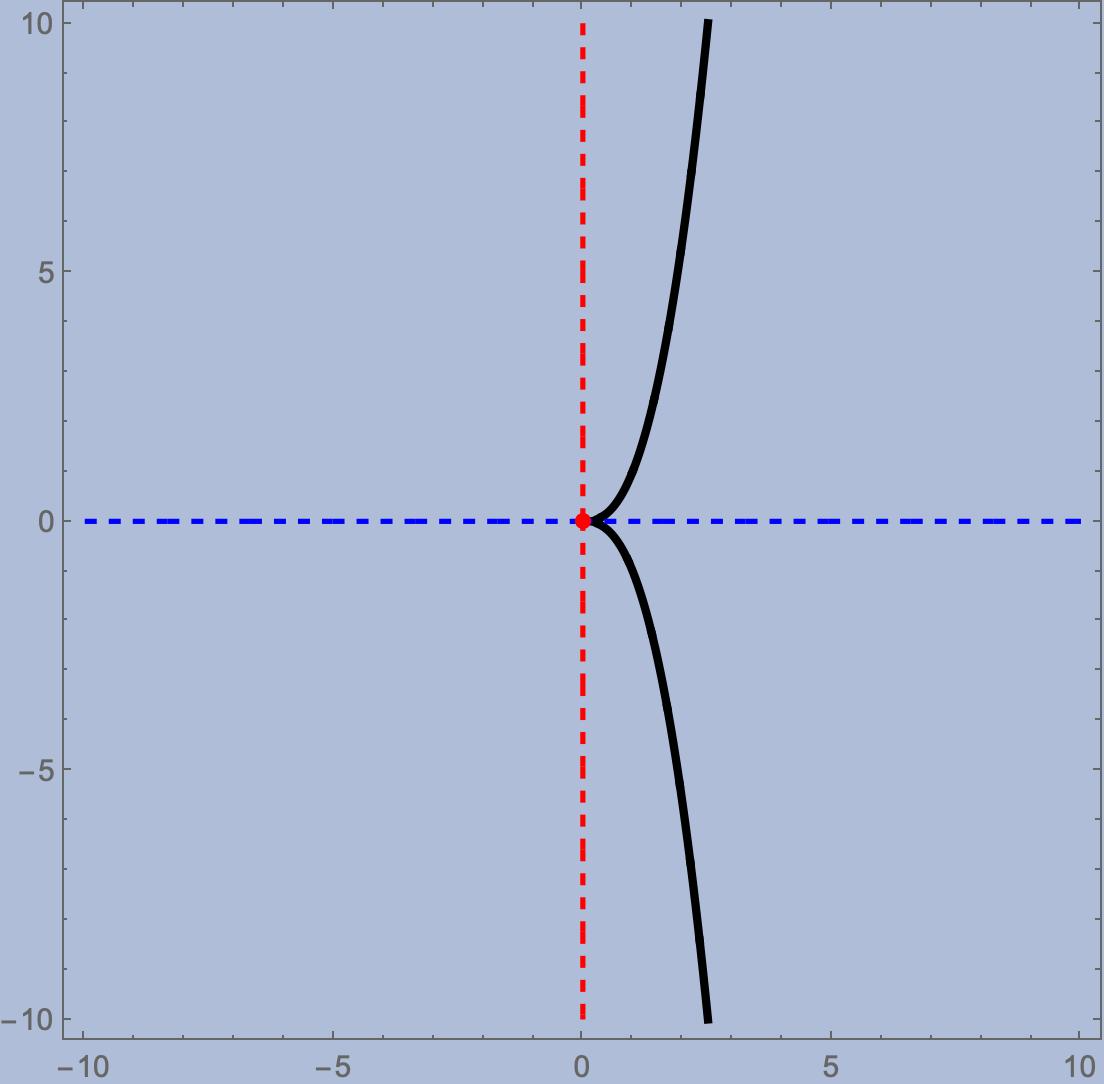}
\caption{On the left: the phase curve of ${\bf c}\in {\mathcal C}$, $c_1\neq 0$. On the center:
the phase curve of ${\bf c}\in {\mathcal C}$, $c_1=0$ and $c_2\neq 0$. On the right: the phase curve
of ${\bf c}=(0,0)$.}\label{FIG3}
\end{figure}

\begin{Definition}
A critical curve $\gamma$ with modulus ${\bf c}$ is said to be of type ${\mathcal B}'$ if ${\bf c}\in {\mathcal B}$
and $\sigma_{\gamma}({\rm J}_{\gamma})=\Sigma'_{{\bf c}}$; it is said to be of type ${\mathcal B}''$ if ${\bf c}\in {\mathcal B}$
and $\sigma_{\gamma}({\rm J}_{\gamma})=\Sigma''_{{\bf c}}$.
\end{Definition}

\subsection{The twist of a critical curve}%\label{ss:twist}

\subsubsection[The twist of a critical curve of type A]{The twist of a critical curve of type $\boldsymbol{\mathcal A}$}%\label{sss:twistA}

Let $\gamma$ be a critical curve of type ${\mathcal A}$, i.e., with modulus ${\bf c} \in {\mathcal A}$.
%of type ${\mathcal A}$.
Then ${\rm P}_{{\bf c}}$ has a unique real root $e_1$. The polynomial ${\rm P}_{{\bf c}}(x)$ is positive if $x>e_1$
and is negative if $x<e_1$.
Since ${\rm P}_{{\bf c}}(0)=-27c_1^2/2<0$, the root is positive.
Let $\omega_{{\bf c}}>0$ be the improper hyperelliptic integral of the first kind defined by
\[
 \omega_{{\bf c}} = \sqrt{\frac{3}{2}}\int_{e_1}^{+\infty}\frac{\tau {\rm d}\tau}{\sqrt{{\rm P}_{{\bf c}}(\tau)}}>0.
\]
The incomplete hyperelliptic integral
\[
 h_{{\bf c}}(\tau) = \sqrt{\frac{3}{2}}\int_{e_1}^{\tau}\frac{u {\rm d}u}{\sqrt{{\rm P}_{{\bf c}}(u)}},\qquad u\ge e_1
\]
is a strictly increasing diffeomorphism of $[e_1,+\infty)$ onto $[0,\omega_{{\bf c}})$ (see Figure~\ref{FIG4}).
The twist is the unique even function
$\tau_{{\bf c}}\colon (-\omega_{{\bf c}},\omega_{{\bf c}})\to \R$, such that $\tau_{{\bf c}}=h_{{\bf c}}^{-1}$ on $[0,\omega_{{\bf c}})$.
The maximal domain of definition is ${\rm J}_{{\bf c}}=(-\omega_{{\bf c}},\omega_{{\bf c}})$.
$\tau_{{\bf c}}$ is strictly positive, with vertical asymptotes as $s\to \mp \omega_{{\bf c}}^{\pm}$ (see Figure~\ref{FIG4}).
Note that $\tau_{{\bf c}}$ is the solution of the Cauchy problem
\[
 {\tau}''=\tau^2-9c_1\tau^{-2}\big(1-c_1\tau^{-1}\big),\qquad \tau(0)=e_1,\qquad {\tau'}(0)=0.
\]

\begin{figure}[t]\centering
\includegraphics[height=6cm,width=6cm]{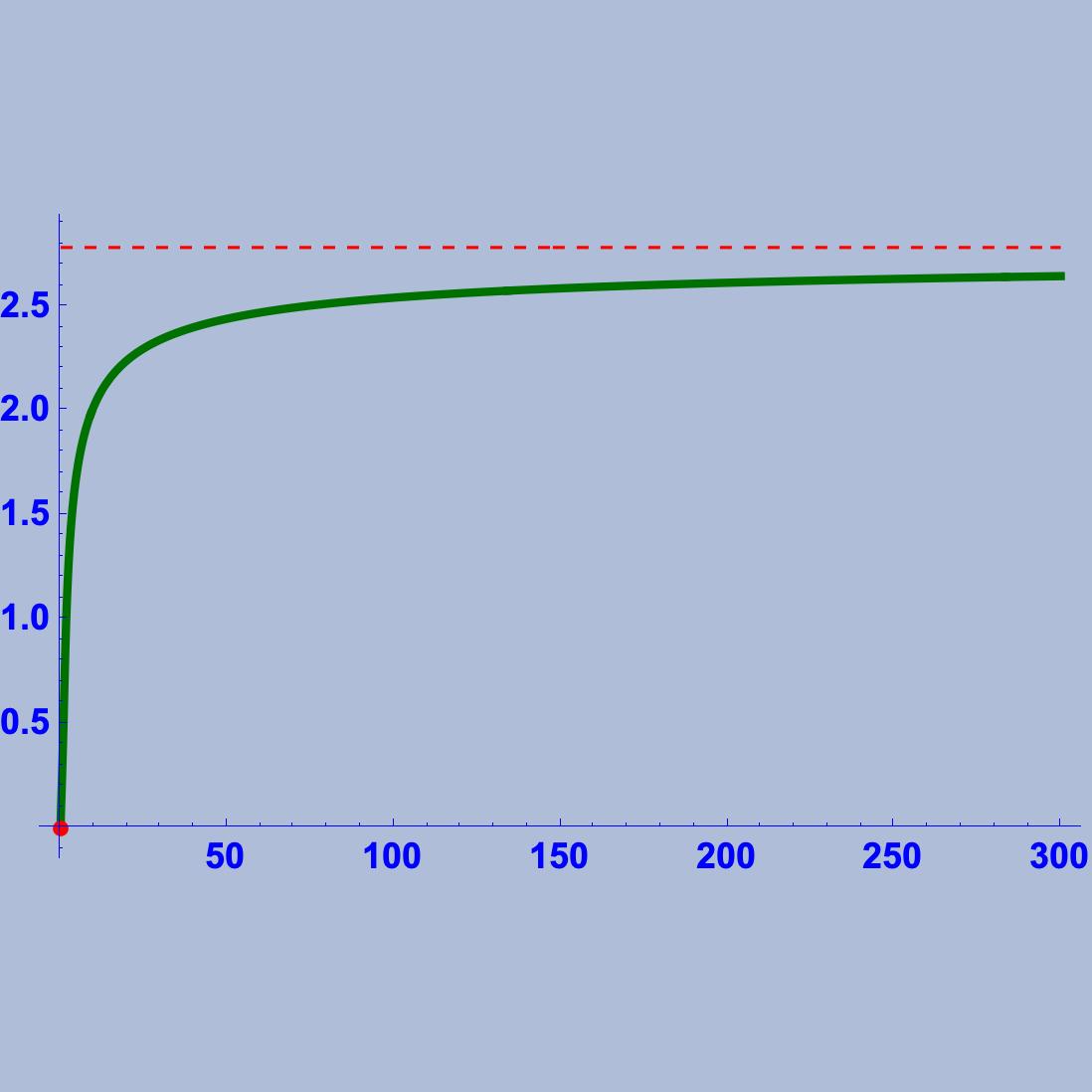}\qquad
\includegraphics[height=6cm,width=6cm]{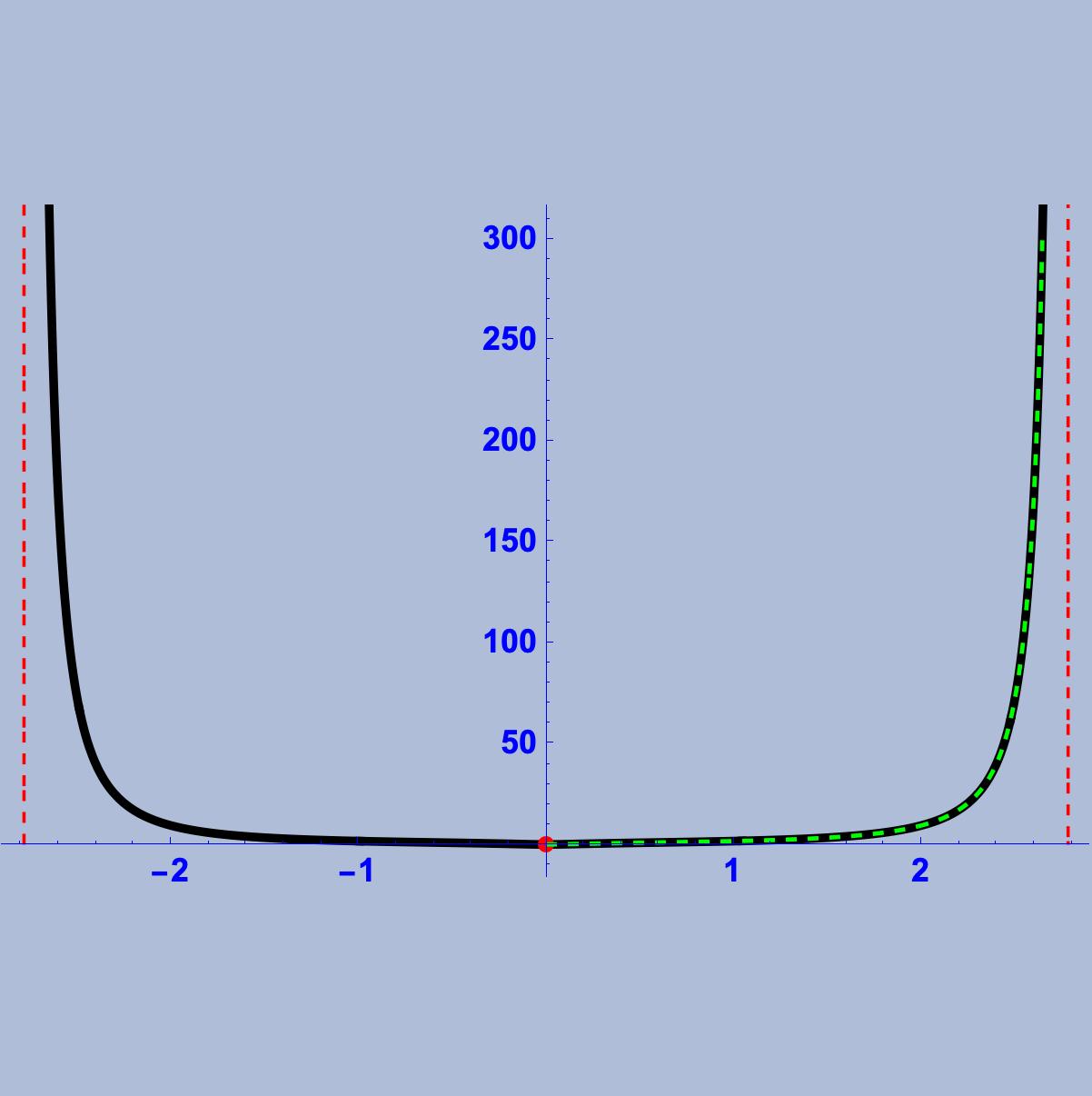}
\caption{On the left: the graph of the function $h_{{\bf c}}$, ${\bf c}=(1/2,-4.8)\in {\mathcal A}$. The red line is the horizontal asymptote $y=e_1$. On the right: the graph of the twist. The red lines are the vertical asymptotes $x=\pm \omega_{{\bf c}}$.}\label{FIG4}
\end{figure}

\subsubsection[The twist of a critical curve of type B']{The twist of a critical curve of type $\boldsymbol{{\mathcal B}'}$}%\label{sss:twistB'}

Let $e_1<e_2<e_3$ be the simple real roots of ${\rm P}_{{\bf c}}$. The highest root $e_3$ is positive.
The lower roots~$e_1$ and~$e_2$ are either both negative or both positive and ${\rm P}_{{\bf c}}$ is positive on $(e_1,e_2)$.
Let $\omega_{{\bf c}}>0$ be the complete hyperelliptic integral of the first kind
\begin{equation}\label{halfpB1}
 \omega_{{\bf c}} ={\rm sign}(e_1) \sqrt{\frac{3}{2}}\int_{e_2}^{e_1}\frac{\tau {\rm d}\tau}{\sqrt{{\rm P}_{{\bf c}}(\tau)}}>0.
 \end{equation}
Let $h_{{\bf c}}$ be the incomplete hyperelliptic integrals of the first kind
\[
 h_{{\bf c}}(\tau) =
\begin{cases} \displaystyle \sqrt{\frac{3}{2}}\int_{e_2}^{\tau}\frac{u{\rm d}u}{\sqrt{{\rm P}_{{\bf c}}(u)}}, & \tau\in [e_1,e_2], \ e_1<e_2<0,\\
 \displaystyle \sqrt{\frac{3}{2}}\int_{e_1}^{\tau}\frac{u {\rm d}u}{\sqrt{{\rm P}_{{\bf c}}(u)}},& \tau\in [e_1,e_2],\  0<e_1<e_2.
\end{cases}
 \]
The function $h_{{\bf c}}$ is a diffeomorphism of $[e_1,e_2]$ onto $[0,\omega_{{\bf c}}]$, strictly decreasing if $e_1<e_2<0$ and strictly
increasing if $0<e_1<e_2$ (see Figure~\ref{FIG5}). The twist $\tau_{{\bf c}}$ is the even periodic function with least period $2\omega_{{\bf c}}$, {obtained by extending periodically the function
$\tau(s) = h_{{\bf c}}^{-1}(s)$ defined on $[0,\omega_{{\bf c}}]$ and on $[-\omega_{{\bf c}},0]$, respectively.}
\begin{itemize}\itemsep=0pt
\item If $e_1<e_2<0$, then $\tau_{{\bf c}}$ is strictly negative with minimum value $e_1$ and maximum value $e_2$, attained, respectively, at $s\equiv \omega_{{\bf c}}\mod 2\omega_{{\bf c}}$ and at $s\equiv 0\mod 2\omega_{{\bf c}}$ (see Figure~\ref{FIG5}).
\item If $0<e_1<e_2$, then $\tau_{{\bf c}}$ is strictly positive, with minimum value $e_1$ and maximum value $e_2$, attained, respectively, {at} $s\equiv 0\mod 2\omega_{{\bf c}}$ and at $s\equiv \omega_{{\bf c}}\mod 2\omega_{{\bf c}}$.
\end{itemize}
Observe that $\tau_{{\bf c}}$ is the solution of the Cauchy problem
\begin{gather}\label{twistnB1'}
\begin{split}
(i)\quad &{\tau''}=\tau^2-9c_1\tau^{-2}\big(1-c_1\tau^{-1}\big),\qquad \tau(0)=e_2,\qquad {\tau'}(0)=0 \qquad {\rm if}\ e_1<e_2<0,\\
(ii)\quad &{\tau''}=\tau^2-9c_1\tau^{-2}\big(1-c_1\tau^{-1}\big),\qquad \tau(0)=e_1,\qquad {\tau'}(0)=0 \qquad {\rm if}\ 0<e_1<e_2.
 \end{split}
 \end{gather}

\begin{figure}[t]\centering
\includegraphics[height=6cm,width=6cm]{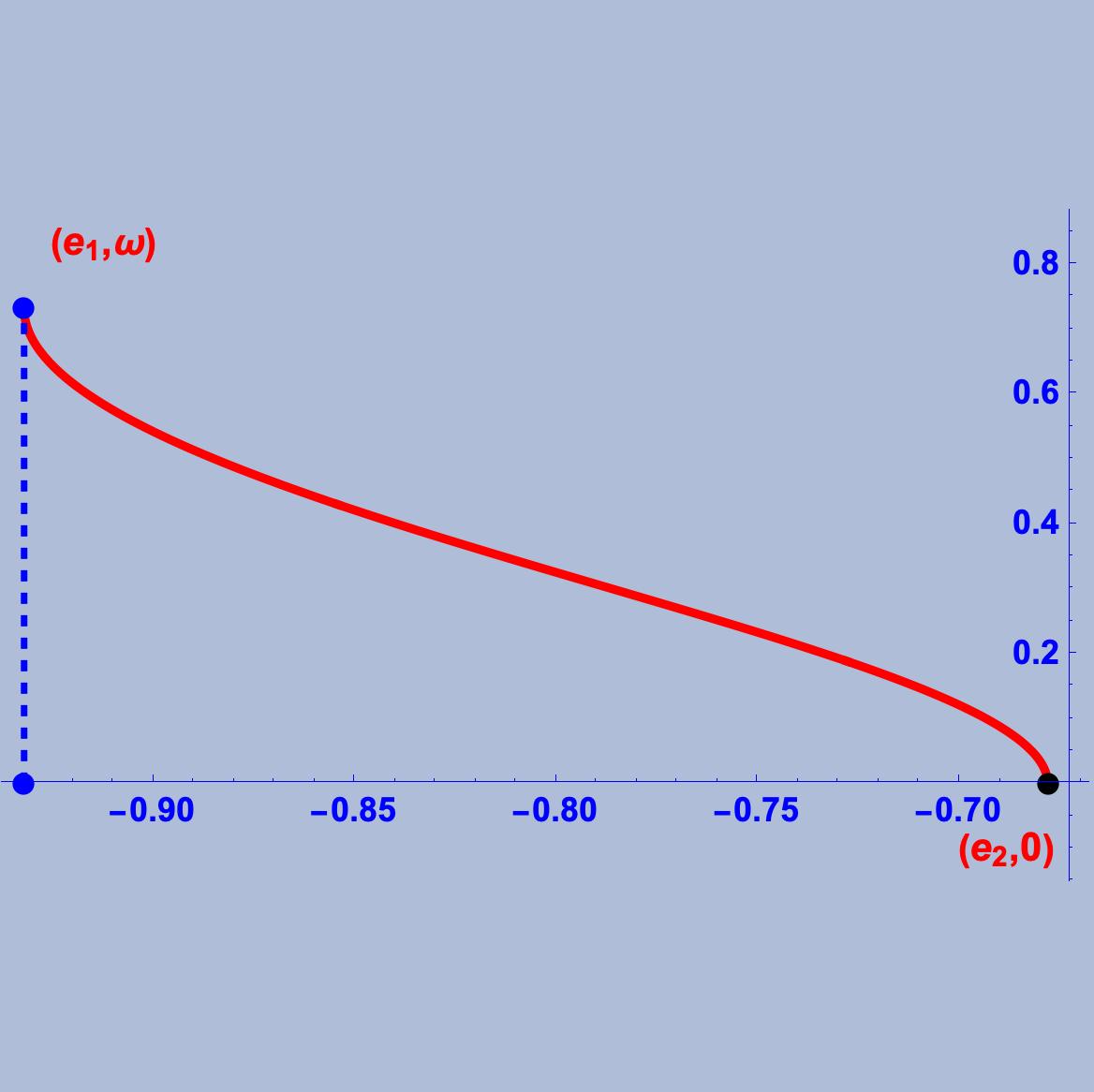}\qquad
\includegraphics[height=6cm,width=6cm]{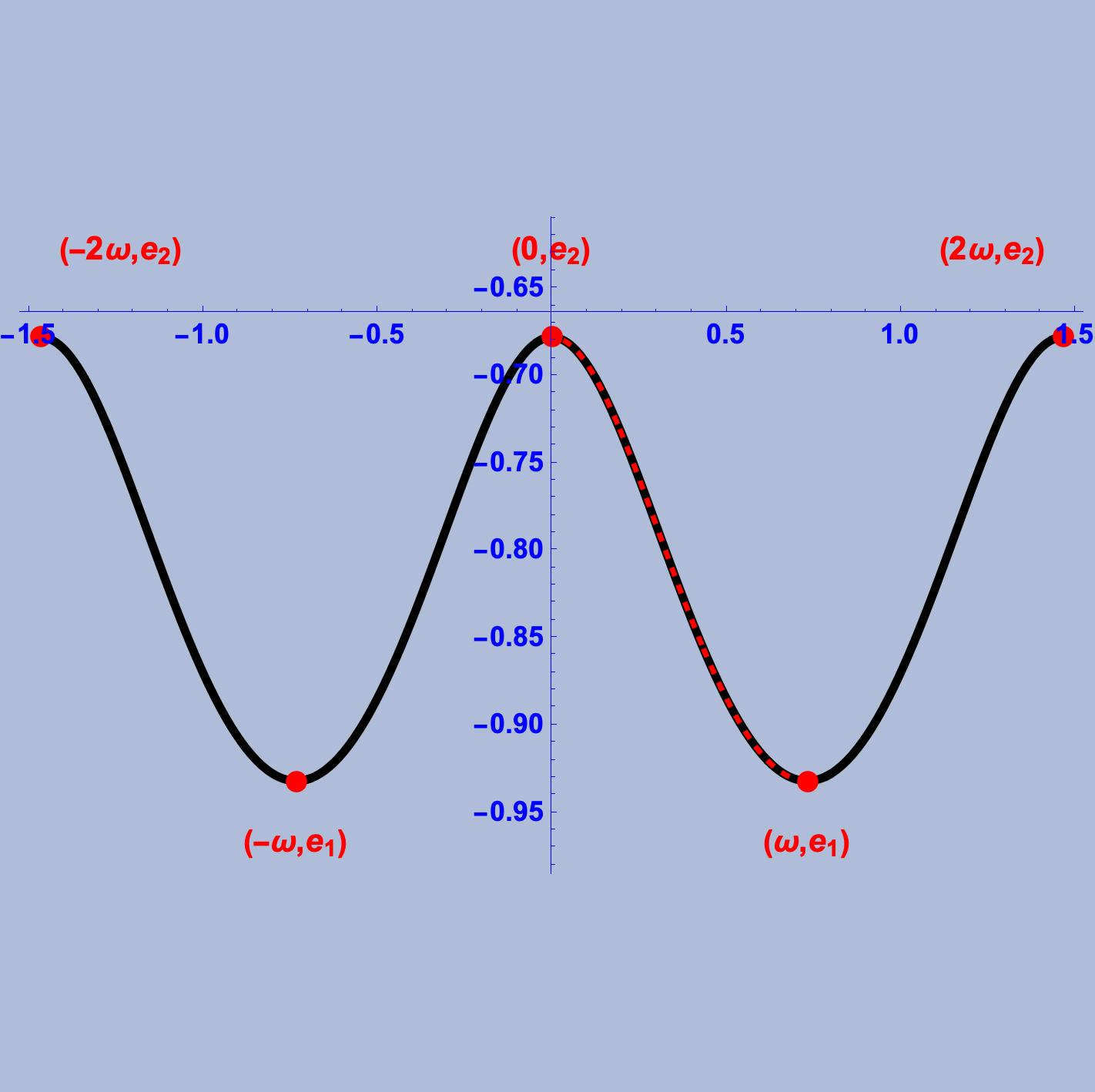}
\caption{On the left: the graph of the function $h_{{\bf c}}$ for a critical curve of type ${\mathcal B}'$, with modulus ${\bf c}\approx(-0.828424,-8.349417)\in {\mathcal B}$. On the right: the graph of the twist, an even periodic function with least period $2\omega_{{\bf c}}$. The lowest roots $e_1$ and $e_2$ are negative.}\label{FIG5}
\end{figure}

\subsubsection[The twist of a critical curve of type B'']{The twist of a critical curve of type $\boldsymbol{{\mathcal B}''}$}%\label{sss:twistB''}

The twist of a critical curve of type ${\mathcal B}''$ can be constructed as in the case of a critical curve of
type ${\mathcal A}$.
More precisely, let $e_3>0$ be the highest real root of ${\rm P}_{{\bf c}}$ and $\omega_{{\bf c}} $ be the
improper hyperelliptic integral of the first kind given by
\[
 \omega_{{\bf c}} = \sqrt{\frac{3}{2}}\int_{e_3}^{+\infty}\frac{\tau {\rm d}\tau}{\sqrt{{\rm P}_{{\bf c}}(\tau)}}>0.
\]
Let $h_{{\bf c}}(\tau)$ be the incomplete {hyperelliptic} integral
\[
 h_{{\bf c}}(\tau) = \sqrt{\frac{3}{2}}\int_{e_3}^{\tau}\frac{u {\rm d}u}{\sqrt{{\rm P}_{{\bf c}}(u)}},\qquad \tau \ge e_3.
\]
Then, $h_{{\bf c}}$ is a strictly increasing diffeomorphism of $[e_3,+\infty)$ onto $[0,\omega_{{\bf c}})$.
The twist is the unique even function
$\tau_{{\bf c}}\colon(-\omega_{{\bf c}},\omega_{{\bf c}})\to \R$, such that $\tau_{{\bf c}}=h_{{\bf c}}^{-1}$ on
%s\in
$[0,\omega_{{\bf c}})$.
The maximal interval of definition of $\tau_{{\bf c}}$ is ${\rm J}_{{\bf c}}=(-\omega_{{\bf c}},\omega_{{\bf c}})$.
The function $\tau_{{\bf c}}$ is positive, with vertical asymptotes as
$s\to \mp \omega_{{\bf c}}^{\pm}$, and
%$\tau_{{\bf c}}$
is the solution of the Cauchy problem
\[
 {\tau''}=\tau^2-9c_1\tau^{-2}\big(1-c_1\tau^{-1}\big),\qquad \tau(0)=e_3,\qquad {\tau'}(0)=0.
\]

\subsubsection[The twist of a critical curve of type C with c\_1 neq 0]{The twist of a critical curve of type $\boldsymbol{\mathcal C}$ with $\boldsymbol{c_1\neq 0}$}%\label{sss:twistC1}

The twist of a critical curve of type ${\mathcal C}$,
with $c_1\neq 0$, can be constructed as for curves of types ${\mathcal A}$ or ${\mathcal B}''$.
Let $e_3>0$ be simple real root of
${\rm P}_{{\bf c}}$ and $\omega_{{\bf c}}$ be the improper {elliptic} integral of the first kind
\[
 \omega_{{\bf c}} = \sqrt{\frac{3}{2}}\int_{e_3}^{+\infty}\frac{\tau {\rm d}\tau}{\sqrt{{\rm P}_{{\bf c}}(\tau)}}>0.
\]
Let $h_{{\bf c}}(\tau)$ be the incomplete elliptic integral
\[
 h_{{\bf c}}(\tau) = \sqrt{\frac{3}{2}}\int_{e_3}^{\tau}\frac{u {\rm d}u}{\sqrt{{\rm P}_{{\bf c}}(u)}},\qquad \tau\ge e_3.
\]
Then, $h_{{\bf c}}$ is a strictly increasing diffeomorphism of $[e_3,+\infty)$ onto $[0,\omega_{{\bf c}})$.
The twist $\tau_{{\bf c}}$ is the unique even function
$\tau_{{\bf c}}\colon(-\omega_{{\bf c}},\omega_{{\bf c}})\to \R$, such that $\tau_{{\bf c}}=h_{{\bf c}}^{-1}$ on
%$s\in [0,\omega_{{\bf c}})$.
$[0,\omega_{{\bf c}})$.
The maximal interval of definition of $\tau_{{\bf c}}$ is ${\rm J}_{{\bf c}}=(-\omega_{{\bf c}},\omega_{{\bf c}})$.
The twist is positive, with vertical asymptotes as $s\to \mp\omega_{{\bf c}}^{\pm}$. Note that $\tau_{{\bf c}}$ is the solution of the Cauchy problem
\[
{\tau''}=\tau^2-9c_1\tau^{-2}\big(1-c_1\tau^{-1}\big),\qquad \tau(0)=e_3,\qquad {\tau'}(0)=0.
\]

\subsubsection[The twist of a critical curve with c\_1= 0]{The twist of a critical curve with $\boldsymbol{c_1= 0}$}%\label{sss:twistC1=0}

If $c_1=0$, the bending vanishes identically and the twist is a solution of the second order ODE ${\tau''}-\tau^2=0$.
Then,
\[
\tau(s)=\sqrt[3]{6}\wp\bigg(\frac{s+a}{\sqrt[3]{6}}\bigg|0,g_3\bigg),\qquad g_3=-\sqrt[3]{2/243} c_2, \qquad \kappa(s)=0,
\]
where $a$ is an unessential constant and $\wp(-,g_2,g_3)$ is the Weierstrass function with invariants~$g_2$,~$g_3$.

\begin{figure}[t]\centering
\includegraphics[height=6cm,width=7cm]{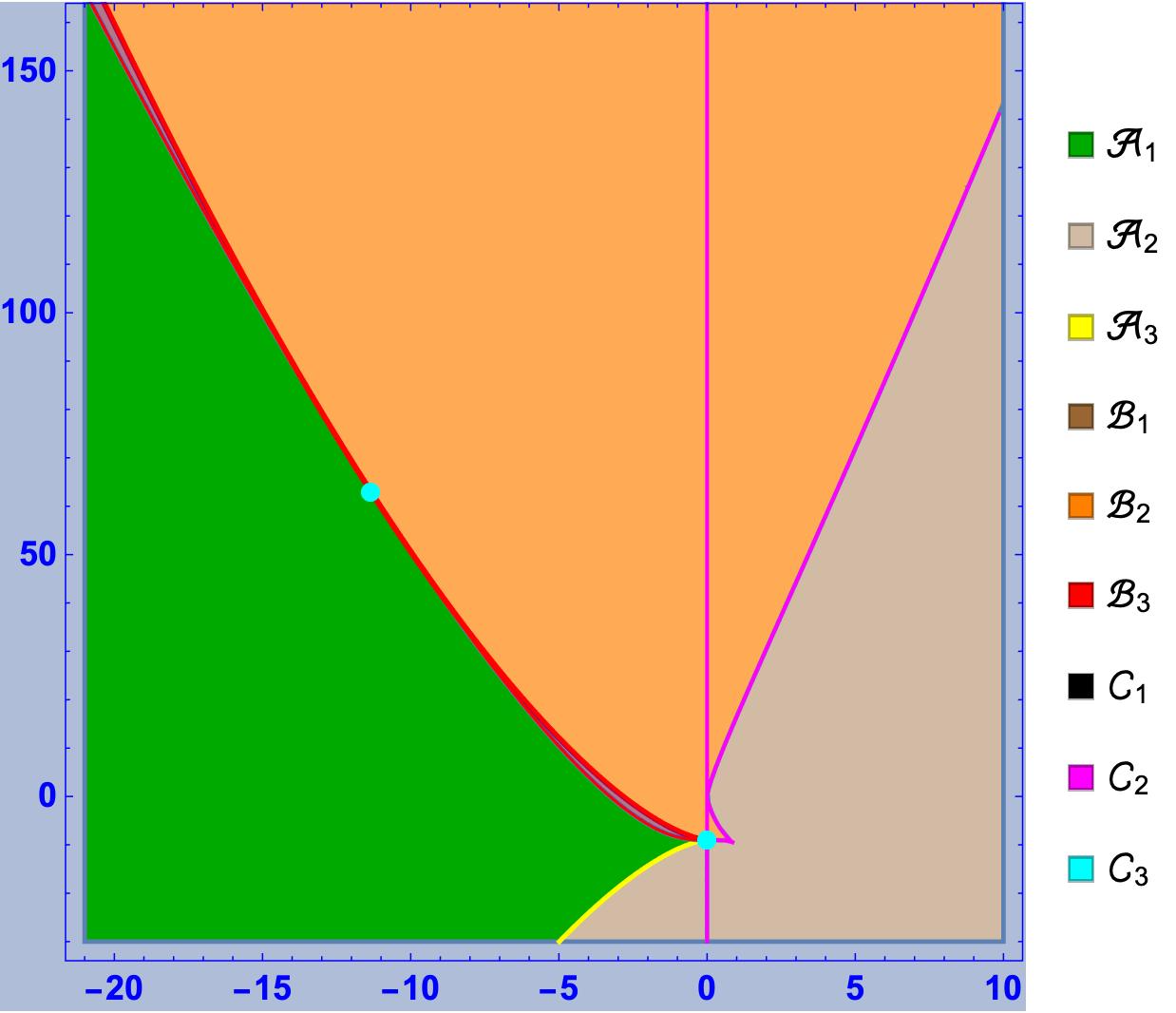}
\caption{The nine regions ${\mathcal A}_j$, ${\mathcal B}_j$ and ${\mathcal C}_j$, $j=1,2,3$.}\label{FIG6}
\end{figure}

\subsection[Orbit types and the twelve classes of critical curves with nonconstant twist]{Orbit types and the twelve classes of critical curves\\ with nonconstant twist}%\label{ss:classification}

The moduli of the critical curves can be classified depending on the properties of the eigenvalues of the momenta.

\begin{Definition}\label{orbittype}
For $\mathbf{c} = (c_1,c_2) \in \R^2$, let $\Delta_1({\bf c})=-27\big(32c_1^3+9(9+c_2)^2\big)$ be the
discriminant of the cubic polynomial ${\rm Q}_{{\bf c}}$ (cf.~\eqref{Cubic_Poly}).
We say that
%a modulus
 ${\bf c}\in \R^2$ is
\begin{enumerate}\itemsep=0pt
\item Of \textit{orbit type} 1 (in symbols, ${\bf c}\in {\rm OT}_1$) if $\Delta_1({\bf c})>0$;
%$\Delta_1({\bf c})<0$;
the momentum of a critical curve with modulus ${\bf c}\in {\rm OT}_1$ has three distinct real eigenvalues:
$\lambda_1=-(\lambda_2+\lambda_3)<0<\lambda_2<\lambda_3$.
\item
%A modulus ${\bf c}$ is said to be of
Of \textit{orbit type} 2 (in symbols, ${\bf c}\in {\rm OT}_2$) if $\Delta_1({\bf c})<0$;
%$\Delta_1({\bf c})>0$;
%if do,
the momentum of a critical curve
with modulus ${\bf c}\in {\rm OT}_2$ has a real eigenvalue $\lambda_1$ and two complex conjugate roots: $\lambda_2$,
with positive imaginary part, and $\lambda_{3}=\overline{\lambda}_2$.
%
%A modulus ${\bf c}$ is said to be of
\item Of \textit{orbit type} 3 (in symbols, ${\bf c}\in {\rm OT}_3$) if $\Delta_1({\bf c})=0$;
the momentum of a critical curve with modulus ${\bf c}\in {\rm OT}_3$ has an eigenvalue with algebraic multiplicity
greater than one ($>1$).
\end{enumerate}
\end{Definition}

Correspondingly, $\R^2$ is partitioned into nine regions (see Figure~\ref{FIG6}):
\[
{\mathcal A}_j={\mathcal A}\cap {\rm OT}_j,\qquad {\mathcal B}_j={\mathcal B}\cap {\rm OT}_j,\qquad
 {\mathcal C}_j={\mathcal C}\cap {\rm OT}_j,\qquad j=1,2,3.
\]

\begin{Definition}
Let $\gamma$ be a critical curve with modulus ${\bf c}$ and $j\in \{1,2,3\}$.
We say that $\gamma$ is of \textit{type} ${\mathcal A}_j$ if ${\bf c}\in {\mathcal A}_j$;
of \textit{type} ${\mathcal B}'_j$ if ${\bf c}\in {\mathcal B}_j$
and the image of its signature $\sigma_{\gamma}$ is compact;
of \textit{type}~${\mathcal B}''_j$ if ${\bf c}\in {\mathcal B}_j$ and the image of $\sigma_{\gamma}$ is unbounded;
and
%We say that $\gamma$ is
of \textit{type} ${\mathcal C}_j$ if ${\bf c}\in {\mathcal C}_j$, $j=1,2,3$.
\end{Definition}

\begin{Remark}
The only critical curves with periodic twist are those of the types ${\mathcal B}'_j$, $j=1,2,3$.
Consequently, critical curves of the other types cannot be closed.
 \end{Remark}

\begin{Remark}\label{r:connectedcomponents}
${\mathcal B}_1$ lies in the half-plane $\{(c_1,c_2) \mid c_1<0\}$;
it is bounded below by $\Xi'=\{{\bf c}\in \Xi \mid c_1< 0\}$ and above by
$\Delta'=\big\{{\bf c}\in \R^2 \mid \Delta_1({\bf c})=0,\, c_2> -9\big\}$.
The curves $\Xi'$ and $\Delta'$ intersect each other tangentially
at ${\bf c}'=(c'_1,c'_2)\approx (-11.339754, 63.004420)$ (see Figure~\ref{FIG7}).
Thus, ${\mathcal B}_1$ has two connected components:
\[
 {\mathcal B}_1^-=\{{\bf c}\in {\mathcal B}_1 \mid c_1\in (c'_1,-9) \}, \qquad
 {\mathcal B}_1^+=\{{\bf c}\in {\mathcal B}_1 \mid c_1 > c'_1 \}.
\]
Referring to Remark~\ref{remarkseparatrix}, $\Xi'$ is parametrized by the restriction of $\widetilde{\xi}$ to the interval
$\hat{{\rm J}}_{\xi}=(\pi/2,\pi +\arctan(n_*))$. Let $t'$ be the point of
$\hat{{\rm J}}_{\xi}$ such that $\widetilde{\xi}(t')={\bf c}'$, ($t'\approx 2.3008$). Put $\hat{{\rm J}}_{\xi}^-=(\pi/2,t')$ and
 $\hat{{\rm J}}_{\xi}^+=(t',\pi +\arctan(n_*))$. The restriction of $\widetilde{\xi}$ to $\hat{{\rm J}}_{\xi}^-$ is a parametrization of
 $\Xi_-=\{{\bf c}\in \Xi'\mid c_1\in (c'_1,0)\}$ and the restriction to $\hat{{\rm J}}_{\xi}^+$ is a parametrization of
 $\Xi_+=\{{\bf c}\in \Xi'\mid c_1<c'_1\}$.
 Consequently, ${\mathcal B}_1^{\pm}$ are parametrized by
\begin{equation}\label{parconc}
 \psi_{\pm} \colon \ K_{\pm} \ni (t,s)\longmapsto \big(\widetilde{\xi}(t)-p(t)\big)s+p(t),
 \end{equation}
where $ K_{\pm}$ are the rectangles $\hat{{\rm J}}_{\xi}^{\pm}\times (0,1)$ and
\[
 p(t)=\bigg(\widetilde{\xi}_1(t),\frac{1}{3}\bigg(4\sqrt{-2\widetilde{\xi}_1(t)^3}-27\bigg)\bigg).
\]
\end{Remark}

\begin{figure}[t]\centering
\includegraphics[height=6cm,width=6cm]{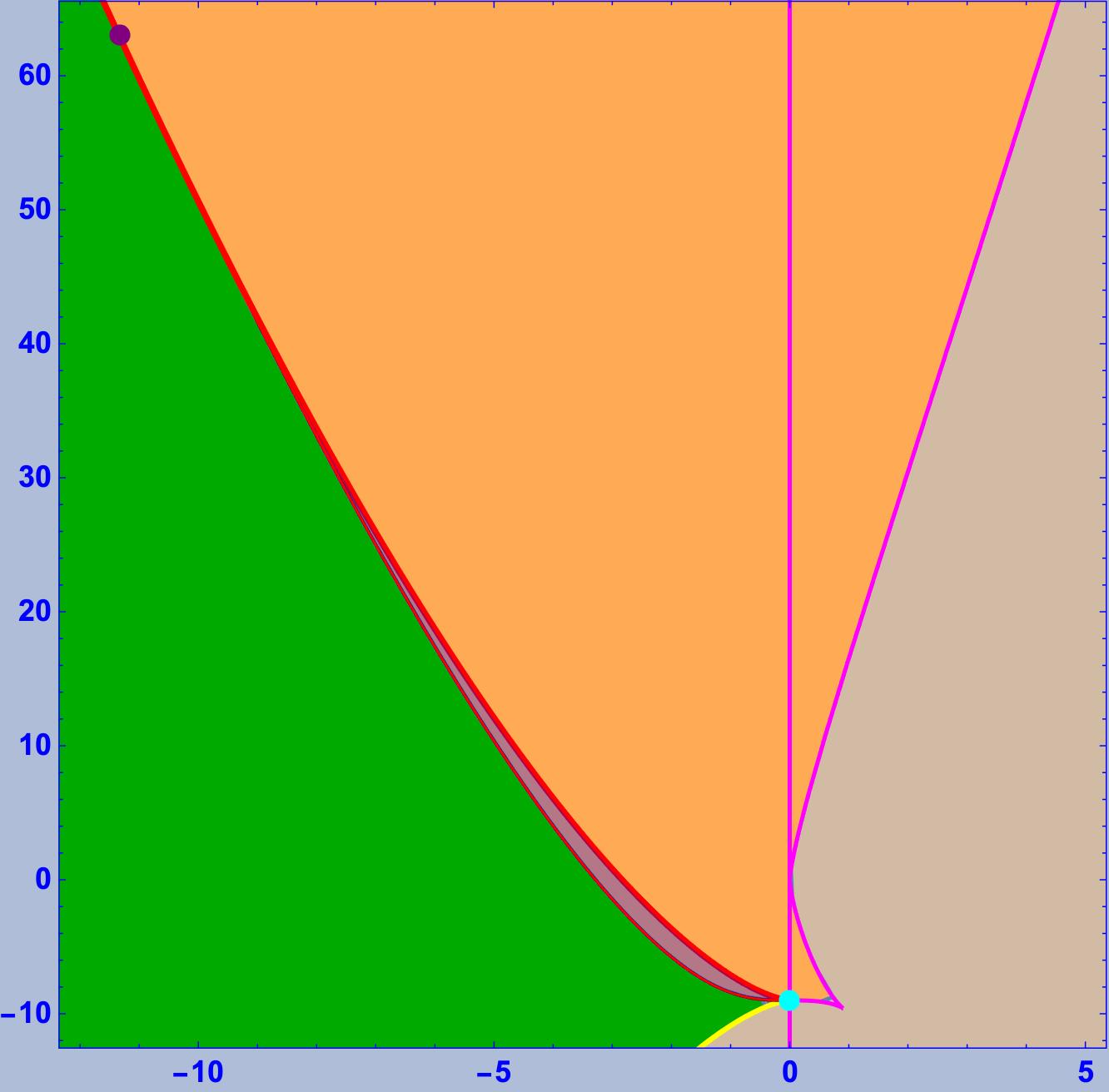}\qquad
\includegraphics[height=6cm,width=6cm]{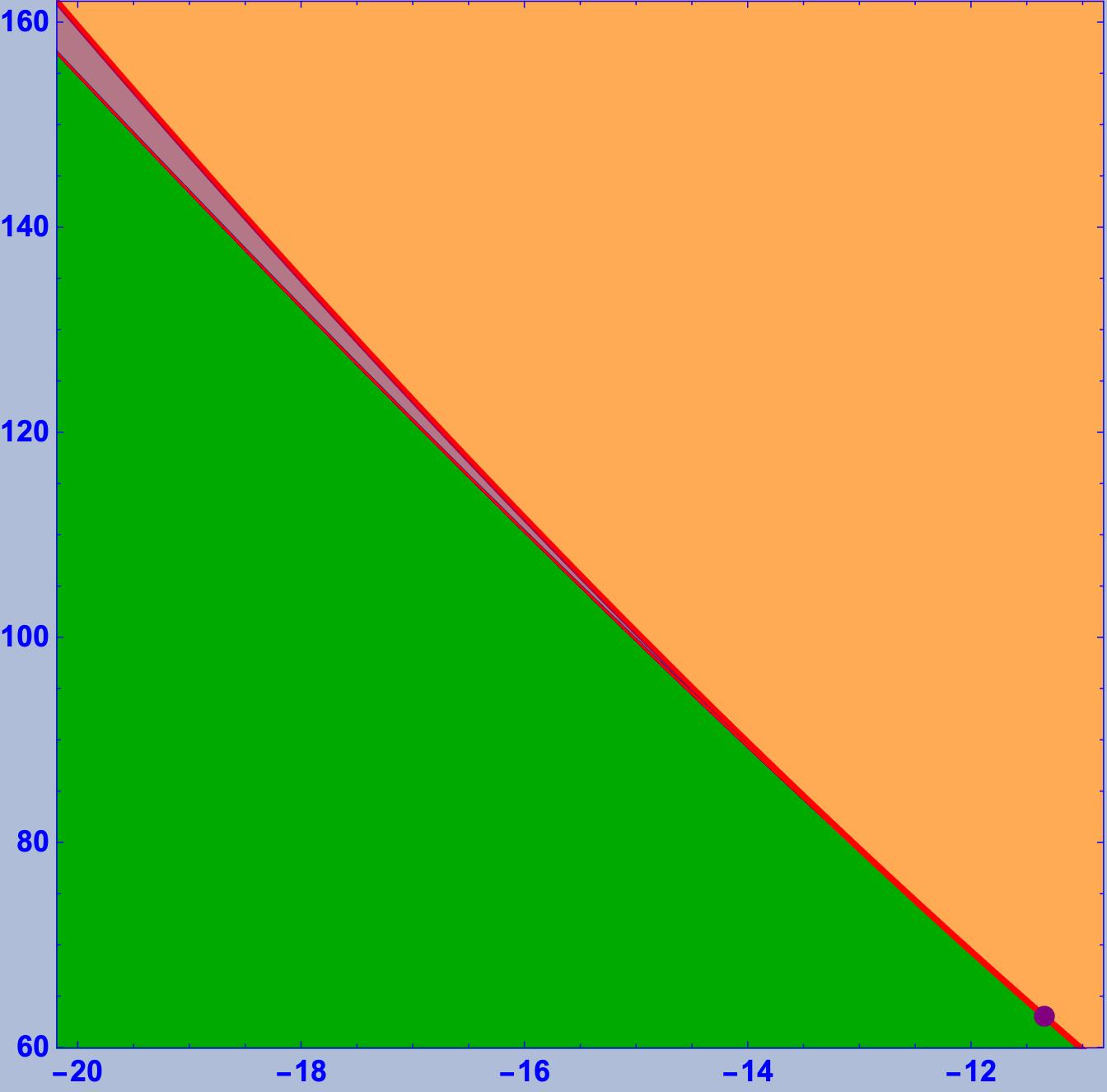}
\caption{On the left: the connected component ${\mathcal B}_1^-$ (dark brown) of ${\mathcal B}_1$.
The point coloured in cyan is the inflection point of $\Xi$ (the union of the arcs coloured in black and magenta
and of the two points) and the cusp of $\Delta_1=0$ (the union of the yellow and red arcs and of the two points).
The point coloured in purple is the point of tangential contact. On the right: the connected
component ${\mathcal B}_1^+$ (dark brow) of ${\mathcal B}_1$.}\label{FIG7}
\end{figure}

\section{Integrability by quadratures}\label{s:integrability}

\subsection{Integrability by quadratures of general critical curves}%\label{ss:integrability1}

\begin{Definition}\label{def:general}
Let $\Delta_2$ be the polynomial
\[
{\Delta_2({\bf c})=9c_1^3\big(c_1^3+216\big)+6c_1^3c_2(c_2+36)+(c_2+9)(c_2+18)^3.}
\]
A critical curve $\gamma$ with modulus ${\bf c}$ is said to be \emph{general} if $\Delta_1({\bf c})\Delta_2({\bf c})\neq 0$.
Since $\Delta_1({\bf c})\neq 0$, the momentum ${\mathfrak M}_{\gamma}$ of a general critical curve $\gamma$ has
three distinct eigenvalues $\lambda_1$, $\lambda_2$, $\lambda_3$, sorted as in Definition~\ref{orbittype}.
\end{Definition}

Let ${\rm J}$ be the maximal interval of definition of the twist (it can be computed in terms of the modulus).
Define ${\bf y}_j\colon{\rm J}\to \C^{1,2}$, $j=1,2,3$, by
\begin{equation}\label{sections}
 \mathbf{y}_j={}^t\!\biggl(\tau(3-{\rm i}{\tau'})-\lambda_j^2-3c_1,  9-\frac{9c_1}{\tau}-\lambda_j\tau-3{\rm i}{\tau'},  {\rm i}\big(\tau^2-3\lambda_j\big)\biggr).
 \end{equation}
Let ${\rm V} \colon {\rm J} \to \mathfrak{gl}(3,\C)$ be the matrix-valued map with column vectors ${\bf y}_1$, ${\bf y}_2$ and ${\bf y}_3$.
Let ${\rm D}(z_1,z_2,z_3)$ denote the diagonal matrix with $z_j$ as the $j$th element on the diagonal.
Recall that, if $c_1\neq 0$, then $\tau$ is nowhere zero.

We can prove the following.

\begin{thmx} \label{thmss:integrability1}
Let $\gamma\colon \mathrm{J} \to \S$ be a general critical curve. The functions ${\rm{det}}({\rm V})$ and $\tau^2-3\lambda_j$,
$j=1,2,3$, are nowhere zero.
Let $r_j$ be continuous determinations of $\sqrt{\tau^2-3\lambda_j}$ and let $\phi_j$ be the functions
defined by\footnote{If $c_1\neq 0$, the denominator of the integrand in nowhere zero and the $\phi_j$ are real-analytic.
If $c_1=0$, the integrand reduces to $\big(3\tau-\lambda_j^2\big)\big(3\lambda_j-\tau^2\big)^{-1}$. Thus, also in this case the functions
$\phi_j$ are real-analytic.}
\begin{equation}\label{angular}
 \phi_j(s)=\int_0^s \frac{3c_1\lambda_j-\big(4c_1+\lambda_j^2\big)\tau^2(u)+3\tau^3(u)}{\tau^2(u)\big(3\lambda_j-\tau^2(u)\big)}{\rm d}u.
 \end{equation}
Then, $\gamma$ is congruent to
\[
{\rm J} \ni s \longmapsto \big[{\rm M}{\rm D}\big(r_1{\rm e}^{{\rm i}\phi_1}, r_2{\rm e}^{{\rm i}\phi_2}, r_3{\rm e}^{{\rm i}\phi_3}\big) {\rm V}^{-1} \mathbf{e}_1\big] \in \S,
\]
where ${\rm M}={\rm V}(0) {\rm D}(r_1(0),r_2(0),r_3(0))^{-1}$.
\end{thmx}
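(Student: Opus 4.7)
The plan rests on simultaneously diagonalizing $L(s)$ and the conserved momentum $\mathfrak{M}$: I will verify that the columns $\mathbf{y}_j(s)$ of $\mathrm{V}(s)$ are eigenvectors of $L(s)$ for the three distinct eigenvalues $\lambda_j$, and then transport them by $\mathcal{F}(s)$ onto fixed eigenvectors of $\mathfrak{M}$. This reduces the problem to computing three scalar factors $f_j(s)$ which, thanks to the Lax equation \eqref{Lax-eqn}, satisfy a scalar linear ODE and hence can be found by quadrature.

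\emph{Step 1 and Step 2 (eigenstructure).} A direct matrix computation with $L$ and $\mathbf{y}_j$, using the conservation laws \eqref{cons-momentum-bis} and the characteristic identity $\lambda_j^3+6c_1\lambda_j+3c_2+27=0$, shows $L(s)\,\mathbf{y}_j(s)=\lambda_j\,\mathbf{y}_j(s)$. Since $\Delta_1(\mathbf{c})\neq 0$, the three $\lambda_j$ are distinct. To see that $\tau^2-3\lambda_j$ is nowhere zero, I will use $\Delta_2(\mathbf{c})\neq 0$: substituting $\tau^2=3\lambda_j$ into the conservation law \eqref{cons-momentum-bis} to eliminate $\tau'$, and combining with ${\rm Q}_{\mathbf{c}}(\lambda_j)=0$, produces by resultant elimination precisely the equation $\Delta_2(\mathbf{c})=0$, contradicting the hypothesis. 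Since the $\mathbf{y}_j$ are nonzero eigenvectors of $L$ with distinct eigenvalues, $\det(\mathrm{V})\neq 0$.

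\emph{Step 3 and Step 4 (momentum conservation and ODE for $f_j$).} By Corollary \ref{cor-thmA}, $\mathcal{F}(s)L(s)\mathcal{F}(s)^{-1}=\mathfrak{M}$, so each $\mathcal{F}(s)\mathbf{y}_j(s)$ lies in the one-dimensional $\lambda_j$-eigenspace of $\mathfrak{M}$. Fixing eigenvectors $v_j$ with matrix $V_{\mathfrak{M}}=(v_1,v_2,v_3)$, one gets $\mathcal{F}(s)\mathbf{y}_j(s)=f_j(s)\,v_j$, equivalently $\mathcal{F}(s)=V_{\mathfrak{M}}\,\mathrm{D}(f_1,f_2,f_3)\,\mathrm{V}(s)^{-1}$. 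Differentiating $L\mathbf{y}_j=\lambda_j\mathbf{y}_j$ and using \eqref{Lax-eqn} yields $L\,(K_{\kappa,\tau}\mathbf{y}_j+\mathbf{y}_j')=\lambda_j(K_{\kappa,\tau}\mathbf{y}_j+\mathbf{y}_j')$, so by one-dimensionality of eigenspaces $K_{\kappa,\tau}\mathbf{y}_j+\mathbf{y}_j'=\mu_j\mathbf{y}_j$ for a scalar $\mu_j$. Differentiating $\mathcal{F}\mathbf{y}_j=f_jv_j$ and using $\mathcal{F}'=\mathcal{F}K_{\kappa,\tau}$ then gives $f_j'=\mu_j f_j$. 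Reading $\mu_j$ off the third component of the eigenvector equation produces
\[
\mu_j=\frac{\tau\tau'}{\tau^2-3\lambda_j}+i\,\frac{3\tau^3-(4c_1+\lambda_j^2)\tau^2+3c_1\lambda_j}{\tau^2(3\lambda_j-\tau^2)},
\]
whose real and imaginary parts integrate, respectively, to $\log(r_j(s)/r_j(0))$ and to $\phi_j(s)$ as defined in \eqref{angular}. Hence $f_j(s)=f_j(0)\,(r_j(s)/r_j(0))\,e^{i\phi_j(s)}$.

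\emph{Step 5 (normalization).} Using the freedom to replace $\gamma$ by a $G$-congruent curve, I choose the Wilczynski frame so that $\mathcal{F}(0)=I_3$. Then $\mathfrak{M}=L(0)$, the eigenvector matrix is $V_{\mathfrak{M}}=\mathrm{V}(0)$, and $f_j(0)=1$. Inserting $\mathrm{D}(f_1,f_2,f_3)=\mathrm{D}(r_1(0),r_2(0),r_3(0))^{-1}\,\mathrm{D}(r_1e^{i\phi_1},r_2e^{i\phi_2},r_3e^{i\phi_3})$ into the decomposition of Step 3 and projecting the first column gives $\gamma(s)=[F_1(s)]=[\mathrm{M}\,\mathrm{D}(r_1e^{i\phi_1},r_2e^{i\phi_2},r_3e^{i\phi_3})\,\mathrm{V}(s)^{-1}\mathbf{e}_1]$ with $\mathrm{M}=\mathrm{V}(0)\,\mathrm{D}(r_1(0),r_2(0),r_3(0))^{-1}$, as claimed. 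The expected main obstacle is the nonvanishing in Step 2: matching the algebraic locus of $\tau^2=3\lambda_j$, obtained after eliminating $\tau'$ via the conservation law, with the polynomial $\Delta_2$ is a nontrivial elimination computation; once this is settled, Steps 1, 3 and 4 are essentially forced, and Step 5 is just a normalization.
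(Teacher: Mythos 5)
Your Steps 1, 3, 4 and 5 reproduce the paper's own argument: the sections $\mathbf{y}_j$ of \eqref{sections} are eigenvectors of $L$, the Lax equation \eqref{Lax-eqn} transports them by $\mathcal{F}$ into the fixed eigenlines of $\mathfrak{M}$, the scalar factor satisfies a first-order linear ODE whose coefficient $\mu_j$ is exactly the paper's $\Phi_j$ in \eqref{intf2}, and the normalization $\mathcal{F}(0)=I_3$ yields ${\rm M}={\rm V}(0)\,{\rm D}(r_1(0),r_2(0),r_3(0))^{-1}$. So the architecture is the same, and those steps are sound.

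The genuine gap is in your Step 2, the nonvanishing of $\tau^2-3\lambda_j$, on which your whole logical order rests (you use it to get $\mathbf{y}_j\neq 0$, $\det({\rm V})\neq 0$, and to read $\mu_j$ off the third component). Substituting $\tau^2=3\lambda_j$ into \eqref{cons-momentum-bis} does \emph{not} eliminate $\tau'$: at a point $s_*$ with $\tau(s_*)^2=3\lambda_j$, the conservation law merely determines $\tau'(s_*)^2=\tfrac{2}{9\lambda_j}{\rm P}_{\bf c}\big(\pm\sqrt{3\lambda_j}\big)$, and this, together with ${\rm Q}_{\bf c}(\lambda_j)=0$, is a solvable system in the unknowns $(\lambda_j,\tau')$ for generic ${\bf c}$ --- no resultant of these two equations can output $\Delta_2({\bf c})=0$. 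A constraint on ${\bf c}$ alone appears only if in addition $\tau'(s_*)=0$, i.e.\ if $\pm\sqrt{3\lambda_j}$ is a \emph{root} of ${\rm P}_{\bf c}$; that implication (such a root forces ${\bf c}$ onto the zero locus of $\Delta_2$, via the curve $\alpha$) is precisely the paper's Lemma B\ref{lemma1Main1}, and your elimination proves no more than it. What it leaves open is a transversal crossing, $\tau(s_*)^2=3\lambda_j$ with $\tau'(s_*)\neq 0$, which cannot be excluded by pointwise algebra; the paper excludes it in two further steps: Lemma B\ref{lemma2Main1} shows $\mathbf{y}_j(s)\neq 0$ for all $s$ (a zero of $\mathbf{y}_j$ forces, through the imaginary parts of the first two components, $\tau'(s_*)=0$, so $\tau(s_*)$ is a root of ${\rm P}_{\bf c}$ equal to $\pm\sqrt{3\lambda_j}$, contradicting B\ref{lemma1Main1}), and Lemma B\ref{lemma3Main1} then uses smoothness of $\Phi_j$, the logarithmic derivative of the nowhere-zero ${\bf w}_j=\mathcal{F}\mathbf{y}_j$: at a putative crossing the third component of $\Phi_j\mathbf{y}_j=\mathbf{y}_j'+K\mathbf{y}_j$ vanishes on the left but has imaginary part $\tau\tau'\neq 0$ on the right. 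Your own Step 4 already contains the needed identity --- $K\mathbf{y}_j+\mathbf{y}_j'=\mu_j\mathbf{y}_j$ holds wherever $\mathbf{y}_j\neq 0$, independently of the third component --- so the repair is a reordering: first prove $\mathbf{y}_j\neq 0$ directly from the components as in Lemma B\ref{lemma2Main1}, then obtain $\mu_j$ from one-dimensionality of the eigenspaces, and only then deduce $\tau^2-3\lambda_j\neq 0$ from the third component of that relation. (For $\lambda_j$ negative or nonreal the nonvanishing is immediate, as the paper notes; the issue concerns the nonnegative real eigenvalues only.)
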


\begin{proof} The proof of Theorem~\ref{thmss:integrability1} is organized into three lemmas.

\begin{lemmaB}\label{lemma1Main1}
The following statements hold true:
\begin{enumerate}\itemsep=0pt
\item[$(1)$] if the momentum has three distinct real eigenvalues, then $\pm \sqrt{3\lambda_2}$ and $\pm\sqrt{3\lambda_3}$
cannot be roots of ${\rm P}_{{\bf c}}$;
\item[$(2)$] if the momentum has two complex conjugate eigenvalues and a positive real eigenvalue $\lambda_1$, then $\pm \sqrt{3\lambda_1}$ cannot be roots of ${\rm P}_{{\bf c}}$.
\end{enumerate}
\end{lemmaB}

\begin{proof}[Proof of Lemma B\ref{lemma1Main1}]
First, note that the image of the parametrized curve
\[
 \alpha(t)=\biggl(-t\bigg(\frac{t^3}{3}+\sqrt{3}\bigg),t^3\bigg(\frac{t^3}{3}+2\sqrt{3}\bigg)-9\biggr)
\]
is contained in the zero locus of $\Delta_2$. This can be proved by a direct computation. Secondly,
from the expression of $\mathrm{Q}_\mathbf{c}$, it follows that
\begin{gather}
c_1=-\frac{1}{6}\big(\lambda_2^2+\lambda_2\lambda_3+\lambda_3^2\big)=-\frac{1}{6}\big(\lambda_1^2+\lambda_1\lambda_2+\lambda_2^2\big), \nonumber\\
 c_2=\frac{1}{3}\big(\lambda_2^2\lambda_3+\lambda_2\lambda_3^2-27\big)=\frac{1}{3}\big(\lambda_1^2\lambda_2+\lambda_1\lambda_2^2-27\big).\label{clemma1Main1}
 \end{gather}

1. Suppose that the momentum has three distinct real eigenvalues.
By contradiction, suppose that $\sqrt{3\lambda_2}$ is a root of ${\rm P}_{{\bf c}}$. Then
\begin{gather*}
0=-\frac{8}{3}{\rm P}_{{\bf c}}\big(\sqrt{3\lambda_2}\big) =\lambda_3^4+2\lambda_2\lambda_3^3-\big(\lambda_2^2-12\sqrt{3}\lambda_2^{1/2}\big)\lambda_3^2-2\big(\lambda_2^3-6\sqrt{3}\lambda_2^{3/2}\big)\lambda_3\\
\hphantom{0=-\frac{8}{3}{\rm P}_{{\bf c}}\big(\sqrt{3\lambda_2}\big) =}{}
+\big(\lambda_2^4-12\sqrt{3}\lambda_2^{5/2}+108\lambda_2\big).
\end{gather*}
Solving this equation with respect to $\lambda_3$, {taking into account that $\lambda_3>0$}, we obtain
\[
 \lambda_3=\frac{1}{2}\biggl(-\lambda_2+\sqrt{5\lambda_2^2-24\sqrt{3\lambda_2}}\biggr).
\]
Substituting into~\eqref{clemma1Main1}, we find
\[
 c_1=\sqrt{3\lambda_2}-\frac{1}{3}\lambda_2^2,\qquad c_2=-9-2\sqrt{3}\lambda_2^{3/2}+\frac{1}{3}\lambda_2^3.
\]
Then, ${\bf c}=\alpha\bigl(-\sqrt{\lambda_2}\bigr)$. This implies that ${\bf c}$ belongs to the zero locus of $\Delta_2$,
which is a~contradiction.
By an analogous argument, we prove that also $-\sqrt{3\lambda_2}$ cannot be a root of~${\rm P}_{{\bf c}}$.
By interchanging the role of $\lambda_2$ and $\lambda_3$ and arguing as above, it follows that
also~$\pm\sqrt{3\lambda_3}$ cannot be roots of ${\rm P}_{{\bf c}}$.

2. Next, suppose that the momentum has two complex conjugate eigenvalues and a nonnegative real
eigenvalue $\lambda_1$.
Recall that the eigenvalues are sorted so that the imaginary part of $\lambda_2$ is positive. By contradiction,
suppose that $\sqrt{3\lambda_1}$ is a root of ${\rm P}_{{\bf c}}$. Then,
 \begin{gather*}
 0=-\frac{8}{3}{\rm P}_{{\bf c}}\big(\sqrt{3\lambda_1}\big) =\lambda_2^4+2\lambda_1\lambda_2^3-\big(\lambda_1^2-12\sqrt{3}\lambda_1^{1/2}\big)\lambda_2^2-2\big(\lambda_1^3-6\sqrt{3}\lambda_1^{3/2}\big)\lambda_2\\
\hphantom{0=-\frac{8}{3}{\rm P}_{{\bf c}}\big(\sqrt{3\lambda_1}\big)=}{}
+\big(\lambda_1^4-12\sqrt{3}\lambda_1^{5/2}+108\lambda_1\big).
\end{gather*}

Solving this equation with respect to $\lambda_2$, taking into account that the imaginary part of~$\lambda_2$ is positive,
we find
\[
 \lambda_2=\frac{1}{2}\biggl(-\lambda_1+\sqrt{5\lambda_1^2-24\sqrt{3\lambda_1}}\biggr).
\]
Substituting into~\eqref{clemma1Main1} yields ${\bf c}=\alpha\bigl(-\sqrt{\lambda_1}\bigr)$. Thus, ${\bf c}$ is a root of $\Delta_2$,
which is a contradiction. An analogous argument shows that $-\sqrt{3\lambda_1}$ cannot be a root of ${\rm P}_{{\bf c}}$.
This concludes the proof of the lemma.
\end{proof}

\begin{lemmaB}\label{lemma2Main1}
${\rm{det}}({\rm V})(s)\neq 0$, for every $s\in {\rm J}_{\gamma}$.
\end{lemmaB}

\begin{proof}[Proof of Lemma B\ref{lemma2Main1}]
Let ${\mathbb L}_j$ be the 1-dimensional eigenspaces of the momentum
${\mathfrak M}_{\gamma}$ relative to the eigenvalues $\lambda_j$.
Let $L$ be as in (\ref{obser-L}). By Corollary~\ref{cor-thmA} of Theorem~\ref{thmA},
we have ${{\mathcal F} L {\mathcal F}^{-1}={\mathfrak M}}$,
where ${\mathcal F}$ is a
Wilczynski
frame field along $\gamma$.
Then,
${L}(s)$ and ${\mathfrak M}$ have the same eigenvalues. Next, consider the line bundles
\[
 \Lambda_j=\big\{(s,{\bf y})\in {\rm J}_{\gamma}\times \C^{1,2} \mid {L}(s){\bf y}=\lambda_j{\bf y}\big\},\qquad j=1,2,3.
\]
Note that $(s,{\bf y})\in \Lambda_j$ if and only if ${\mathcal F}(s){\bf y}\in {\mathbb L}_j$. Let ${\bf y}_j$, $j=1,2,3$,
be as in (\ref{sections}). A direct computation shows that ${L}{\bf y}_j=\lambda_j {\bf y}_j$. Thus,
${\bf y}_j$
%${\bf Y}_j$
is a cross section of the eigenbundle $\Lambda_j$. Hence, ${\rm{det}}({\rm V})(s)\neq 0$ if and only if
 ${\bf y}_j(s)\neq \vec{0}$, for every $s$.

{\bf Case I.} The eigenvalues of the momentum are real and distinct. Let ${\rm y}_j^i$, $i=1,2,3$, denote
the components of ${\bf y}_j$. Since $\lambda_1$ is negative, it follows from~\eqref{sections} that ${\rm y}^3_1(s)\neq 0$,
for every $s$, and hence $\mathbf{y}_1(s)\neq \vec{0}$. We prove that {${\bf y}_2(s)\neq \vec{0}$.
Suppose, by contradiction, that ${\bf y}_2(s_*)=\vec{0}$}, for some $s_*\in {\rm J}_{\gamma}$.
From ${\rm y}^1_2(s_*)={\rm y}^2_2(s_*)=0$, it follows that {$\tau'(s_*)=0$}.
Hence $e:=\tau(s_*)$ is a root of~${\rm P}_{{\bf c}}$. From ${\rm y}^3_2(s_*)=0$,
it follows that $e=\pm \sqrt{3\lambda_2}$, which contradicts Lemma B\ref{lemma1Main1}. An analogous argument leads to
the conclusion that ${\bf y}_3(s)\neq \vec{0}$, for every $s\in {\rm J}_{\gamma}$.

{\bf Case II.} The momentum has a real eigenvalue $\lambda_1$ and two complex conjugate eigenvalues~$\lambda_2$,~$\lambda_3$
($\lambda_2$ with positive imaginary part).
Since $\lambda_2$ and $\lambda_3$ have nonzero imaginary parts and~$\tau$ is real valued,
${\rm y}_2^3(s)\neq 0$ and ${\rm y}_3^3(s)\neq 0$, for every $s$. If $\lambda_1<0$, then ${\rm y}_1^3(s)\neq 0$,
for every~$s$.
If $\lambda_1\ge 0$, suppose, by contradiction, that ${\bf y}_1(s_*)=\vec{0}$.
From ${\rm y}_1^1(s_*)={\rm y}^2_1(s_*)=0$,
we infer that $\tau'(s_*)=0$. Hence $e=\tau(s_*)$ is a root of ${\rm P}_{{\bf c}}$.
From ${\rm y}^3_1(s_*)=0$, we have $e=\pm \sqrt{3\lambda_1}$, which contradicts Lemma~B\ref{lemma1Main1}.
\end{proof}

We are now in a position to conclude the proof. For $j=1,2,3$, let ${\bf w}_j$ be defined by
\[
 {\bf w}_j={\mathcal F}{\bf y}_j\colon \ {\rm J}_{\gamma}\to \C^{1,2}.
\]
Then, ${\bf w}_j(s)\in {\mathbb L}_j$ and ${\bf w}_j(s)\neq \vec{0}$, for every $s$.
Thus, there exist smooth functions $\Phi_j\colon{\rm J}_{\gamma}\to \C$, such that
${\bf w}'_j=\Phi_j {\bf w}_j$. From~\eqref{MCE-W-frame}, we have
\begin{equation}\label{intf}
 \Phi_j {\bf y}_j={\bf y}'_j+K{\bf y}_j,\qquad j=1,2,3,
 \end{equation}
where
\[
K= \begin{pmatrix}
{\rm i}c_1\tau^{-2} & -{\rm i} & \tau\\
0 &-2{\rm i}c_1\tau^{-2} & 1 \\
1 & 0&{\rm i}c_1\tau^{-2}\\
 \end{pmatrix}.
\]
Then, the third component of ${\bf y}'_j+K{\bf y}_j$ is equal to
\[
 3\tau+3c_1\lambda_j\tau^{-2}-\big(\lambda_j^2+4c_1\big)+{\rm i}\tau{\tau}'.
\]
Hence, using~\eqref{intf} we obtain
\begin{equation}\label{intf2}
 \Phi_j=-\frac{\tau \tau'}{3\lambda_j-\tau^2}
 +{\rm i}\frac{3c_1\lambda_j-\big(4c_1+\lambda_j^2\big)\tau^2+3\tau^3}{\tau^2\big(3\lambda_j-\tau^2\big)}.
 \end{equation}

\begin{lemmaB}\label{lemma3Main1}
The functions $3\lambda_j-\tau^2$, $j=1,2,3$, are nowhere zero.
 \end{lemmaB}

\begin{proof}[Proof of Lemma B\ref{lemma3Main1}]
 The statement is obvious if $\lambda_j$ is real and negative or complex, with nonzero imaginary part. If
$\lambda_j$ is real non-negative,
%positive,
the smoothness of $\Phi_j$ implies that $\tau \tau'\big(3\lambda_j-\tau^2\big)^{-1}$ is differentiable.
Then $\big(3\lambda_j-\tau^2\big)(s)\neq 0$, for every $s$, such that $\tau(s)\tau'(s)\neq 0$. {If $\tau(s)\tau'(s)= 0$,
%since $\tau(s)\neq 0$, for every $s$, $\dot\tau(s)= 0$. This implies that
it follows that $\tau(s)$ is a root of the polynomial ${\rm P}_{{\bf c}}$. Therefore, by Lemma B\ref{lemma1Main1}, we have
 that $\big(3\lambda_j-\tau^2\big)(s)\neq 0$.}
\end{proof}

From~\eqref{intf2}, we have
\[
 \int_0^s \Phi_j {\rm d}u =\log \Big(\sqrt{\tau^2-3\lambda_j}\Big)+{\rm i}\phi_j+b_j, \qquad j=1,2,3,
\]
where $b_j$ is a constant of integration,
$\sqrt{\tau^2-3\lambda_j}$
is a continuous {determination} of the square root of
%$3\lambda_j-\tau^2$
$\tau^2-3\lambda_j$
and
%$\log(\sqrt{3\lambda_j-\tau^2})$
$\log\big(\sqrt{\tau^2-3\lambda_j}\big)$
is a continuous determination of the logarithm of
%$\sqrt{3\lambda_j-\tau^2}$.
$\sqrt{\tau^2-3\lambda_j}$.
{Since} ${\bf w}'_j=\Phi_j {\bf w}_j$, we obtain
\[
 {\mathcal F}{\bf y}_j r_j^{-1}{\rm e}^{-{\rm i}\phi_j} = {\bf m}_j, \qquad j=1,2,3,
\]
where $ {\bf m}_j$ is a constant vector belonging to the eigenspace ${\mathbb L}_j$ of ${\mathfrak M}$. This implies
\begin{equation*}%\label{integrate1}
 {\mathcal F}={\rm M}\mathrm{D}\big(r_1{\rm e}^{{\rm i}\phi_1},r_2{\rm e}^{{\rm i}\phi_2},r_2{\rm e}^{{\rm i}\phi_2}\big){\rm V}^{-1},
 \end{equation*}
where ${\rm M}$ is an {invertible} matrix such that ${\rm M}^{-1} {\mathfrak M} {\rm M}=\mathrm{D}(\lambda_1,\lambda_2,\lambda_3)$.
By possibly replacing~$\gamma$ with a congruent curve, we may suppose that
${\mathcal F}(0)=
%{\rm Id}_{3\times 3}
I_3$. Then, since $\phi_j(0)=0$, we have
${\rm M}={\rm V}(0){\rm D}(r_1(0),r_2(0),r_3(0))^{-1}$. This concludes the proof of Theorem~\ref{thmss:integrability1}.
\end{proof}

\subsection[Integrability by quadratures of general critical curves of type B\_1']{Integrability by quadratures of general critical curves of type $\boldsymbol{{\mathcal B}_1'}$}%\label{ss:integrability2}

We now specialize the above procedure to the case of general critical curves of type ${\mathcal B}_1'$ (i.e., general critical
curves with modulus ${\bf c}\in {\mathcal B}_1$ and with periodic twist). Let ${\mathcal M}_+'$ be as in
\eqref{connectedcomponents}.
Since ${\mathcal B}_1$ is contained in ${\mathcal M}_+'$, the lowest roots $e_1$ and $e_2$ of ${\rm P}_{{\bf c}}$ are negative,
for every ${\bf c}\in {\mathcal B}_1$ (cf.\ Remark~\ref{remarksignrootsP}).

\begin{Lemma} \label{Lemma1ss:integrability2}
Let $\gamma$ be a general critical curve of type ${\mathcal B}_1'$. The $\lambda_1$-eigenspace of the momentum is spacelike.
\end{Lemma}

\begin{proof}
Let ${\bf y}_j$ be as in~\eqref{sections}. Then ${\mathcal F}(s){\bf y}_j(s)$ belongs to the
$\lambda_j$-eigenspace of ${\mathfrak M}$, for every $s\in \R$.
Using the conservation law {$\frac32\tau^2 (\tau')^2={\rm P}_{{\bf c}}(\tau)$} (cf.~\eqref{cons-momentum-bis})
and taking into account that $\lambda_j^3+6c_1\lambda_j+3(9+c_2)=0$, {we} compute
\[
 \langle {\mathcal F}{\bf y}_j,{\mathcal F}{\bf y}_j\rangle
 = \langle {\bf y}_j,{\bf y}_j\rangle =3\big(\tau^2-3\lambda_j\big)\big(2c_1+\lambda_j^2\big).
\]
Moreover, since $\lambda_1=-(\lambda_2+\lambda_3)$ and $c_1=-\big(\lambda_2^2+\lambda_2\lambda_3+\lambda_3^2\big)/6$,
we have
%This implies
\begin{gather*}
2c_1+\lambda_1^2=\frac{1}{3}(2\lambda_2+\lambda_3)(\lambda_2+2\lambda_3)>0,\\
2c_1+\lambda_3^2=\frac{1}{3}(\lambda_3-\lambda_2)(\lambda_2+2\lambda_3)>0,\\
2c_1+\lambda_2^2=-\frac{1}{3}(\lambda_3-\lambda_2)(2\lambda_2+\lambda_3)<0.
\end{gather*}
From the fact that $\lambda_1<0$, it follows that $\langle {\bf y}_1,{\bf y}_1\rangle > 0$.
This proves that the $\lambda_1$-eigenspace of the momentum is spacelike.
\end{proof}

\begin{Definition}
There are two possible cases: either the $\lambda_3$-eigenspace of ${\mathfrak M}$ is spacelike, or else is timelike.
In the first case, we say that $\gamma$ is \emph{positively polarized}, while in the second case, we say that
$\gamma$ is \emph{negatively polarized}.
\end{Definition}

\begin{Remark}
In view of the above lemma, $\gamma$ is positively polarized if and only if
{$e_1^2-3\lambda_3>0$}
%$e_1^2-3\lambda_3<0$
and is negatively polarized if and only if
{$e_2^2-3\lambda_3<0$}.
%$e_2^2-3\lambda_3>0$.
It is a linear algebra exercise to prove the existence of ${A}\in {G}$, such that
${A}^{-1}{\mathfrak M}{A}={\mathfrak M}_{\lambda_1,\lambda_2,\lambda_3}$, where
\begin{equation}\label{canformmom}
{\mathfrak M}_{\lambda_1,\lambda_2,\lambda_3}=\begin{pmatrix}
\frac{1}{2}(\lambda_2+\lambda_3) & 0& \frac{\varepsilon{\rm i}}{2}(\lambda_2-\lambda_3)\\
0 &\lambda_1& 0 \\
- \frac{\varepsilon{\rm i}}{2}(\lambda_2-\lambda_3)&0&\frac{1}{2}(\lambda_2+\lambda_3) \\
 \end{pmatrix},
 \end{equation}
where
%\textcolor{red}{$\varepsilon =-{\rm sign}(e_2^2-3\lambda_3)$}
{$\varepsilon=\pm 1$}
accounts for the polarization of $\gamma$ (see below).
It is clear that any critical curve of type ${\mathcal B}'_1$ is congruent to a critical curve whose momentum
is in the canonical form ${\mathfrak M}_{\lambda_1,\lambda_2,\lambda_3}$.
 \end{Remark}

 \begin{Definition}
 A critical curve of type ${\mathcal B}'_1$ is said to be in a \emph{standard configuration} if its momentum is in
 the canonical form~\eqref{canformmom}. Two standard configurations with the same twist are congruent
 with respect to the left action of the maximal compact abelian subgroup
 ${\mathbb T}^2=\{{\rm A}\in {\rm G} \mid {\rm A}{\bf e}_2\wedge {\bf e}_2=0\}$.
 \end{Definition}

Let ${\bf c}\in {\mathcal B}_1$, such that {$\Delta_1({\bf c})\Delta_2({\bf c})\neq 0$}.
Let $e_1<e_2<e_3$ be the real roots of ${\rm P}_{{\bf c}}$ and let
$\lambda_1=-(\lambda_2+\lambda_3)<0<\lambda_2<\lambda_3$ be the roots of ${\rm Q}_{{\bf c}}$.
Let $\tau$ be the periodic function defined as in {the first of the}~\eqref{twistnB1'} and $\phi_j$, $j=1,2,3$,
be as in~\eqref{angular}.
Let $\rho_j$ be the constants
\begin{gather*}
\rho_1=\frac{1}{\sqrt{(2\lambda_2+\lambda_3)(\lambda_2+2\lambda_3)}},\qquad
 \rho_2=\frac{1}{\sqrt{2(\lambda_3-\lambda_2)(2\lambda_2+\lambda_3)}},\\
\rho_3=\frac{1}{\sqrt{2(\lambda_3-\lambda_2)(\lambda_2+2\lambda_3)}}
\end{gather*}
and $z_j$ be the functions
\begin{gather}
z_1=\rho_1\sqrt{3(\lambda_2+\lambda_3)+\tau^2}{\rm e}^{{\rm i}\phi_1},\qquad
z_2=\rho_2\sqrt{3\lambda_2-\tau^2}{\rm e}^{{\rm i}\phi_2},\qquad
z_3=\rho_3\sqrt{3\lambda_3-\tau^2}{\rm e}^{{\rm i}\phi_3}.\!\!\!\label{functions}
\end{gather}

Let $\varepsilon =-{\rm sign}\big(e_2^2-3\lambda_3\big)$.
We can state the following.

\begin{thmx} \label{Theorem1ss:integrability2} %this is Theorem C
A general critical curve of type ${\mathcal B}'_1$ with modulus ${\bf c}$ is congruent to
\begin{equation}\label{standard}
\gamma\colon \ \R \ni s \longmapsto \big[{^t\!(}z_2+z_3,\varepsilon {\rm i}z_1,-\varepsilon {\rm i}(z_2-z_3))\big]\in \S.\end{equation}
In addition, $\gamma$ is in a standard configuration.
\end{thmx}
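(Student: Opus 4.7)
The plan is to combine Theorem \ref{thmss:integrability1} with the standard-configuration reduction provided by Lemma \ref{Lemma1ss:integrability2} and the remark preceding the theorem. First I would invoke that remark to produce $A \in G$ with $A^{-1}\mathfrak{M}_{\gamma} A = \mathfrak{M}_{\lambda_1,\lambda_2,\lambda_3}$ (with the sign $\varepsilon = \pm 1$ reflecting the polarization of $\gamma$) and replace $\gamma$ by the congruent curve $A^{-1}\gamma$; this puts $\gamma$ in standard configuration, which already establishes the second assertion of the theorem.

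Next I would revisit the construction inside the proof of Theorem \ref{thmss:integrability1}. Direct diagonalization shows that the eigenvectors of $\mathfrak{M}_{\lambda_1,\lambda_2,\lambda_3}$ are $\mathbf{e}_2$ for $\lambda_1$ and $(1,0,\mp\varepsilon i)^T$ for $\lambda_{2,3}$. Hence the constant direction vectors $\mathbf{m}_j$ produced in that proof, satisfying $\mathcal{F}(s)\mathbf{y}_j(s) = r_j(s)\,e^{i\phi_j(s)}\,\mathbf{m}_j$, are scalar multiples of these three explicit eigenvectors.

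A key observation is the decomposition $\mathbf{y}_j(s) = \mathbf{a}(s) - \lambda_j\mathbf{b}(s) - \lambda_j^2\mathbf{e}_1$, where $\mathbf{a}(s) = (\tau(3-i\tau')-3c_1,\,9-9c_1/\tau-3i\tau',\,i\tau^2)^T$ and $\mathbf{b}(s) = (0,\tau,3i)^T$ are independent of $j$. Writing $\mathbf{e}_1 = \sum_j W_j(s)\mathbf{y}_j(s)$ and equating coefficients in the (generically independent) triple $\{\mathbf{a},\mathbf{b},\mathbf{e}_1\}$ gives the Vandermonde system $\sum_j W_j = 0$, $\sum_j W_j\lambda_j = 0$, $\sum_j W_j \lambda_j^2 = -1$, whose solution $W_j = -1/\prod_{k\neq j}(\lambda_j - \lambda_k)$ is \emph{constant} in $s$. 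Using Vieta's identity $\lambda_1 = -(\lambda_2+\lambda_3)$, a short computation yields $W_1 = -\rho_1^2$, $W_2 = 2\rho_2^2$, $W_3 = -2\rho_3^2$, precisely with the $\rho_j$ of the theorem. Therefore $F_1(s) = \mathcal{F}(s)\mathbf{e}_1 = \sum_j W_j r_j e^{i\phi_j}\mathbf{m}_j$, a combination with constant coefficients $W_j$ and constant direction vectors $\mathbf{m}_j$.

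Finally, the scalar magnitudes of the $\mathbf{m}_j = \alpha_j\mathbf{v}_j$ are fixed by the requirement $\mathcal{F}(s)\in G$: the pseudo-Hermitian identity $\langle \mathbf{y}_j,\mathbf{y}_j\rangle = 3(\tau^2-3\lambda_j)(2c_1+\lambda_j^2)$ from the proof of Lemma \ref{Lemma1ss:integrability2}, together with the fact that the polarization-dependent ratio $\sqrt{3\lambda_j-\tau^2}/r_j$ is a constant multiple of $\pm i$, pins down the $\alpha_j$ so that the products $W_j\alpha_j r_j e^{i\phi_j}$ agree with $\rho_j\sqrt{3\lambda_j-\tau^2}\,e^{i\phi_j}$, i.e.\ with the functions $z_j$ of \eqref{functions}; residual phase freedom is absorbed by the $\mathbb{T}^2$-action of the standard configuration. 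Expanding $F_1$ in the standard basis using $(1,0,-\varepsilon i)^T + (1,0,\varepsilon i)^T = 2\mathbf{e}_1$ and $(1,0,-\varepsilon i)^T - (1,0,\varepsilon i)^T = -2\varepsilon i\,\mathbf{e}_3$ then produces the formula \eqref{standard}. The main obstacle will be this last bookkeeping step, where the $\varepsilon$-dependent sign of $\tau^2-3\lambda_j$ and the choice of square-root branch in $r_j$ must be tracked carefully so that the constants $\rho_j$ emerge cleanly and no residual phase survives.
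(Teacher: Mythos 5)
Your proposal is correct, and it reaches \eqref{standard} by a genuinely different mechanism than the paper. The paper stays at the level of the full Wilczynski frame: it orthonormalizes the eigenvector sections $\mathbf{y}_j$ of \eqref{sections} into a pseudo-unitary map $U$ with $U^{-1}LU$ diagonal, uses Theorem \ref{thmss:integrability1} to conclude that ${\mathcal F}\,U\,D(e^{-i\phi_3},e^{-i\phi_2},e^{-i\phi_1})$ is a \emph{constant} pseudo-unitary basis, rescales it by a cube root of unity so that a unique $A\in G$ carries it to the explicit unimodular basis ${\rm B}$ of \eqref{standardpsh}, and then reads off the first column of $A{\mathcal F}=\epsilon^{-1}{\rm B}\,D(e^{i\phi_3},e^{i\phi_2},e^{i\phi_1})\,D(-1,1,1)\,{}^t\bar{U}\mathbf{h}$; this one computation yields \eqref{standard} and, via ${\mathfrak M}={\rm B}D(\lambda_3,\lambda_2,\lambda_1){\rm B}^{-1}={\mathfrak M}_{\lambda_1,\lambda_2,\lambda_3}$, the standard-configuration claim simultaneously. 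You instead normalize the momentum first and work only with the column $F_1={\mathcal F}\mathbf{e}_1$: your decomposition $\mathbf{y}_j=\mathbf{a}-\lambda_j\mathbf{b}-\lambda_j^2\mathbf{e}_1$ and the Vandermonde solution $W_j=-1/\prod_{k\neq j}(\lambda_j-\lambda_k)$ amount to an explicit, manifestly $s$-independent computation of $V^{-1}\mathbf{e}_1$ in Theorem \ref{thmss:integrability1}, and I checked that your identifications $W_1=-\rho_1^2$, $W_2=2\rho_2^2$, $W_3=-2\rho_3^2$, the eigenvector assignments $\mathbf{e}_2\leftrightarrow\lambda_1$, $(1,0,\mp\varepsilon i)^T\leftrightarrow\lambda_{2,3}$, and the magnitude matching via $\langle\mathbf{y}_j,\mathbf{y}_j\rangle=3(\tau^2-3\lambda_j)(2c_1+\lambda_j^2)$ (so $|\alpha_1|=\rho_1^{-1}$, $|\alpha_{2,3}|=(2\rho_{2,3})^{-1}$, giving exactly $|W_j\alpha_j r_j|=|z_j|$) all come out right, with the causal characters of the $\mathbf{m}_j$ consistent with $\varepsilon$ in both polarizations. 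Two remarks on what each route buys. Your version is leaner and makes the constants $\rho_j$ appear conceptually as Lagrange-interpolation coefficients, bypassing the construction of $U$, the matrix ${\rm B}$, and the cube-root bookkeeping; but it shifts weight onto the final phase-absorption step, which you should make explicit: the stabilizer ${\mathbb T}^2$ acts on the three eigenlines by unit scalars $(u_1,u_2,u_3)$ with $u_1u_2u_3=1$, and together with an overall projective scalar this realizes \emph{any} triple of unit phases, so the relative phases (including the signs hidden in $W_1,W_3<0$, the factor $\varepsilon i$ on $z_1$, and the branch constants $r_j/\sqrt{3\lambda_j-\tau^2}\in\{\pm1,\pm i\}$ guaranteed constant by Lemma B\ref{lemma3Main1}) can indeed all be normalized away without disturbing the momentum. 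Also, your ``generically independent'' hedge on $\{\mathbf{a},\mathbf{b},\mathbf{e}_1\}$ is unnecessary: the identity $\sum_j W_j\mathbf{y}_j=\mathbf{e}_1$ follows by direct substitution for all $s$, and uniqueness is automatic from $\det{\rm V}\neq 0$ (Lemma B\ref{lemma2Main1}). The paper's frame-level argument, by contrast, costs more matrix bookkeeping but tracks the full frame ${\mathcal F}$, which is what later sections actually use (monodromy, turning number), and it delivers the standard-configuration statement as a byproduct rather than as a separate first step.
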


\begin{proof}
Let $\widetilde{\gamma}$ be a critical curve of type ${\mathcal B}'_1$ with modulus ${\bf c}$. Let ${\mathcal F}$ be
a
%canonical
Wilczynski frame along~$\widetilde{\gamma}$. Suppose $\varepsilon =1$ (i.e., $\langle {\bf y}_3,{\bf y}_3\rangle<0$).
Let ${\bf u}_j$ be the maps defined by
 \begin{gather*}
{\bf u}_1=\frac{1}{\sqrt{3}\sqrt{2c_1+\lambda_1^2}\sqrt{\tau^2-3\lambda_1}}{\bf y}_1,\qquad
{\bf u}_2=\frac{1}{\sqrt{3}\sqrt{-\big(2c_1+\lambda_2^2\big)}\sqrt{\tau^2-3\lambda_2}}{\bf y}_2,\\
{\bf u}_3=\frac{1}{\sqrt{3}\sqrt{2c_1+\lambda_3^2}\sqrt{\tau^2-3\lambda_3}}{\bf y}_3.
\end{gather*}

Consider the map
${U}=({\bf u}_3,{\bf u}_2, {\bf u}_1)\colon\R\to {\rm GL}(3,\C)$.
From Theorem~\ref{thmss:integrability1}
and Lemma~\ref{Lemma1ss:integrability2}, we have
\begin{itemize}\itemsep=0pt
\item $\langle {\bf u}_1,{\bf u}_1 \rangle= \langle {\bf u}_2,{\bf u}_2 \rangle=-\langle {\bf u}_3,{\bf u}_3 \rangle
 =1$, and $\langle {\bf u}_i,{\bf u}_j\rangle = 0$, for $i\neq j$, that is, ${U}(s)$ is a~pseudo-unitary
 basis of $\C^{1,2}$, for every $s\in \R$;

\item ${U}^{-1}{L}{U}=D(\lambda_3,\lambda_2,\lambda_1)$.
\end{itemize}
Using again Theorem~\ref{thmss:integrability1}, we obtain
\begin{gather}\label{nf}
{\mathcal F} {U} D\big({\rm e}^{-{\rm i}\phi_3}, {\rm e}^{-{\rm i}\phi_2}, {\rm e}^{-{\rm i}\phi_1}\big)=
{\rm M}D\Big(\sqrt{3\big(2c_1+\lambda_3^2\big)}, \sqrt{-3\big(2c_1+\lambda_2^2\big)}, \sqrt{3\big(2c_1+\lambda_1^2\big)}\Big)^{-1},
 \end{gather}
where the matrix ${\rm M}\in {\rm GL}(3,\C)$ diagonalizes the momentum of $\widetilde{\gamma}$, i.e.,
${\rm M}^{-1}\widetilde{{\mathfrak M}}{\rm M}=
D(\lambda_3,\lambda_2,\lambda_1)$.
In particular, the column vectors of the right hand side of~\eqref{nf}, denoted by $\widetilde{{\rm B}}$,
constitutes a~pseudo-unitary basis. Let $\epsilon$ be the inverse of a cubic root of ${\widetilde{\rm B}}$.
Then, the column vectors of~$\epsilon {\widetilde{\rm B}}$ constitute a unimodular pseudo-unitary basis.
Therefore, there exists a unique ${A}\in {G}$, such that
$\epsilon {A}{\widetilde{\rm B}}={\rm B}$, where
\begin{equation}\label{standardpsh}
 {\rm B}=\begin{pmatrix}
 \frac{1}{\sqrt{2} }&-\frac{1}{\sqrt{2}} & 0\\
 0 & 0 &{\rm i}\\
 \frac{{\rm i}}{\sqrt{2}} &\frac{{\rm i}}{\sqrt{2}} & 0\\
 \end{pmatrix}.
 \end{equation}
Then
\begin{equation}\label{nf2}
{A}{\mathcal F}=\epsilon^{-1}{\rm B} D\big({\rm e}^{{\rm i}\phi_3}, {\rm e}^{{\rm i}\phi_2}, {\rm e}^{{\rm i}\phi_1}\big){U}^{-1}=
\epsilon^{-1}{\rm B}D\big({\rm e}^{{\rm i}\phi_3}, {\rm e}^{{\rm i}\phi_2}, {\rm e}^{{\rm i}\phi_1}\big)D(-1,1,1){^t\!{}\bar{U}}{\bf h}.
 \end{equation}
It is now a computational matter to check that the first column vector of the right hand side of~\eqref{nf2} is
$\epsilon^{-1}\,{^t\!(}z_2+z_3, {\rm i}z_1,- {\rm i}(z_2-z_3))$.
This implies $\widetilde{\gamma}={A}^{-1}\gamma$ (i.e., $ \widetilde{\gamma}$ and $\gamma$ are congruent to each other).

Taking into account that ${U}^{-1}{L}{U}=D(\lambda_3,\lambda_2,\lambda_1)$ and using~\eqref{nf}, the momentum
of $\widetilde{\gamma}$ is
$\widetilde{{\mathfrak M}}=\widetilde{{\rm B}}D(\lambda_3,\lambda_2,\lambda_1)\widetilde{{\rm B}}^{-1}$.
Therefore, the momentum of $\gamma$ is
\[
{\mathfrak M}={A}\widetilde{{\rm B}}D(\lambda_3,\lambda_2,\lambda_1)\widetilde{{\rm B}}^{-1}{A}^{-1}=
{\rm B}D(\lambda_3,\lambda_2,\lambda_1){\rm B}^{-1}={\mathfrak M}_{\lambda_1,\lambda_2,\lambda_3}.
\]
This proves that $\gamma$ is in standard configuration.

If $\varepsilon =-1$ (i.e., $\langle {\bf y}_3,{\bf y}_3\rangle>0$), considering ${U}=({\bf u}_2,{\bf u}_3,{\bf u}_1)$
and arguing as above, we get the same conclusion.
\end{proof}

\begin{Remark}
 Theorem~\ref{Theorem1ss:integrability2} implies that a standard configuration $\gamma$
 does not pass through the pole
 $[\mathbf{e}_3]$
 of the Heisenberg projection $\pi_{H}$. Thus $\check{\gamma}:=\pi_{H}\circ \gamma$ is a transversal curve of $\R^3$,
 which does not
intersect the $Oz$-axis.
 \end{Remark}

 \begin{Remark}%\label{angfn}
 Breaking the integrands into partial fractions, the integrals
\[
%\Phi_j(\tau)=
{f_j(\tau)} =
\sqrt{\frac{3}{2}}\int_{e_2}^{\tau}\frac{3c_1\lambda_j-\big(4c_1+\lambda_j^2\big)
+3\tau^3}{\tau\big(3\lambda_j-\tau^2\big)\sqrt{{\rm P}_{{\bf c}}(\tau)}}{\rm d}\tau,\qquad j=1,2,3,
\]
can be written as linear combinations of standard hyperelliptic integrals of the first and third kind.
Then $\phi_j$ is the odd quasi-periodic function with quasi-period $2\omega$ such that
{$\phi_j(s)=f_j[\tau(s)]$}.
In practice, we compute $\tau$ and $\phi_j$, $j=1,2,3$, by numerically solving the following system of ODE,
\begin{gather}
\tau''=\tau^2-9c_1\tau^{-2}\big(1-c_1\tau^{-1}\big),\nonumber\\
\phi_j'=\frac{3c_1\lambda_j-\big(4c_1+\lambda_j^2\big)\tau^2+3\tau^3}{\tau^2\big(3\lambda_j-\tau^2\big)},\qquad j =1,2,3,\label{nsol}
\end{gather}
with initial conditions
\begin{equation}\label{icon}
 \tau(0)=e_2,\qquad \tau'(0)=0,\qquad \phi_j(0)=0, \qquad j=1,2,3.\end{equation}
 \end{Remark}

\subsection{Closing conditions}%\label{ss:integrability3}

From Theorem~\ref{Theorem1ss:integrability2}, it follows that a critical curve of type ${\mathcal B}'_1$
is closed if and only if
\[
 {\mathfrak P}_j=\frac{1}{2\pi}\phi_j(2\omega)\in \Q,\qquad j=1,2,3.
\]
On the other hand,
\begin{equation}\label{period}
 \frac{1}{2\pi}\phi_j(2\omega)=\frac{1}{\pi}\int_{e_2}^{e_1} \frac{\sqrt{3}\big(3c_1\lambda_j-\big(4c_1+\lambda_j^2\big)\tau^2+3\tau^3\big)} {\sqrt{2}\tau\big(3\lambda_j-\tau^2\big)\sqrt{{\rm P}_{{\bf c}}(\tau)}}{\rm d}\tau.
 \end{equation}
Thus, $\gamma$ is closed if and only if the complete hyperelliptic integrals on the right hand side of~\eqref{period} are rational.
For a closed critical curve $\gamma$, we put ${\mathfrak P}_j= q_j=m_j/n_j$, where $n_j>0$ and $\gcd(m_j,n_j)=1$.
We call $q_j$, the \emph{quantum numbers} of $\gamma$.
By construction, ${\rm e}^{{\rm i}2\pi{\mathfrak P}_1}, {\rm e}^{{\rm i}2\pi {\mathfrak P}_2}$ and~${\rm e}^{{\rm i}2\pi {\mathfrak P}_3}$ are the eigenvalues of the \emph{monodromy} ${\mathtt M}_{\gamma}={\mathcal F}(2\omega){\mathcal F}(0)^{-1}$ of $\gamma$.
Since ${\rm det}({\mathtt M}_{\gamma})=1$, we have
\[
 \sum_{j=1}^{3} {\mathfrak P}_j\equiv 0,\quad \mod{\mathbb Z}.
\]
 Then, $\gamma$ is closed if and only if two among the integrals ${\mathfrak P}_j$, $j=1,2,3$, are rational.

\begin{Remark}
 The closing conditions can be rephrased as follows. Consider the even quasi-periodic functions $\phi_1$, $\phi_3$.
 Then, the critical curve is closed if and only if the jumps $\phi_j|_0^{2\omega}$, $j=1,3$, are rational.
 \end{Remark}

\begin{Example}\label{ex1p1}
We now consider an example, which will be taken up again in the last section. Choose
 ${\bf c}\approx (-0.8284243304411575,-8.349417691746162)\in {\mathcal B}_1^-$.
 The real roots of the quintic polynomial are
\[
 e_1\approx -0.931924<e_2\approx -0.678034<0<e_3\approx 2.79051
\]
 and the eigenvalues of the momentum are
\[
 \lambda_1\approx -2.40462<0<\lambda_2\approx 0.40614<\lambda_3\approx 1.99848.
\]
The half-period of the twist is computed by numerically evaluating the hyperelliptic integral~\eqref{halfpB1}.
 We evaluate
 $\tau$, $\phi_1$, $\phi_2$, $\phi_3$ by solving numerically
 the system~\eqref{nsol}, with initial conditions~\eqref{icon} on the interval $[-4\omega,4\omega]$.
 Figure~\ref{FIGANG} reproduces the graph of the quasi-periodic function $\phi_1$ on the interval
 $[-4\omega,4\omega]$ (the graph of the twist was depicted in Figure~\ref{FIG5}). The red point on the $Ox$-axis is
 $2\omega$ and the length of the arrows is the jump $\phi_1\vert_0^{2\omega}$.
 In this example,
\[
 \biggl|-\frac{1}{2\pi}\frac{2}{15}-\phi_1|_0^{2\omega}\biggr|=1.6151\cdot 10^{-8},\qquad
 \biggl|-\frac{1}{2\pi}\frac{10}{21}-\phi_3|_0^{2\omega}\biggr|=4.46887\cdot 10^{-8}.
\]
So, modulo negligible numerical errors, the corresponding critical curves are closed, with quantum number
$q_1=-2/15$ and $q_3=-10/21$.
In the last section, we will explain how we computed the modulus. A standard configuration of a curve with modulus
${\bf c}$ is represented in Figure~\ref{FIG12}.

\begin{figure}[t]\centering
\includegraphics[height=4.5cm,width=6.1cm]{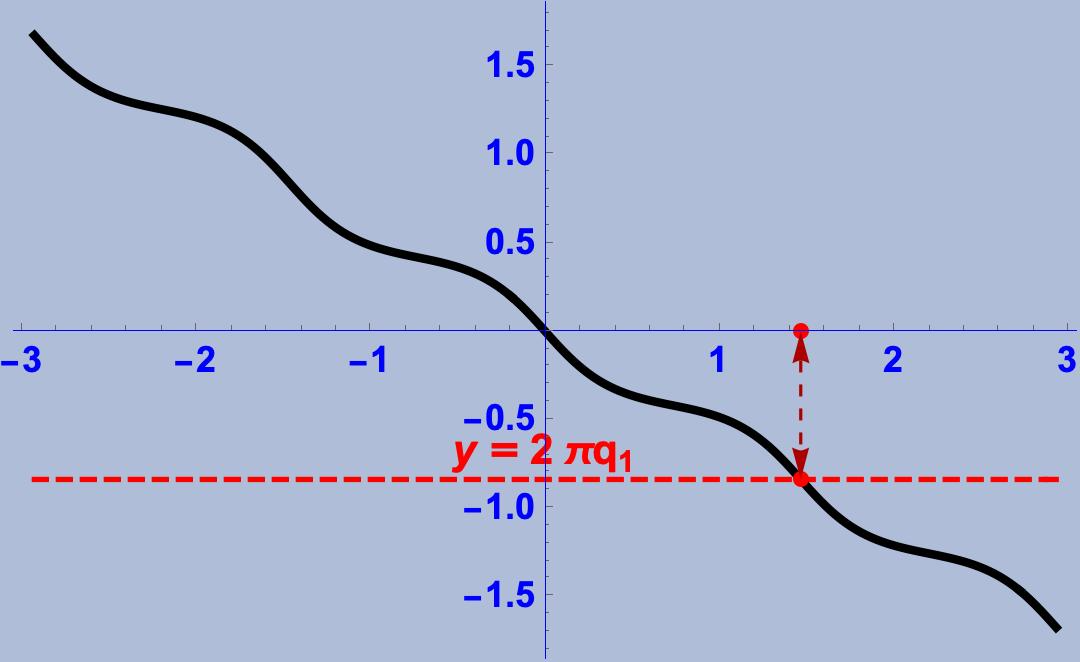}
\caption{The graph of $\phi_1$ for ${\bf c}\approx (-0.8284243304411575,-8.349417691746162)\in {\mathcal B}_1^-$.}\label{FIGANG}
\end{figure}
 \end{Example}

\subsection{Discrete global invariants of a closed critical curve}\label{ss:disinvcritical}

Consider a closed general critical curve $\gamma$ of type ${\mathcal B}_1'$,
with modulus ${\bf c}$
and quantum numbers $q_1=m_1/n_1$, $q_2=m_2/n_2$, $q_3=m_3/n_3$, $q_1+q_2+q_3\equiv 0 \mod \Z$.
The half-period $\omega$ of the twist is given by the complete hyperelliptic integral~\eqref{halfpB1}.
Let
${\mathtt M}_{\gamma}={\mathcal F}(\omega){\mathcal F}(0)^{-1}$ be the monodromy of $\gamma$.
The monodromy does not depend on the choice of the canonical lift.
It is a diagonalizable element of ${G}$ with eigenvalues ${\rm e}^{2\pi {\rm i} q_1}$, ${\rm e}^{2\pi {\rm i} q_2}$, and ${\rm e}^{2\pi {\rm i} q_3}$.
Thus, ${\mathtt M}_{\gamma}$ has finite order $n=\mathrm{lcm}(n_1,n_3)$.
The momentum ${\mathfrak M}_{\gamma}$ has three distinct real eigenvalues,
so its stabilizer is a maximal compact abelian subgroup ${\mathbb T}^2_{\gamma}\cong {\rm S}^1\times {\rm S}^1$ of ${G}$
\big(if $\gamma$ is a standard configuration, ${\mathbb T}^2_{\gamma}={\mathbb T}^2$\big).
Since $[{\mathtt M}_{\gamma},{\mathfrak M}_{\gamma}]=0$, ${\mathtt M}_{\gamma}\in {\mathbb T}^2_{\gamma}$.
Let $s_1$, $s_3$ be the integers defined by $n=s_1n_1=s_3n_3$. The \textit{CR spin} of $\gamma$ is $1/3$ if and only if
$n\equiv 0 \mod 3$ and $m_1s_1\equiv m_3s_3\not\equiv 0 \mod 3$.
The \textit{wave number} $\mathbf{n}_{\gamma}$ of $\gamma$ is $n$ if the spin is 1 and $n/3$ if the spin is $1/3$.
Let $|[\gamma]|$ denote the trajectory of $\gamma$.
The stabilizer $\hat{{G}}_{\gamma}=\{[A]\in [{G}]\mid [{A}]\cdot |[\gamma]| = |[\gamma]|\}$ is spanned by
$[{\mathfrak M}_{\gamma}]$ and is a cyclic group of order ${\bf n}_{\gamma}$.
Geometrically, $\hat{{G}}_{\gamma}$ is the \emph{symmetry group} of the critical curve $\gamma$.
The \textit{CR turning number} ${\it w}_{\gamma}$ is the degree of the map
$\R/2n\omega \Z$ $\ni$ $s\mapsto F_1-{\rm i} F_3$ $\in$ $\dot{\C} := \mathbb{C}\setminus \{0\}$,
where the $F_j$'s
are the components of a Wilczynski
frame
along $\gamma$.
Without loss of generality, we may suppose that $\gamma$ is in a standard configuration. From
\eqref{standard}, it follows that ${\it w}_{\gamma}$ is the degree of
$ \R/2n\omega \Z\ni s\mapsto z_3\in \dot{\C}$, if $\varepsilon_\gamma = 1$,
and is the degree of
$ \R/2n\omega \Z\ni s\mapsto z_2\in \dot{\C}$, if $\varepsilon_\gamma = -1$.
Therefore,
\[
 {\it w}_{\gamma}=
 \begin{cases} s_3m_3 & {\rm if}\ \varepsilon_{\gamma}=1,\\
 s_2m_2 & {\rm if}\ \varepsilon_{\gamma}=-1.
 \end{cases}
 \]
A closed critical curve $\gamma$ has an additional discrete CR invariant, denoted by ${\rm tr}_*(\gamma)$,
the trace of $\gamma$ with respect to
the spacelike $\lambda_1$-eigenspace of the momentum.
To clarify the geometrical meaning of the trace, it is convenient to consider a standard configuration.
In this case, ${\mathbb L}_1$ is spanned by $\mathbf{e}_2\in \C^{1,2}$
and the corresponding chain is the intersection of $\S$ with the projective line $z_2=0$. The Heisenberg projection
of this chain is the upward oriented $Oz$-axis. Thus, $\mathrm{tr}_*(\gamma)$ is the linking number ${\rm Lk}\big(\check{\gamma},Oz^{\uparrow}\big)$ of the Heisenberg projection of $\gamma$ with the upward oriented $Oz$-axis.

\begin{Proposition}
Let $\gamma$ be as above. Then
\[{\rm tr}_*(\gamma)=\begin{cases} (q_1-q_3){\bf n}_{\gamma} & {\rm if}\ \varepsilon_{\gamma}=1,\\
(q_1-q_2){\bf n}_{\gamma} & {\rm if}\ \varepsilon_{\gamma}=-1.
\end{cases}
\]
\end{Proposition}

\begin{proof}
Without loss of generality, we may assume that $\gamma$ is in standard configuration.
The Heisenberg projection of $\gamma$ is
\[
 \check{\gamma}={^t\!}\bigg(\operatorname{Re}\bigg(\frac{\varepsilon iz_1,}{z_2+z_3}\bigg),
 {\operatorname{Im}\bigg(\frac{\varepsilon {\rm i}z_1,}{z_2+z_3}\bigg)}, \operatorname{Re}\bigg(\frac{-\varepsilon {\rm i}(z_2-z_3)}{z_2+z_3}\bigg) \bigg).
\]
Since $\check{\gamma}$ does not intersect the $Oz$-axis, the linking number
${\rm Lk}\big(\check{\gamma},Oz^{\uparrow}\big)$ is the degree of
\[
 \R/2{\bf n}_{\gamma}\omega \Z \ni s \longmapsto \frac{z_1}{z_2+z_3}\in \dot{\C}.
\]
From~\eqref{functions}, it follows that this degree is the degree of
\[
 f\colon \ \R/2{\bf n}_{\gamma}\omega \Z\ni s\longmapsto
 \frac{\rho_1\sqrt{3(\lambda_2 + \lambda_3)+\tau^2(s)} {\rm e}^{{\rm i}\phi_1}}{\rho_2\sqrt{3\lambda_2-\tau^2(s)} {\rm e}^{{\rm i}\phi_2}+ \rho_3\sqrt{3\lambda_3-\tau^2(s)} {\rm e}^{{\rm i} \phi_3}}.
\]
Suppose that $\gamma$ is
negatively
polarized. Then, $\tau^2-3\lambda_3<\tau^2-3\lambda_2<0$ and $0<\tau^2<3\lambda_2$. Therefore,
\[
 0<\frac{\rho_2\sqrt{3\lambda_2-\tau^2}}{\rho_3\sqrt{3\lambda_3-\tau^2}}=
\sqrt{\frac{\big(3\lambda_2-\tau^2\big)(\lambda_2+2\lambda_3)}{\big(3\lambda_3-\tau^2\big)(2\lambda_2+\lambda_3)}}
\le \sqrt{\frac{\lambda_2(\lambda_2+2\lambda_3)}{\lambda_3(2\lambda_2+\lambda_3)}}<1.
\]
Thus
\[
 f= \frac{\rho_1\sqrt{3(\lambda_2 + \lambda_3)+\tau^2}}{\rho_3\sqrt{3\lambda_3-\tau^2}}
 \frac{{\rm e}^{{\rm i}(\phi_1-\phi_3)}}{1+h {\rm e}^{{\rm i}(\phi_2-\phi_3)}},
\]
where
\[
 h = \frac{\rho_2\sqrt{3\lambda_2-\tau^2}}{\rho_3\sqrt{3\lambda_3-\tau^2}}.
\]
Since $0<h<1$, the image of $1+h{\rm e}^{{\rm i}(\phi_2-\phi_3)}$
is a curve contained in a disk of radius $<1$ centered at $(1,0)$. Hence $1+h{\rm e}^{{\rm i}(\phi_2-\phi_3)}$ is null-homotopic in $\dot{\C}$. This implies
\[
 \deg (f)=\frac{1}{2\pi}(\phi_1-\phi_3)\Big|_0^{2{\bf n}_{\gamma}\omega} = {\bf n}_{\gamma}(q_1-q_3).
\]

Suppose that $\gamma$ is
positively
polarized. Then, $\tau^2-3\lambda_2>\tau^2-3\lambda_3>0$. In particular, $\tau^2>3\lambda_3>0$ and
\[
0<\frac{\rho_3\sqrt{\tau^2-3\lambda_3}}{\rho_2\sqrt{\tau^2-3\lambda_2}}
=\sqrt{\frac{(\tau^2-3\lambda_3)(2\lambda_2+\lambda_3)}{(\tau^2-3\lambda_2)(\lambda_2+2\lambda_3)}}
< \sqrt{
\frac{2\lambda_2+\lambda_3}
{\lambda_2+2\lambda_3}}<1.
\]
Then
\[
 f= -{\rm i}\frac{\rho_1\sqrt{3(\lambda_2 + \lambda_3)+\tau^2}}{\rho_2\sqrt{\tau^2-3\lambda_2}}\frac{{\rm e}^{{\rm i}(\phi_1-\phi_2)}}{1+\tilde{h} {\rm e}^{{\rm i}(\phi_3-\phi_2)}},
\]
where
\[
 \tilde{h} =\frac{\rho_3\sqrt{\tau^2-3\lambda_3}}{\rho_2\sqrt{\tau^2-3\lambda_2}}.
\]
Since $0<\tilde{h}<1$, the image of $1+\tilde{h}{\rm e}^{{\rm i}(\phi_3-\phi_2)}$
is a curve contained in a disk of radius $<1$ centered at $(1,0)$.
Hence $1+\tilde{h}{\rm e}^{{\rm i}(\phi_3-\phi_2)}$
is null-homotopic in $\dot{\C}$.
This implies
\[
 \deg (f)=\frac{1}{2\pi} (\phi_1-\phi_2)\Big|_0^{2{\bf n}_{\gamma}\omega} = {\bf n}_{\gamma}(q_1-q_2).\tag*{\qed}
\]\renewcommand{\qed}{}
\end{proof}

Summarizing: \emph{the quantum numbers of a closed critical curve are determined by the wave number,
the CR spin, the CR turning number, and the trace}.

\section[Experimental evidence of the existence of infinite countably many closed
 critical curves of type B'\_1 and examples]{Experimental evidence of the existence of infinite countably\\ many closed
 critical curves of type $\boldsymbol{{\mathcal B}'_1}$ and examples}\label{s:heuristic}

 This section is of an experimental nature. We use numerical tools, implemented in the software \textsc{Mathematica}~13.3,
 to support the claim that there exist
 countably many closed critical curves of type ${\mathcal B}_1'$, with moduli belonging to the connected component
 ${\mathcal B}_1^-$ of ${\mathcal B}_1$ (cf.\ Remark~\ref{r:connectedcomponents}).
 The same reasoning applies, as well, if the modulus belongs to the other connected component~${\mathcal B}_1^+$ of~${\mathcal B}_1$.
 We parametrize ${\mathcal B}_1^-$ by the map $\psi_-\colon K_-\to {\mathcal B}_1^-$, defined in~\eqref{parconc},
 where $K_{-}$ is the rectangle $\hat{{\rm J}}_{\xi}^{-}\times (0,1)$, $\hat{{\rm J}}_{\xi}^{-}=(\pi/2, 2.3008)$.
 We take ${\bf p}=(p_1,p_2)\in K_-$ as the fundamental parameters.
 The modulus $\mathbf{c}=(c_1,c_2)$, the roots $e_1<e_2<0<e_3$ of the quintic polynomial, and the eigenvalues
 $\lambda_1=-(\lambda_2+\lambda_3)<0<\lambda_2<\lambda_3$ of the momentum are explicit functions of the
 parameters $(p_1,p_2)$. Let $K_-^*$ be the open set of the {general} parameters, that is,
\[
 K_-^* = \left\{{\bf p}\in K_- \mid \Delta_1( \psi_-({\bf p}))\Delta_2( \psi_-({\bf p}))\neq 0\right\}.
\]
The complete {hyperelliptic} integrals ${\mathfrak P}_j$ can be evaluated numerically as functions
of ${\bf p}\in K_-^*$.
Consider the real analytic map
${\mathfrak P}=({\mathfrak P}_1, {\mathfrak P}_3)\colon K_-^*\to \R^2$.\footnote{Actually, ${\mathfrak P}$ is real-analytic on all $K_-$. Instead, ${\mathfrak P}_2$ has a jump discontinuity at the exceptional locus.}
Choose ${\bf p}_*=(2,1/2)\in K_-^*$ and plot the graphs of the functions
 $f_{11}(p_1)={\mathfrak P}_1(p_1,1/2)$, $f_{12}(p_2)={\mathfrak P}_1(2,p_2)$,
 $f_{31}(p_1)={\mathfrak P}_3(p_1,1/2)$,
 and $f_{32}(p_2)=
{\mathfrak P}_3(2,p_2) $ (see Figures~\ref{FIG8} and~\ref{FIG9}).

\begin{figure}[t]\centering
\includegraphics[height=6cm,width=6cm]{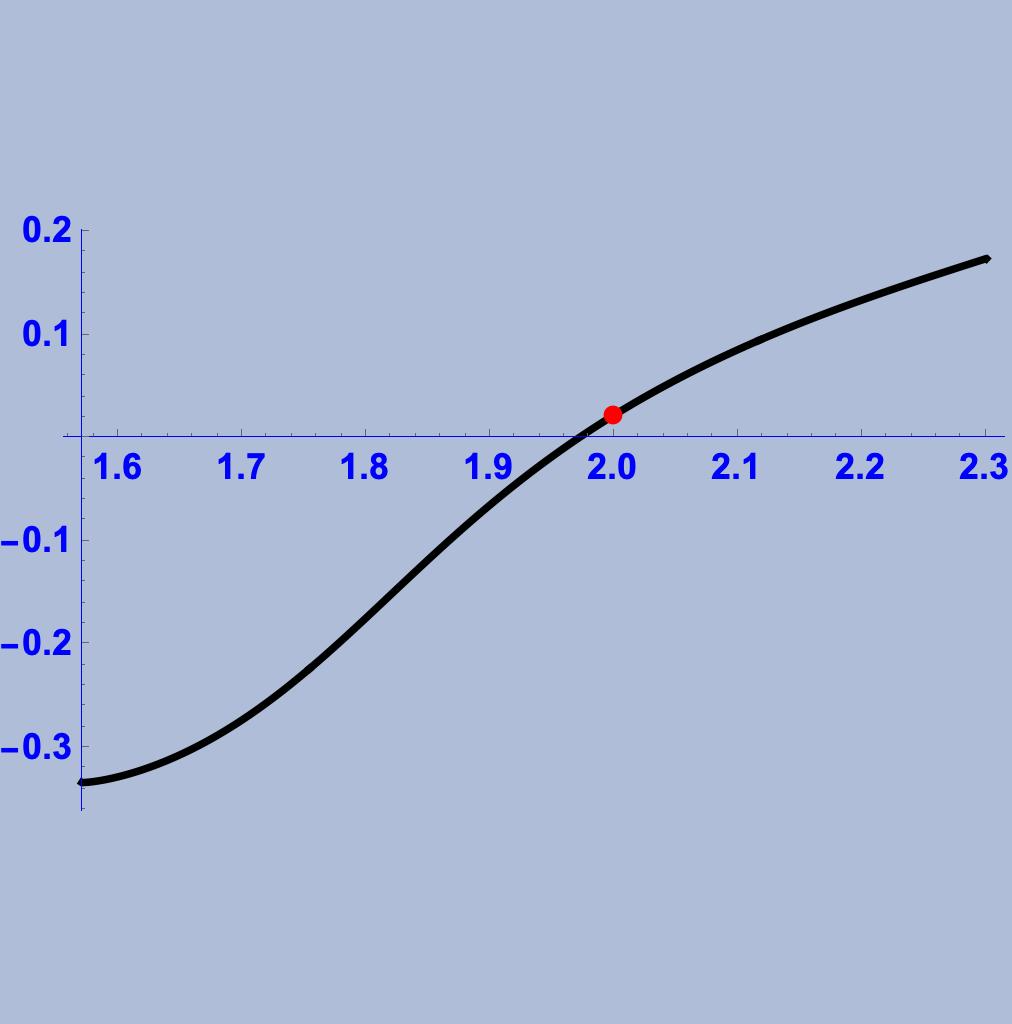}\qquad
\includegraphics[height=6cm,width=6cm]{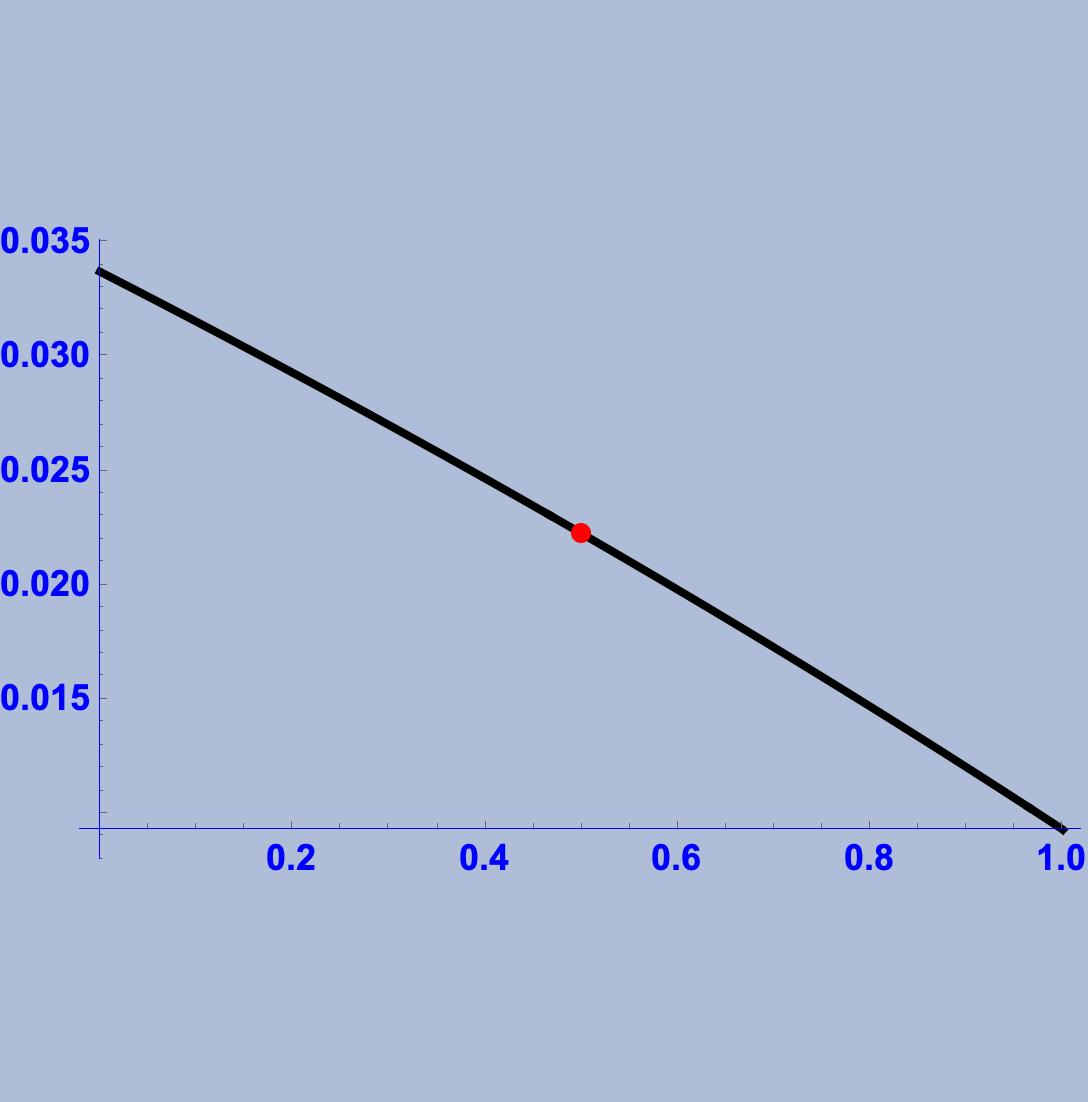}
\caption{On the left: the graph of the function $f_{11}$. On the right: the graph of the function $f_{12}$.}\label{FIG8}
\end{figure}

\begin{figure}[t]\centering
\includegraphics[height=6cm,width=6cm]{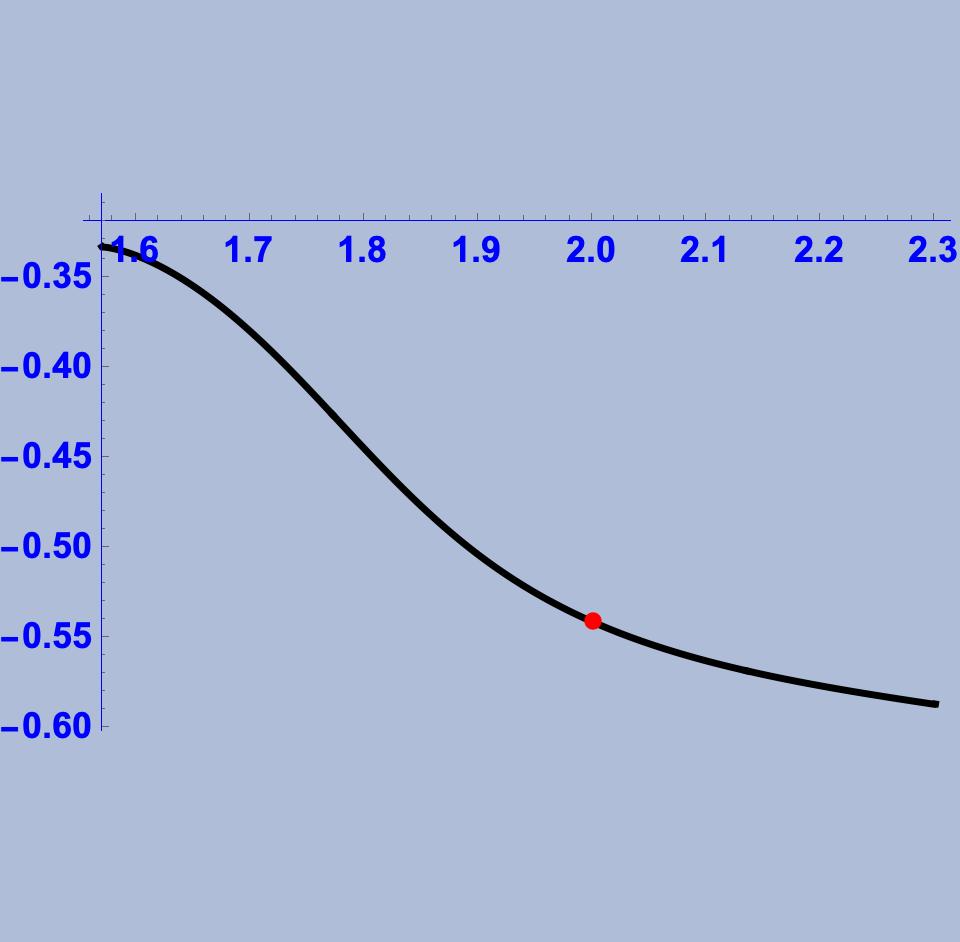}\qquad
\includegraphics[height=6cm,width=6cm]{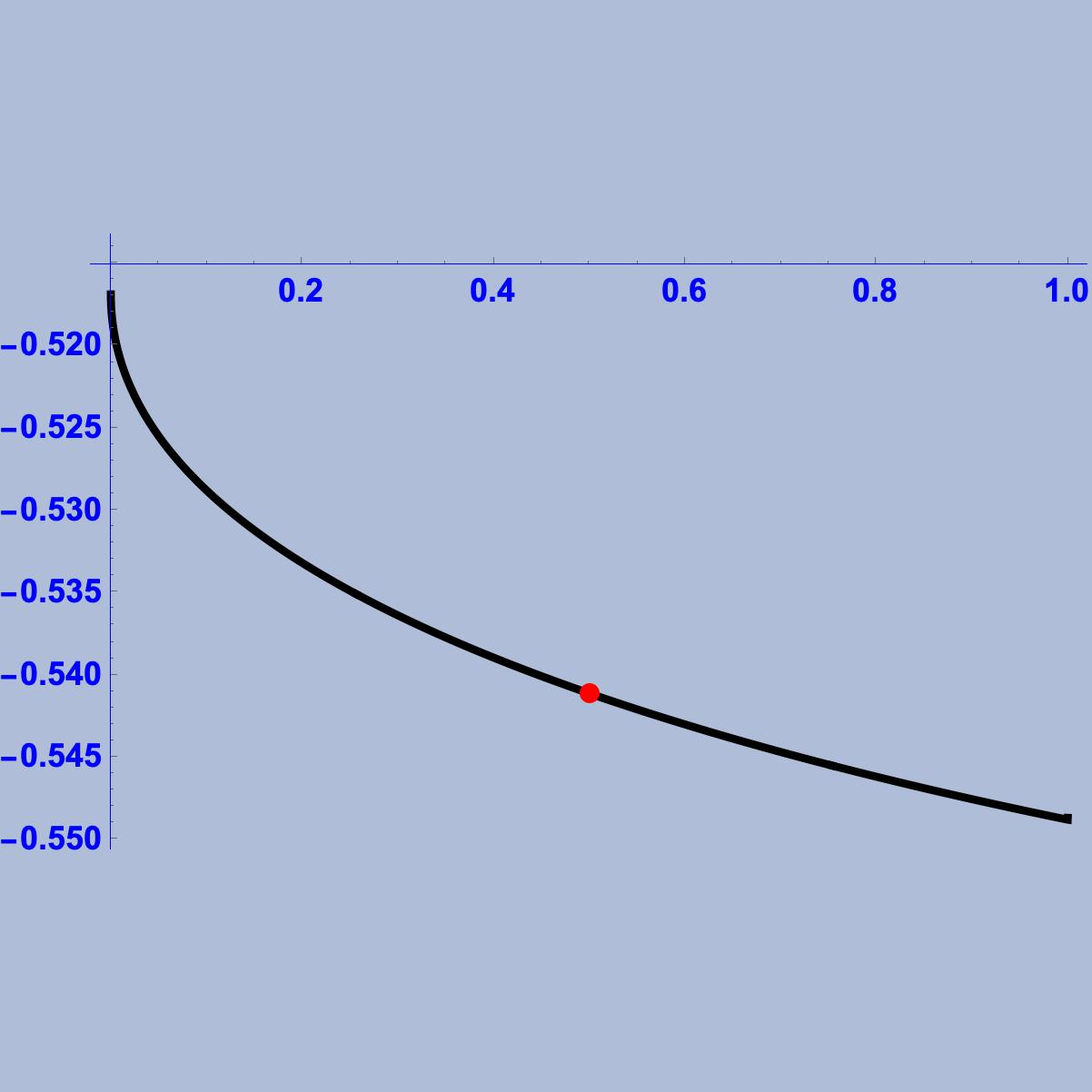}
\caption{On the left: the graph of the function $f_{31}$. On the right: the graph of the function $f_{32}$.}\label{FIG9}
\end{figure}

The function $f_{11}$ is strictly increasing, while the other three functions are strictly decreasing.
This implies that ${\mathfrak P}$ has maximal rank at ${\bf p}_*$.
Thus ${\mathcal P}_-={\mathfrak{P}}(K_-)$ is a set with non empty interior.
In particular ${\mathcal P}_-^r:={\mathcal P}_-\cap {\mathbb Q}$ is an infinite countable set
and, for every ${\bf q}=(q_1,q_2)\in {\mathcal P}_-^r$, there exists a closed critical curve of
type $\mathcal{B'}_{1}^{-}$ with quantum numbers $q_1$ and $q_2$.
 Figure~\ref{FIG10} reproduces the plot of the map ${\mathfrak P}$, an open convex set.
 The mesh supports a stronger conclusion: the map ${\mathfrak P}$ is 1-1. Therefore,
 one can assume
 that, for every rational point $(q_1,q_3)\in {\mathcal P}_-$, there exists a
 unique congruence class of closed critical curves with quantum numbers $q_1$ and $q_3$.
 The construction of a standard configuration of a critical curve associated to a rational point
 ${\bf q}\in {\mathcal P}_-$ can be done in three steps.

\begin{figure}[t]\centering
\includegraphics[height=6cm,width=9cm]{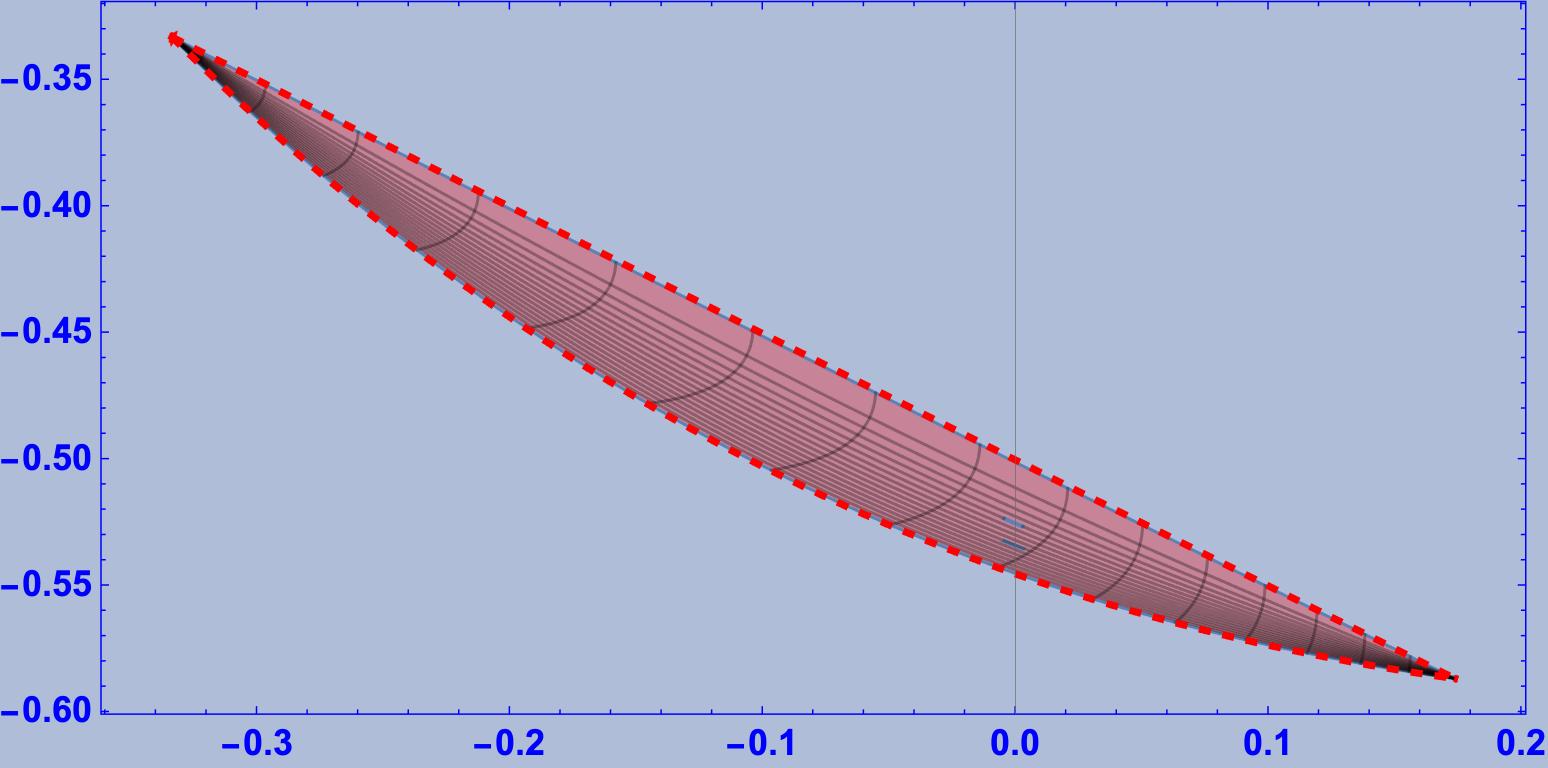}
\caption{The plot of the map ${\mathfrak P}$.}\label{FIG10}
\end{figure}

{\bf Step 1.} Choose a rational point ${\bf q}=(q_1,q_3)=(m_1/n_1,m_3/n_3)\in {\mathcal P}_-$.
To find the parameter ${\bf p}\in K_-$, such that
${\mathfrak P}({\bf p})={\bf q}$, we may proceed as follows: plot the level curves ${\mathtt X}_{q_1}={\mathfrak P}_1^{-1}(q_1)$
and ${\mathtt Y}_{q_3}={\mathfrak P}_3^{-1}(q_3)$ and choose a small rectangle ${\rm R}\subset K_-$
containing ${\mathtt X}_{q_1}\cap {\mathtt Y}_{q_2}$ (see Figure~\ref{FIG11}). Then we minimize numerically the function
\[
 \delta_{{\bf q}}\colon \ {\rm R}\ni {\bf p}\longmapsto \sqrt{({\mathfrak P}_1({\bf p})-q_1)^2+({\mathfrak P}_3({\bf p})-q_3)^2}.
\]
 We use the stochastic minimization method named ``differential evolution''~\cite{SP} implemented in \textsc{Mathematica}.

\begin{figure}[t]\centering
\includegraphics[height=6cm,width=6cm]{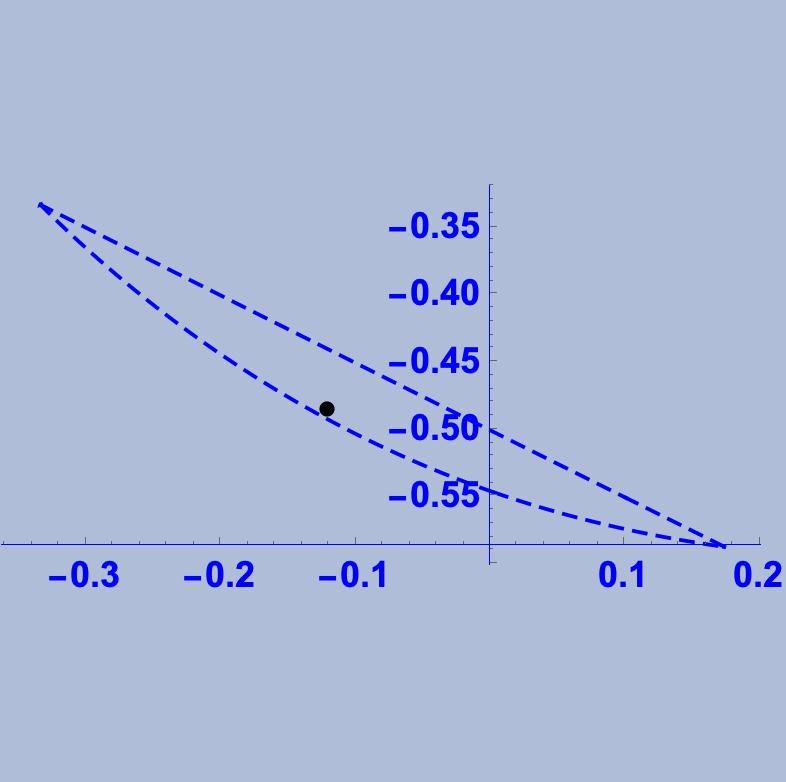}\qquad
\includegraphics[height=6cm,width=6cm]{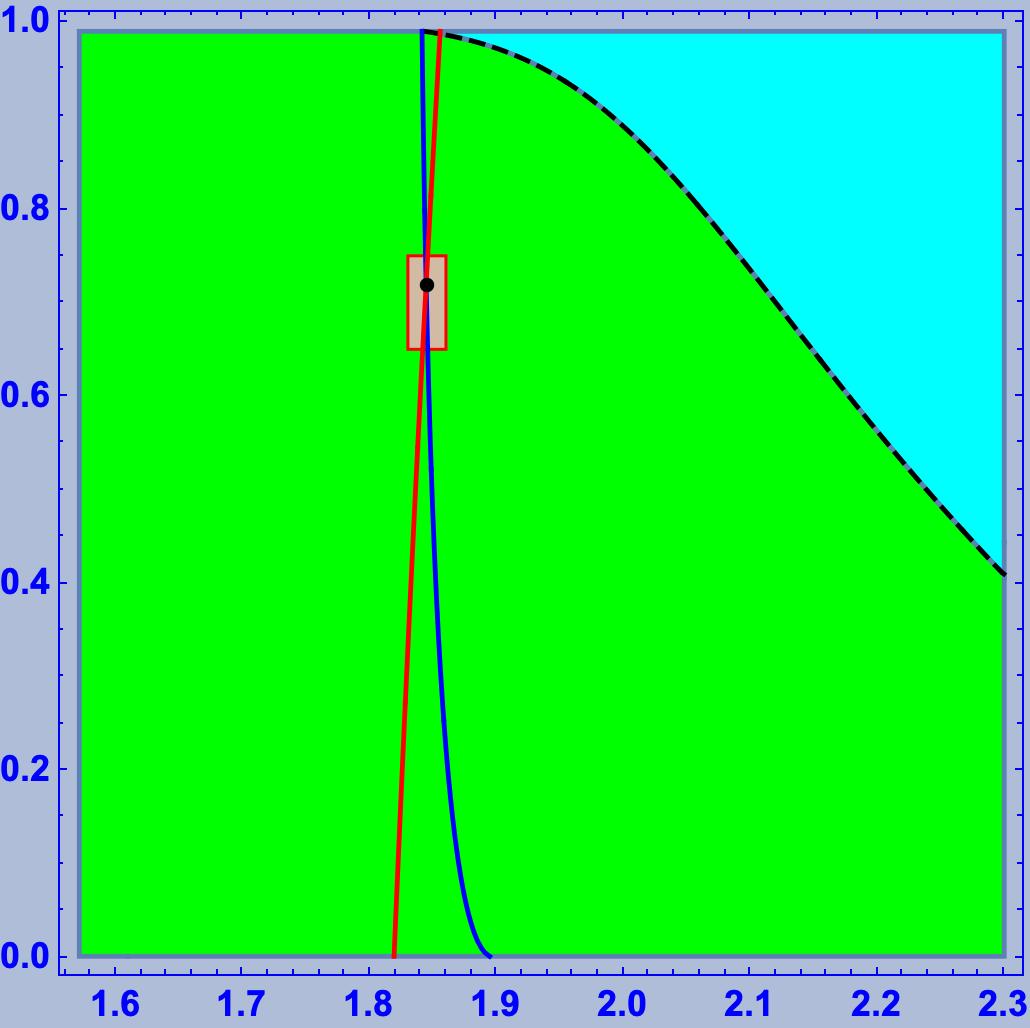}
\caption{On the left: the point ${\bf q}=(-2/15,-10/21)\in {\mathcal P}_-$. On the right: the level curves
${\mathtt X}_{-2/15}$ and ${\mathtt Y}_{-10/21}$. The dotted curve is the exceptional locus.
The green and the cyan domains are the two connected components of $K_-^*$. The brow rectangle is the one
chosen for the numerical minimization of the function $\delta_{{\bf q}}$.}\label{FIG11}
\end{figure}

\begin{Example}\label{eex1}
Let us revisit Example~\ref{ex1p1}.
Choose ${\bf q}=(-2/15,-10/21)\in {\mathcal P}_-$. The plot of the level curves ${\mathtt X}_{q_1}$ and ${\mathtt Y}_{q_3}$
is depicted in Figure~\ref{FIG11}. Minimizing $\delta_{{\bf q}}$ on the rectangle ${\rm R}=[1.83,1.86]\times [0.65,0.75]$
(depicted on the right picture in Figure~\ref{FIG11}) we obtain ${\bf p} = (1.84438,0.719473)$
and $\delta_{{\bf q}}({\bf p})=3.26867\cdot 10^{-9}$.
So, up to negligible numerical errors, we may assume
${\bf p}={\mathfrak P}^{-1}({\bf q})$. Computing $\psi_-({\bf p})$, we find the modulus ${\bf c}=(c_1,c_2)$
of the curve, where $c_1=-0.828424$ and $c_2= -8.349418$. With the modulus at hand, we compute
 the lowest real roots of the quintic polynomial, $e_1\approx -0.931924<e_2\approx -0.678034$, and the
 roots of the momentum, namely
 $\lambda_1\approx -2.40462<0<\lambda_2\approx 0.40614<\lambda_3\approx 1.99848$.
 \end{Example}

{\bf Step 2.}
We evaluate numerically the integral~\eqref{halfpB1} and we get the half-period $\omega$ of the twist of the critical curve.
In our example $\omega \approx 0.732307$. The next step is to evaluate the twist~$\tau$.
This can be done by solving numerically the Cauchy problem~\eqref{twistnB1'} on the interval $[0,2n\omega]$,
$n=\mathrm{lcm}(n_1,n_2)$.
The bending is given by $\kappa=c_1/\tau^2$. Next, we solve the Frenet type linear system~\eqref{MCE-W-frame},
with initial condition ${\mathcal F}(0)=I_3$. Then, $\widetilde{\gamma}\colon [0,2n\omega]\ni s\longmapsto [F_1(s)]\in \S$ is a critical
curve with quantum numbers $q_1$ and $q_3$ and ${\mathcal F}$ is a
Wilczynski frame
field along
{$\widetilde{\gamma}$}. However, {$\widetilde{\gamma}$}~is not in a standard configuration.

{\bf Step 3.}
The last step consists in building the standard configuration. The momentum ${\mathfrak M}$
of~$\widetilde{\gamma}$ is $L(0)$, where $L$ is as in~\eqref{obser-L}. Taking into account that $\tau(0)=e_2$,
$\tau'(0)=0$, and that $\kappa(0)=c_1/e_2^2$,
we get
\[
{\mathfrak M}=\begin{pmatrix}
 0 & 3\big(1 - \frac{c_1}{e_2}\big) & 2 {\rm i} e_2\\
 %e_2 & 0 & \tau'+ 3i\left(1 - \frac{c_1}{e_2}\right)\\
 e_2 & 0 & 3{\rm i}\big(1 - \frac{c_1}{e_2}\big)\\
 3{\rm i} & -{\rm i} e_2& 0\\
 \end{pmatrix}.
\]
The eigenspace of the highest eigenvalue is timelike (i.e., these critical curves are
{negatively}
polarized).
We compute the eigenvectors and we build a unimodular pseudo-unitary basis ${A}=({A}_1,{A}_2, {A}_3)$,
 such that ${A}_1$ is an eigenvector of $\lambda_3$, ${A}_2$ is an eigenvector of $\lambda_2$, and ${A}_3$
 is an eigenvector of $\lambda_1$.
Let ${\rm B}$ be as in~\eqref{standardpsh}. Consider ${\rm M}={\rm B}{A}^{-1}\in {G}$.
Then, {$\gamma={\rm M}\widetilde{\gamma}$} is a standard configuration of a critical curve
with quantum numbers $q_1$ and $q_3$.

\begin{Remark}
The curve $\gamma$ does not pass through the pole of the Heisenberg projection $\pi_{H}$. So,
$\widehat{\gamma}=\pi_{H}\circ \gamma$ is a closed transversal curve of $\R^3$ which does not intersect the $Oz$-axis
and ${\rm tr}_*(\gamma)=
{\rm Lk}(\widehat{\gamma},Oz)$.
\end{Remark}

\begin{figure}[t]\centering
\includegraphics[height=6cm,width=6cm]{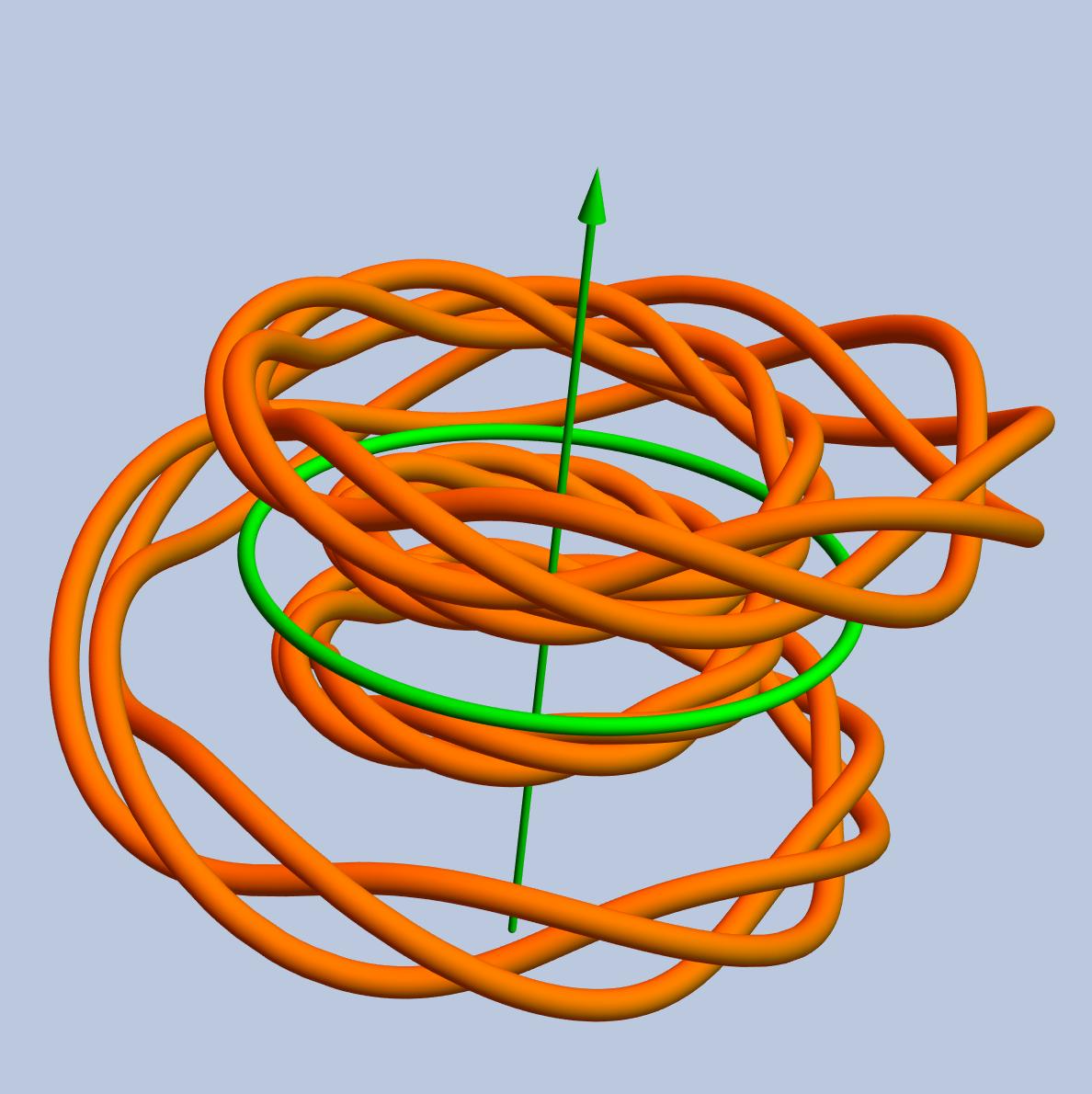}\qquad
\includegraphics[height=6cm,width=6cm]{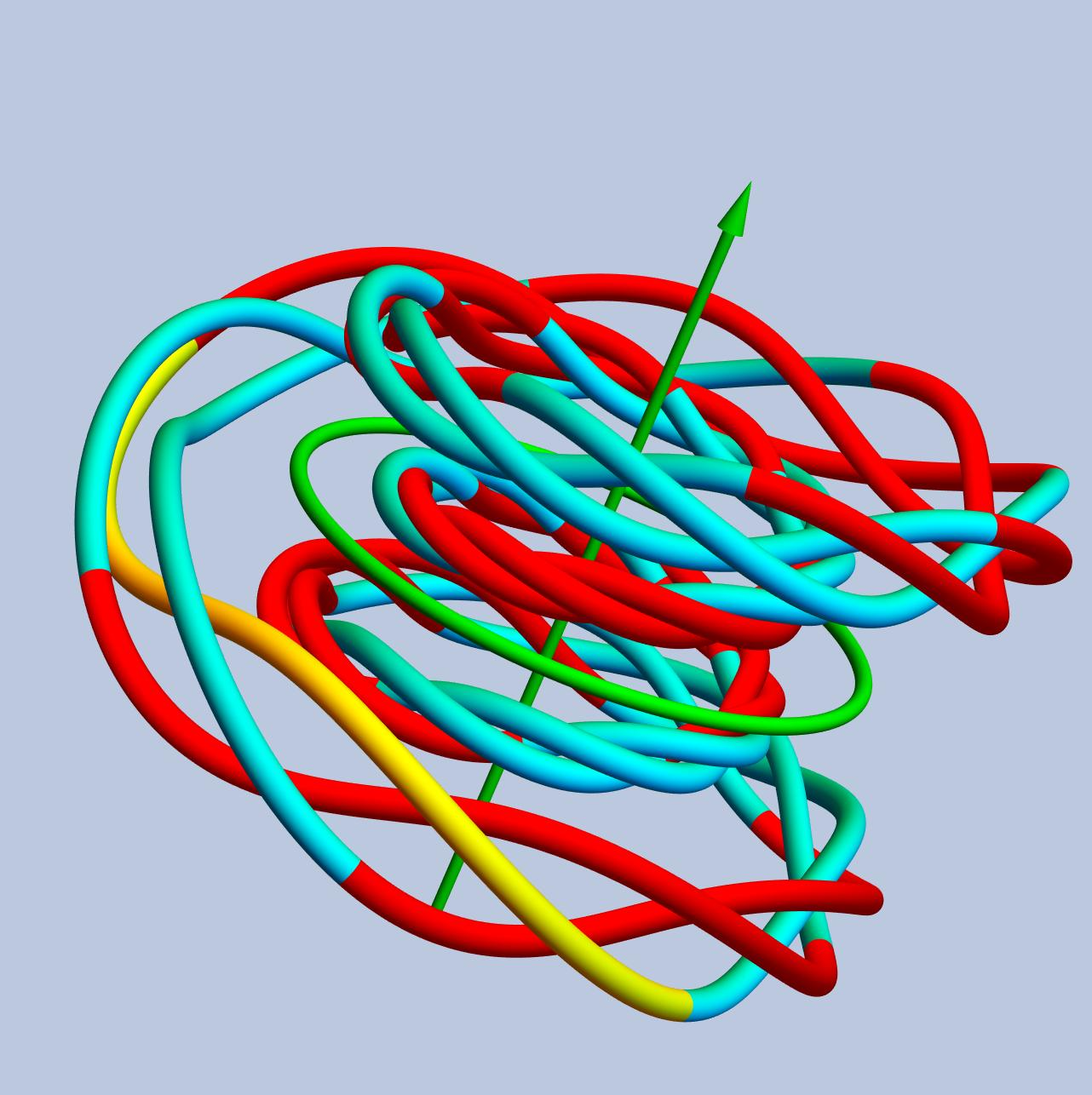}
\caption{The Heisenberg projection of a standard configuration of a critical curve with quantum numbers $q_1=-2/15$ and $q_2=-10/21$. The figure on the left reproduces the fundamental arc $\gamma([0,2\omega))$ (coloured in yellow). The curve can be constructed by acting with the monodromy on the fundamental~arc.\looseness=-1}\label{FIG12}
\end{figure}

\begin{figure}[t!]\centering
\includegraphics[height=6cm,width=6cm]{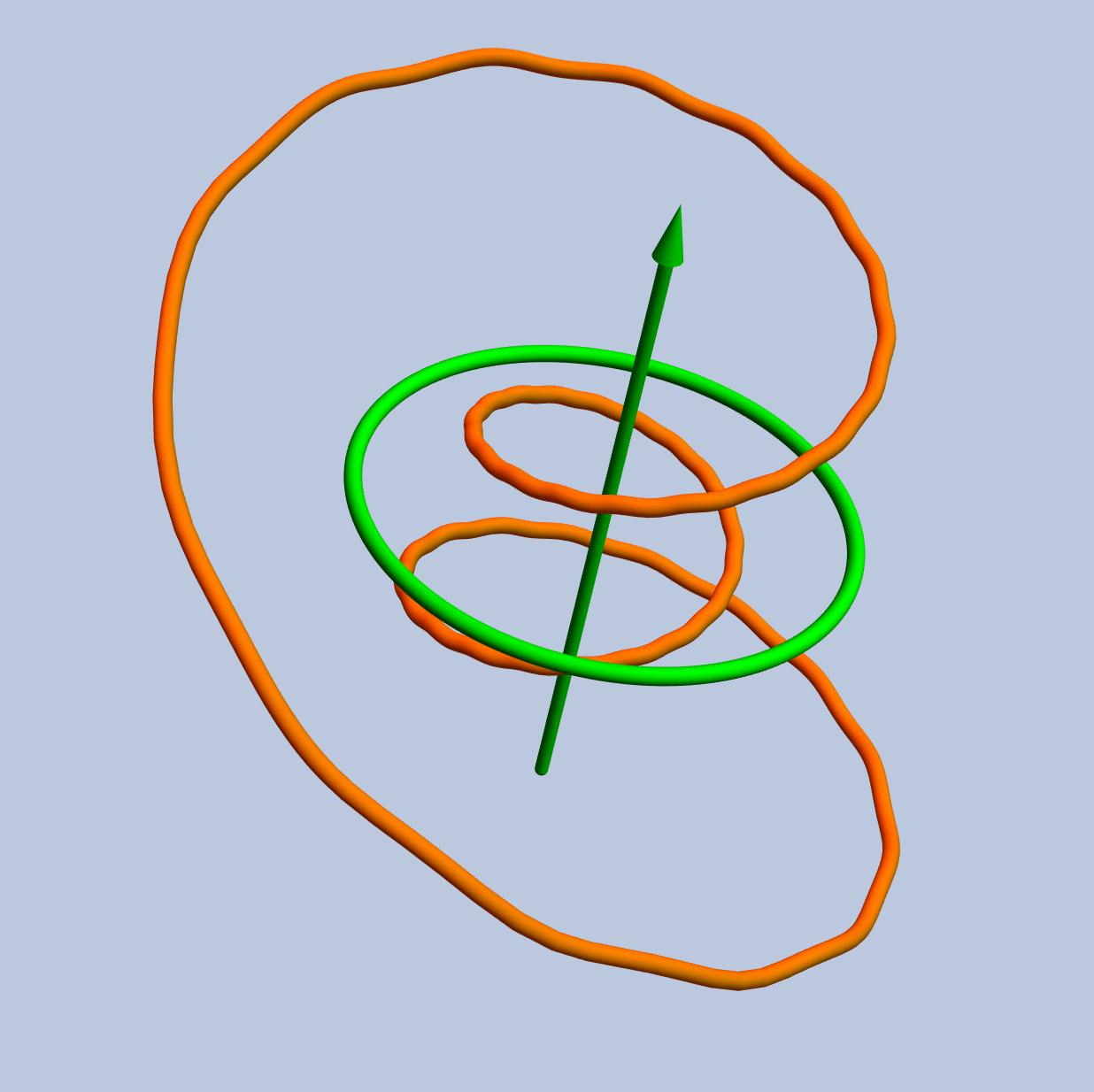}\qquad
\includegraphics[height=6cm,width=6cm]{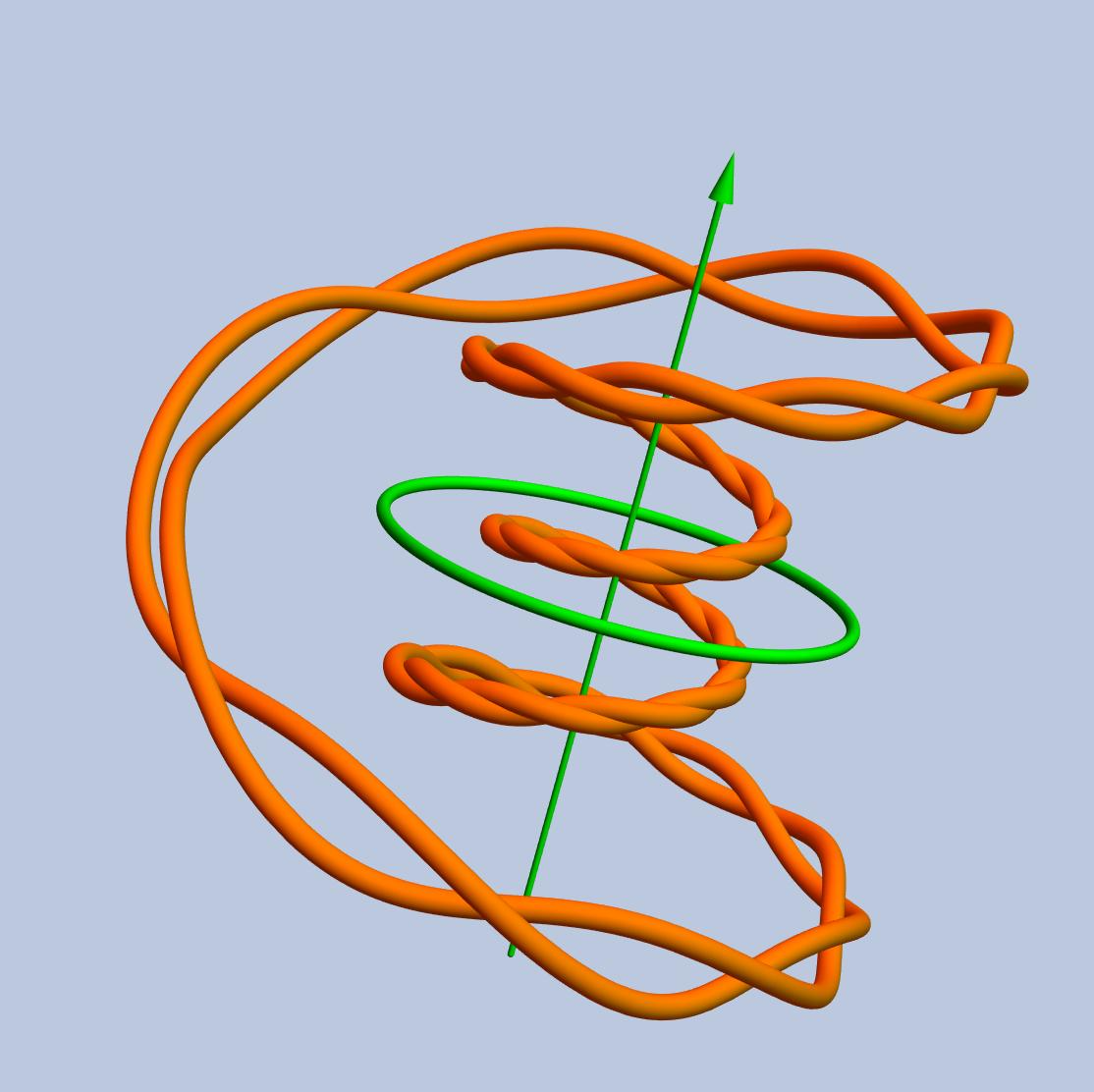}
\caption{On the left: the Heisenberg projection of a {standard} configuration of a critical
curve of type ${\mathcal B'}_{1}^{-}$ with quantum numbers $q_1=-3/10$, $q_2=-9/25$.
On the right: the Heisenberg projection of a~{standard} configuration of a
critical curve of type ${\mathcal B'}_{1}^{-}$ with quantum numbers $q_1=-1/5$, $q_2=-3/7$.}\label{FIG13}
\end{figure}

\begin{Example}
Applying Steps 2 and~3 to Example~\ref{eex1} and computing the Heisenberg projection,
we obtain the transversal curve depicted in Figure~\ref{FIG12}, a non-trivial transversal knot.
The quantum numbers are $q_1=-2/15$ and $q_2=-10/21$. Recalling what has been said about the {discrete}
invariants of a critical curve (cf.\ Section~\ref{ss:disinvcritical}), the spin is $1/3$, the wave number is $n=35$, the
CR turning number is $-50$, and the trace is $12$.
 \end{Example}

\begin{Example}
Figures~\ref{FIG13} and~\ref{FIG14} reproduce the Heisenberg projections of the
{standard} configurations of critical curves
of type ${\mathcal B'}_{1}^{-}$ with quantum numbers $(-3/10$, $-9/25)$, $(-1/5,-3/7)$, $(5/49, -4/7)$, and $(-7/36,-23/54)$,
 respectively. All of them have spin~1. The first is a trivial torus knot with wave number $n=50$, ${\rm tr}_*=3$, and
 CR turning number ${\it w}=-18$; the second example is a nontrivial transversal knot with $n=35$, ${\rm tr}_*=8$, and
 ${\it w}=-15$. The third example is a ``tangled" transversal curve with $n=49$, ${\rm tr}_*=33$, and ${\it w}=-28$.
 The last example is a~nontrivial transversal torus knot {with wave} number $108$, ${\rm tr}_*=25$, and ${\it w}=-46$.

\begin{figure}[t]\centering
\includegraphics[height=6cm,width=6cm]{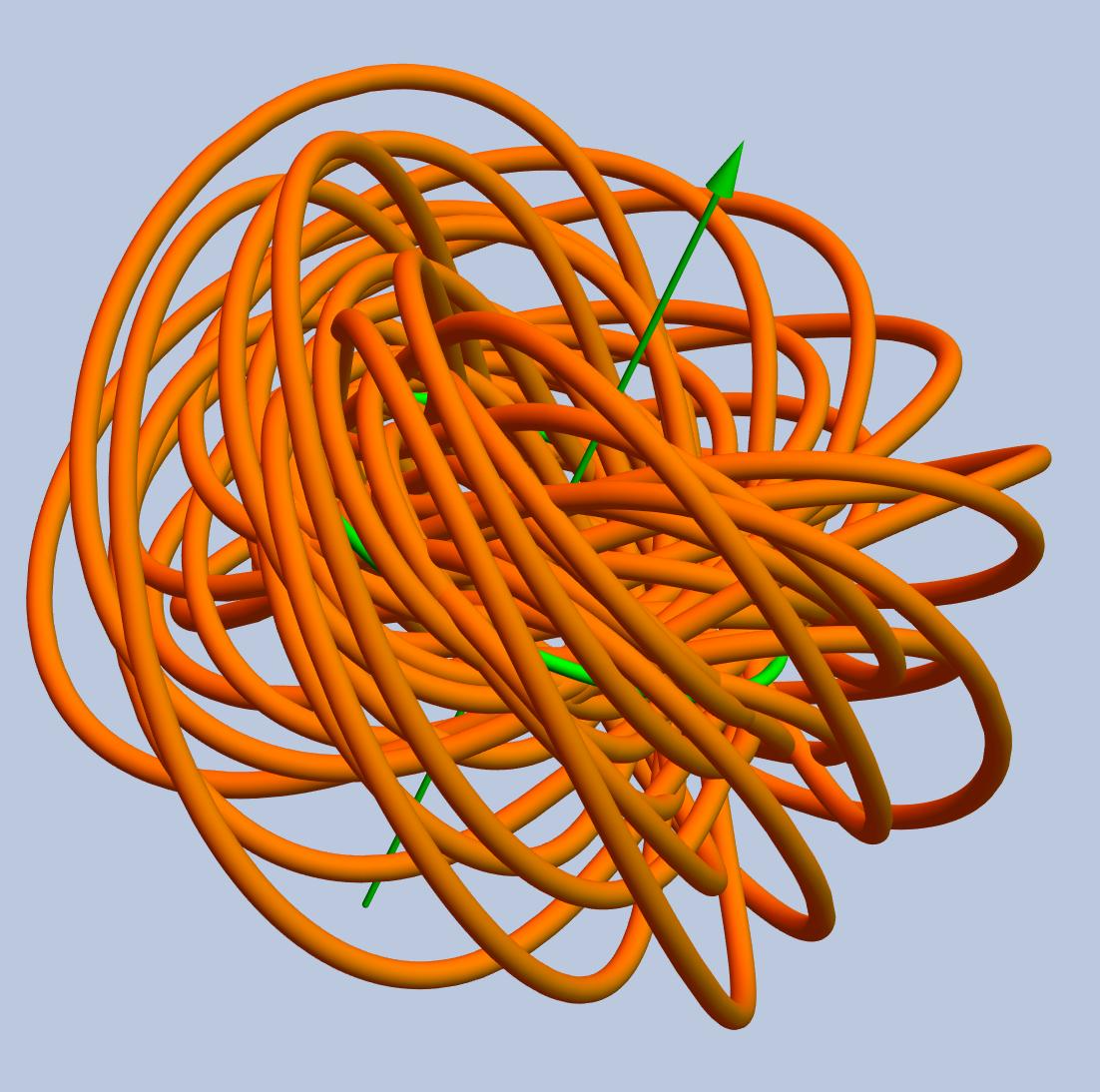}\qquad
\includegraphics[height=6cm,width=6cm]{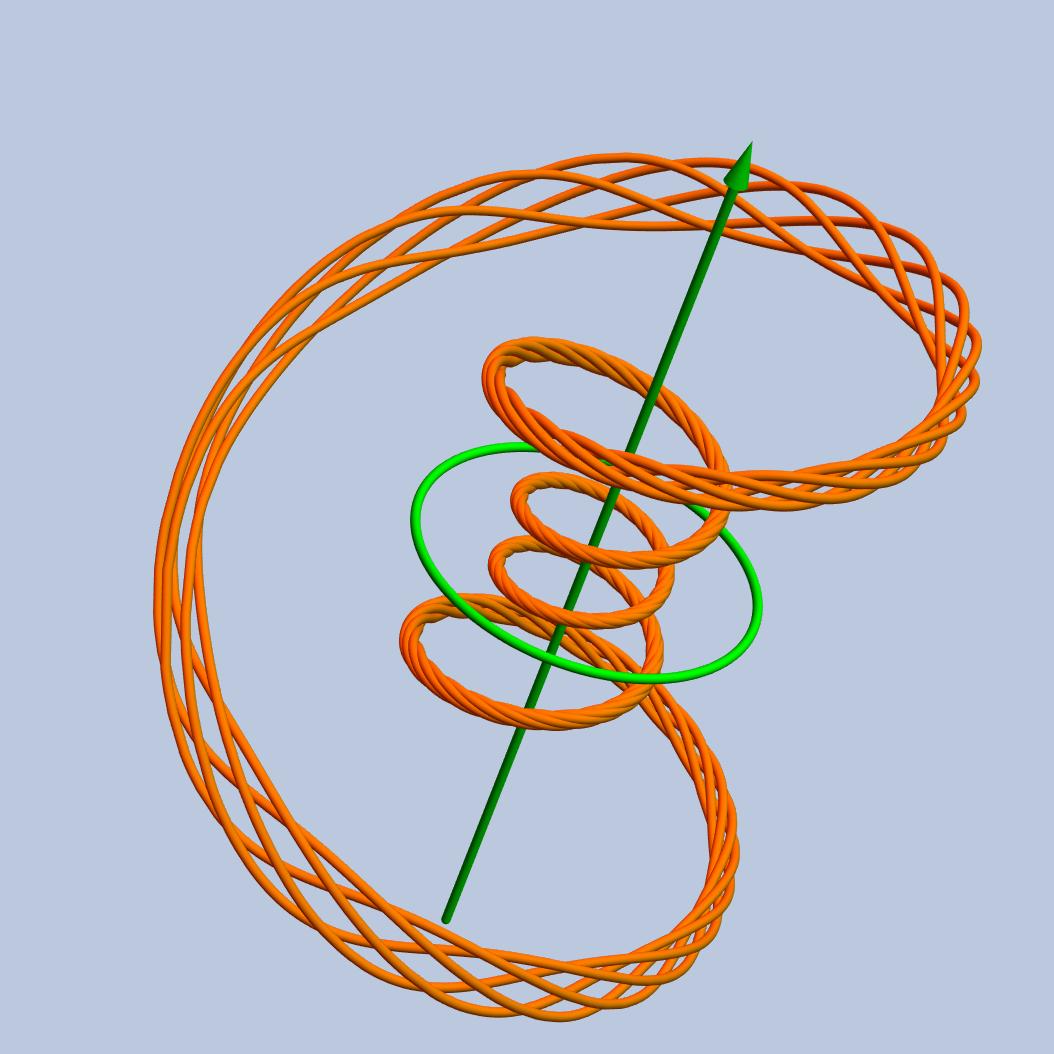}
\caption{On the left: the Heisenberg projection of a {standard}
configuration of a critical curve of type~${\mathcal B'}_{1}^{-}$,
with quantum numbers $q_1=5/49$, $q_2=-4/7$. On the right: the Heisenberg projection of a~standard
configuration of a critical
curve of type~${\mathcal B'}_{1}^{-}$, with quantum numbers $q_1=-7/36$, $q_2=-23/54$.}\label{FIG14}
\end{figure}
 \end{Example}

\begin{Remark}
It is clear that, being numerical approximations, the parametrizations obtained with this procedure are
only approximately periodic.
\end{Remark}

\subsection*{Acknowledgements}

The authors were partially supported by PRIN 2017 ``Real and Complex Manifolds: Topology, Geometry and holomorphic dynamics''
(protocollo 2017JZ2SW5-004); and by the GNSAGA of INdAM.
The present research was also partially supported by MIUR grant ``Dipartimenti di Eccellenza'' 2018-2022,
CUP: E11G18000350001, DISMA, Politecnico di Torino.
The authors gratefully acknowledge the referees for their helpful comments and suggestions.

\pdfbookmark[1]{References}{ref}
\LastPageEnding


\begin{thebibliography}{99}
\footnotesize\itemsep=0pt

\bibitem{Benn1983}
Bennequin D., Entrelacements et \'equations de {P}faff, \textit{Ast\'erisque}
 \textbf{107} (1983), 87--161.

\bibitem{Bryant1987}
Bryant R.L., On notions of equivalence of variational problems with one
 independent variable, in Differential {G}eometry: the {I}nterface {B}etween
 {P}ure and {A}pplied {M}athematics ({S}an {A}ntonio, {T}ex., 1986),
 \textit{Contemp. Math.}, Vol.~68, \href{https://doi.org/10.1090/conm/068/924805}{American Mathematical Society}, Providence,
 RI, 1987, 65--76.

\bibitem{COST}
Calabi E., Olver P.J., Shakiban C., Tannenbaum A., Haker S., Differential and
 numerically invariant signature curves applied to object recognition,
 \href{https://doi.org/10.1023/A:1007992709392}{\textit{Int.~J.~Comput. Vis.}} \textbf{26} (1998), 107--135.

\bibitem{CI}
Calini A., Ivey T., Integrable geometric flows for curves in pseudoconformal
 {$S^3$}, \href{https://doi.org/10.1016/j.geomphys.2021.104249}{\textit{J.~Geom. Phys.}} \textbf{166} (2021), 104249, 17~pages.

\bibitem{Cartan1932-2}
Cartan E., Sur la g\'eom\'etrie pseudo-conforme des hypersurfaces de l'espace
 de deux variables complexes~{II}, \textit{Ann. Scuola Norm. Super. Pisa Cl.
 Sci.~(2)} \textbf{1} (1932), 333--354.

\bibitem{Cartan1932}
Cartan E., Sur la g\'eom\'etrie pseudo-conforme des hypersurfaces de l'espace
 de deux variables complexes, \href{https://doi.org/10.1007/BF02417822}{\textit{Ann. Mat. Pura Appl.}} \textbf{11}
 (1933), 17--90.

\bibitem{ChMo1974}
Chern S.S., Moser J.K., Real hypersurfaces in complex manifolds, \href{https://doi.org/10.1007/BF02392146}{\textit{Acta
 Math.}} \textbf{133} (1974), 219--271.

\bibitem{Chiu-Ho2019}
Chiu H.-L., Ho P.T., Global differential geometry of curves in three-dimensional
 {H}eisenberg group and {CR} sphere, \href{https://doi.org/10.1007/s12220-018-00122-x}{\textit{J.~Geom. Anal.}} \textbf{29}
 (2019), 3438--3469.

\bibitem{DMN}
Dzhalilov A., Musso E., Nicolodi L., Conformal geometry of timelike curves in
 the {$(1+2)$}-{E}instein universe, \href{https://doi.org/10.1016/j.na.2016.05.011}{\textit{Nonlinear Anal.}} \textbf{143}
 (2016), 224--255, \href{https://arxiv.org/abs/1603.01035}{arXiv:1603.01035}.

\bibitem{Eliash1993}
Eliashberg Y., Legendrian and transversal knots in tight contact
 {$3$}-manifolds, in Topological {M}ethods in {M}odern {M}athematics ({S}tony
 {B}rook, {NY}, 1991), Publish or Perish, Houston, TX, 1993, 171--193.

\bibitem{EMN-JMAA}
Eshkobilov O., Musso E., Nicolodi L., The geometry of conformal timelike
 geodesics in the {E}instein universe, \href{https://doi.org/10.1016/j.jmaa.2020.124730}{\textit{J.~Math. Anal. Appl.}}
 \textbf{495} (2021), 124730, 32~pages.

\bibitem{Et3}
Etnyre J.B., Introductory lectures on contact geometry,
in Topology and {G}eometry of {M}anifolds ({A}thens, {GA}, 2001),
\textit{Proc. Sympos. Pure Math.}, Vol.~71, \href{https://doi.org/10.1090/pspum/071/2024631}{American Mathematical Society}, Providence, RI, 2003, 81--107, \href{https://arxiv.org/abs/math.SG/0111118}{arXiv:math.SG/0111118}.

\bibitem{Etn1999}
Etnyre J.B., Transversal torus knots, \href{https://doi.org/10.2140/gt.1999.3.253}{\textit{Geom. Topol.}} \textbf{3} (1999),
 253--268, \href{https://arxiv.org/abs/math.GT/9906195}{arXiv:math.GT/9906195}.

\bibitem{Et2}
Etnyre J.B., Legendrian and transversal knots, in Handbook of {K}not {T}heory,
 \href{https://doi.org/10.1016/B978-044451452-3/50004-6}{Elsevier}, Amsterdam, 2005, 105--185, \href{https://arxiv.org/abs/math.SG/0306256}{arXiv:math.SG/0306256}.

\bibitem{EtHo}
Etnyre J.B., Honda K., Knots and contact geometry~{I}: {T}orus knots and the
 figure eight knot, \href{https://doi.org/10.4310/JSG.2001.v1.n1.a3}{\textit{J.~Symplectic Geom.}} \textbf{1} (2001), 63--120,
 \href{https://arxiv.org/abs/math.GT/0006112}{arXiv:math.GT/0006112}.

\bibitem{FelsOlver1}
Fels M., Olver P.J., Moving coframes:~{I}. {A}~practical algorithm,
 \href{https://doi.org/10.1023/A:1005878210297}{\textit{Acta Appl. Math.}} \textbf{51} (1998), 161--213.

\bibitem{FelsOlver2}
Fels M., Olver P.J., Moving coframes:~{II}. {R}egularization and theoretical
 foundations, \href{https://doi.org/10.1023/A:1006195823000}{\textit{Acta Appl. Math.}} \textbf{55} (1999), 127--208.

\bibitem{FuTa1997}
Fuchs D., Tabachnikov S., Invariants of {L}egendrian and transverse knots in
 the standard contact space, \href{https://doi.org/10.1016/S0040-9383(96)00035-3}{\textit{Topology}} \textbf{36} (1997), 1025--1053.

\bibitem{Gr}
Griffiths P.A., Exterior differential systems and the calculus of variations,
 \textit{Progr. Math.}, Vol.~25, \href{https://doi.org/10.1007/978-1-4615-8166-6}{Birkh\"auser}, Boston, MA, 1983.

\bibitem{Ho}
Hoffman W.C., The visual cortex is a contact bundle, \href{https://doi.org/10.1016/0096-3003(89)90091-X}{\textit{Appl. Math.
 Comput.}} \textbf{32} (1989), 137--167.

\bibitem{Hsu}
Hsu L., Calculus of variations via the {G}riffiths formalism,
 \href{https://doi.org/10.4310/jdg/1214453181}{\textit{J.~Differential Geom.}} \textbf{36} (1992), 551--589.

\bibitem{Jacobo1985}
Jacobowitz H., Chains in {CR} geometry, \href{https://doi.org/10.4310/jdg/1214439561}{\textit{J.~Differential Geom.}}
 \textbf{21} (1985), 163--194.

\bibitem{K}
Klein F., Vorlesungen \"uber das {I}kosaeder und die {A}ufl\"osung der
 {G}leichungen vom f\"unften {G}rade, Birkh\"auser, Basel, 1993.

\bibitem{KRV}
Kogan I.A., Ruddy M., Vinzant C., Differential signatures of algebraic curves,
 \href{https://doi.org/10.1137/19M1242859}{\textit{SIAM~J. Appl. Algebra Geom.}} \textbf{4} (2020), 185--226,
 \href{https://arxiv.org/abs/1812.11388}{arXiv:1812.11388}.

\bibitem{M}
Musso E., Liouville integrability of a variational problem for {L}egendrian
 curves in the three-dimensional sphere, in Selected {T}opics in
 {C}auchy--{R}iemann {G}eometry, \textit{Quad. Mat.}, Vol.~9, Seconda
 Universit\`a di Napoli, Caserta, 2001, 281--306.

\bibitem{GM}
Musso E., Grant J.D.E., Coisotropic variational problems, \href{https://doi.org/10.1016/j.geomphys.2003.10.005}{\textit{J.~Geom.
 Phys.}} \textbf{50} (2004), 303--338, \href{https://arxiv.org/abs/math.DG/0307216}{arXiv:math.DG/0307216}.

\bibitem{MN-CQG}
Musso E., Nicolodi L., Closed trajectories of a particle model on null curves
 in anti-de {S}itter 3-space, \href{https://doi.org/10.1088/0264-9381/24/22/005}{\textit{Classical Quantum Gravity}} \textbf{24}
 (2007), 5401--5411, \href{https://arxiv.org/abs/0709.2017}{arXiv:0709.2017}.

\bibitem{MN-SIAM}
Musso E., Nicolodi L., Reduction for constrained variational problems on
 3-dimensional null curves, \href{https://doi.org/10.1137/070686470}{\textit{SIAM~J. Control Optim.}} \textbf{47}
 (2008), 1399--1414.

\bibitem{MNJMIV}
Musso E., Nicolodi L., Invariant signatures of closed planar curves,
 \href{https://doi.org/10.1007/s10851-009-0155-0}{\textit{J.~Math. Imaging Vision}} \textbf{35} (2009), 68--85.

\bibitem{MN-CAG}
Musso E., Nicolodi L., Quantization of the conformal arclength functional on
 space curves, \href{https://doi.org/10.4310/CAG.2017.v25.n1.a7}{\textit{Comm. Anal. Geom.}} \textbf{25} (2017), 209--242,
 \href{https://arxiv.org/abs/1501.04101}{arXiv:1501.04101}.

\bibitem{MNS-Kharkiv}
Musso E., Nicolodi L., Salis F., On the {C}auchy--{R}iemann geometry of
 transversal curves in the 3-sphere, \href{https://doi.org/10.15407/mag16.03.312}{\textit{J.~Math. Phys. Anal. Geom.}}
 \textbf{16} (2020), 312--363, \href{https://arxiv.org/abs/2004.11350}{arXiv:2004.11350}.

\bibitem{MS}
Musso E., Salis F., The {C}auchy--{R}iemann strain functional for {L}egendrian
 curves in the 3-sphere, \href{https://doi.org/10.1007/s10231-020-00974-7}{\textit{Ann. Mat. Pura Appl.}} \textbf{199} (2020),
 2395--2434, \href{https://arxiv.org/abs/2003.01713}{arXiv:2003.01713}.

\bibitem{Na}
Nash O., On {K}lein's icosahedral solution of the quintic, \href{https://doi.org/10.1016/j.exmath.2013.09.003}{\textit{Expo. Math.}}
 \textbf{32} (2014), 99--120, \href{https://arxiv.org/abs/1308.0955}{arXiv:1308.0955}.

\bibitem{Olver-book1}
Olver P.J., Applications of {L}ie groups to differential equations,
 \textit{Grad. Texts in Math.}, Vol. 107, \href{https://doi.org/10.1007/978-1-4612-4350-2}{Springer}, New York, 1993.

\bibitem{Olver-book2}
Olver P.J., Equivalence, invariants, and symmetry, \href{https://doi.org/10.1017/CBO9780511609565}{Cambridge University Press},
 Cambridge, 1995.

\bibitem{Pe}
Petitot J., Elements of neurogeometry, \textit{Lect. Notes Morphog.}, \href{https://doi.org/10.1007/978-3-319-65591-8}{Springer}, Cham,
 2017.

\bibitem{SP}
Storn R., Price K., Differential evolution~-- {A} simple and efficient
 heuristic for global optimization over continuous spaces, \href{https://doi.org/10.1023/A:1008202821328}{\textit{J.~Global
 Optim.}} \textbf{11} (1997), 341--359.

\bibitem{Tr}
Trott M., Adamchik V., Solving the quintic with \textsc{Mathematica}, available
 at \url{https://library.wolfram.com/infocenter/TechNotes/158/}.

\end{thebibliography}
\end{document}